\documentclass[12pt]{amsart}

\usepackage{aliascnt,amsmath,amssymb,amsthm,enumerate,mathrsfs,mathtools,xcolor,bm}
\usepackage{tikz}

\usepackage[T1]{fontenc}

\usepackage[marginparwidth=0pt,margin=24truemm]{geometry}

\definecolor{mylinkcolor}{RGB}{16, 156, 81}
\definecolor{mycitecolor}{RGB}{20, 80, 140}

\usepackage{hyperref}
\usepackage[nameinlink]{cleveref}
\hypersetup{
    setpagesize=false,
    bookmarksnumbered=true,
    bookmarksopen=true,
    colorlinks=true,
    linkcolor=mylinkcolor,
    citecolor=mycitecolor,
}
\usepackage{autonum}
\numberwithin{equation}{section}

\theoremstyle{plain}

\newtheorem{thm}{Theorem}[section]
\crefname{thm}{Theorem}{Theorems}

\newaliascnt{lem}{thm}
\newtheorem{lem}[lem]{Lemma}
\aliascntresetthe{lem}
\crefname{lem}{Lemma}{Lemmas}

\newaliascnt{prop}{thm}
\newtheorem{prop}[prop]{Proposition}
\aliascntresetthe{prop}
\crefname{prop}{Proposition}{Propositions}

\newaliascnt{cor}{thm}
\newtheorem{cor}[cor]{Corollary}
\aliascntresetthe{cor}
\crefname{cor}{Corollary}{Corollaries}

\newaliascnt{conj}{thm}

\aliascntresetthe{conj}
\crefname{conj}{Conjecture}{Conjectures}

\newaliascnt{rem}{thm}
\newtheorem{rem}[rem]{Remark}
\aliascntresetthe{rem}
\crefname{rem}{Remark}{Remarks}

\newtheorem{mainthm}{Main Theorem}

\crefname{mainthm}{Main Theorem}{Main Theorems}

\allowdisplaybreaks[2]
\everymath{\displaystyle}

\newcommand{\summ}[1]{\sum_{\substack{#1}}}

\newcommand{\QQ}{\mathbb{Q}}
\newcommand{\ZZ}{\mathbb{Z}}
\newcommand{\RR}{\mathbb{R}}
\newcommand{\CC}{\mathbb{C}}
\newcommand{\HH}{\mathbb{H}}

\DeclareMathOperator{\Span}{span}
\DeclareMathOperator{\SL}{SL}
\DeclareMathOperator{\wt}{wt}

\newcommand{\sltwo}{\mathfrak{sl}_2}
\newcommand{\mes}{\mathcal{E}}
\newcommand{\mesadm}{\mathcal{E}^{\mathrm{adm}}}
\newcommand{\ha}{\mathfrak{H}}
\newcommand{\hadm}{\mathfrak{H}^{\mathrm{adm}}}
\newcommand{\ordN}{\succ_N}

\newcommand{\lengthfun}[1]{\mathcal{L}(#1)}

\newcommand{\qmf}{\widetilde{\mathcal{M}}}

\newcommand{\kk}{{\bm{k}}}
\newcommand{\LL}{\mathcal{L}}
\newcommand{\ind}{\mathbf{1}}
\newcommand{\latzero}{\Lambda_\tau\setminus\{0\}}
\newcommand{\equivdef}{\xLeftrightarrow{\mathrm{def}}}
\newcommand{\equiviff}{\xLeftrightarrow{\mathrm{iff}}}
\newcommand{\relmid}{\mathrel{}\middle|\mathrel{}}

\title[The $\mathfrak{sl}_2$-algebra structure of multiple Eisenstein series]
{The $\mathfrak{sl}_2$-algebra structure of multiple Eisenstein series}

\author{Henrik Bachmann}
\address{Graduate School of Mathematics, Nagoya University, Nagoya, Japan.}
\email{henrik.bachmann@math.nagoya-u.ac.jp}

\author{Hayato Kanno}
\address{Mathematical Institute, Tohoku University, Sendai, Japan.}
\email{hayato.kanno.q1@dc.tohoku.ac.jp}

\date{\today}

\subjclass[2020]{Primary 11F11, 11M32 and Secondary 13N15}
\keywords{multiple Eisenstein series, multiple zeta values, derivations, $\mathfrak{sl}_2$-algebras}

\begin{document}

\begin{abstract}
We prove that the algebra of multiple Eisenstein series is an $\mathfrak{sl}_2$-algebra. In particular, we show that it is graded by weight, which also holds after taking complex linear spans. Further, we identify it as a graded $\QQ$-algebra with the associated graded algebra of $q$-analogues of multiple zeta values with respect to the weight filtration. The main ingredient is an estimate that compares the usual defining sums of multiple Eisenstein series with sums over a lattice order depending on an integer $N$.
\end{abstract}

\maketitle

\section{Introduction}

In this paper, we show that the algebra of multiple Eisenstein series is an $\sltwo$-algebra, which was conjectured in \cite{BIM}. Multiple Eisenstein series were introduced by Gangl, Kaneko and Zagier in \cite{GKZ} (see also \cite{Ba1}) and they are holomorphic functions on the upper half-plane which connect multiple zeta values and modular forms.

For integers $r\geq1$, $k_1\geq2$ and $k_2,\dots,k_r\geq1$, the \emph{multiple zeta value} is defined by
\begin{align}
\zeta(k_1,\dots,k_r)
\coloneqq
\sum_{n_1>\cdots>n_r>0}
\frac{1}{n_1^{k_1}\cdots n_r^{k_r}}\,.
\end{align}
Its weight is $k_1+\cdots+k_r$ and if $\mathcal{Z}_k$ denotes the $\QQ$-span of all multiple zeta values of weight $k$, with $\mathcal{Z}_0=\QQ$, then it is conjectured that
\begin{align}
\mathcal{Z}\stackrel{?}{=}\bigoplus_{k\geq0}\mathcal{Z}_k\,,
\end{align}
where $\mathcal{Z}$ is the $\QQ$-algebra of multiple zeta values, i.e. conjecturally there are no linear relations among multiple zeta values of different weights. For multiple zeta values this seems to be out of reach with current methods.

For an integer $r\geq1$ and $k_1,\dots,k_r\geq2$, the \emph{multiple Eisenstein series} is defined by
\begin{align}\label{eq:def-mes}
G_{k_1,\dots,k_r}(\tau)
\coloneqq
\lim_{M\to\infty}\lim_{N\to\infty}
\summ{\lambda_1,\dots,\lambda_r\in\ZZ_M\tau+\ZZ_N\\
\lambda_1\succ\cdots\succ\lambda_r\succ0}
\frac{1}{\lambda_1^{k_1}\cdots\lambda_r^{k_r}}\,,
\end{align}
where $\tau\in\HH$, $\ZZ_M=\{m\in\ZZ\mid |m|<M\}$, and the order on $\ZZ\tau+\ZZ$ is given by
\begin{align}\label{eq:mes order}
m_1\tau+n_1\succ m_2\tau+n_2
\quad\Longleftrightarrow\quad
m_1>m_2\quad\text{or}\quad
(m_1=m_2\ \text{and}\ n_1>n_2)\,.
\end{align}
For $k_1\geq3$ the sum in \eqref{eq:def-mes} converges absolutely and the iterated limit is just needed for the case $k_1=2$. The constant term in the Fourier expansion of $G_{k_1,\dots,k_r}$ is given by the multiple zeta value $\zeta(k_1,\dots,k_r)$.

We denote the $\QQ$-linear space spanned by all multiple Eisenstein series by
\begin{align}
\mes\coloneqq
\Span_\QQ\{G_{k_1,\dots,k_r}(\tau),G_\varnothing(\tau)
\mid r\geq1,\ k_1,\dots,k_r\geq2\}\,,
\end{align}
where $G_\varnothing(\tau)=1$. Since multiple Eisenstein series satisfy the harmonic product formula, which follows directly from their definition, the space $\mes$ is a $\QQ$-algebra. For $k\geq1$, we put
\begin{align}
\mes_k\coloneqq
\Span_\QQ\{G_{k_1,\dots,k_r}(\tau)\mid
k_1,\dots,k_r\geq2,\ k_1+\cdots+k_r=k\},
\qquad
\mes_0\coloneqq\QQ\,.
\end{align}
We also consider the complex linear spans
\begin{align}\label{eq:def-complex-E}
\mes_k^\CC
&\coloneqq
\Span_\CC\{G_{k_1,\dots,k_r}(\tau)\mid
k_1,\dots,k_r\geq2,\ k_1+\cdots+k_r=k\},
\\
\mes^\CC
&\coloneqq
\Span_\CC(\mes)
=\sum_{k\geq0}\mes_k^\CC
\subset\mathcal{O}(\HH)\,,
\end{align}
where $\mes_0^\CC=\CC$. Notice that $\mes^\CC$ is defined as a space of holomorphic functions and therefore it is a priori not clear that the natural surjection $\mes\otimes_\QQ\CC\to\mes^\CC$ is injective.

Next, we recall the $q$-series which appear in the Fourier expansion of multiple Eisenstein series. For this, we put $q=e^{2\pi i\tau}$ and define for $k_1,\dots,k_r\geq1$
\begin{align}\label{eq:def-g}
g_{k_1,\dots,k_r}(\tau)
\coloneqq
\frac{1}{(k_1-1)!\cdots(k_r-1)!}
\summ{c_1>\cdots>c_r>0\\d_1,\dots,d_r>0}
d_1^{k_1-1}\cdots d_r^{k_r-1}
q^{c_1d_1+\cdots+c_rd_r}\,.
\end{align}
These $q$-series are $q$-analogues of multiple zeta values, which were studied by the first author and K\"uhn in \cite{BK1}. Writing $k=k_1+\cdots+k_r$, the top-weight term in the Fourier expansion of $G_{k_1,\dots,k_r}$ is given by $(-2\pi i)^k g_{k_1,\dots,k_r}$. Although the $q$-series $g_{k_1,\dots,k_r}$ are defined for arbitrary $k_1,\dots,k_r\geq1$, we will just consider the case $k_1,\dots,k_r\geq2$ and put
\begin{align}
\mathsf{qMZV}
=\QQ+
\Span_\QQ\{g_{k_1,\dots,k_r}\mid
r\geq1,\ k_1,\dots,k_r\geq2\}\,.
\end{align}
This space is a realization of the algebra considered by Okounkov in \cite{O}, as shown in \cite[Section~2]{BK2}. For $k\geq0$, we denote by $\operatorname{Fil}^W_k\mathsf{qMZV}$ the $\QQ$-span of $1$ and all $g_{k_1,\dots,k_r}$ with $k_1,\dots,k_r\geq2$ and $k_1+\cdots+k_r\leq k$. The product of two of these $q$-series can again be written as a linear combination of them (see \cite{BK1}), which makes $\mathsf{qMZV}$ a $\QQ$-algebra and gives the multiplicative filtration
\begin{align}\label{eq:qMZV-filtered-product}
\operatorname{Fil}^W_k\mathsf{qMZV}\cdot
\operatorname{Fil}^W_\ell\mathsf{qMZV}
\subset
\operatorname{Fil}^W_{k+\ell}\mathsf{qMZV}\,.
\end{align}
We set $\operatorname{Fil}^W_{-1}\mathsf{qMZV}=0$ and put
\begin{align}
\operatorname{gr}^W_k\mathsf{qMZV}
&\coloneqq
\operatorname{Fil}^W_k\mathsf{qMZV}/
\operatorname{Fil}^W_{k-1}\mathsf{qMZV},
\\
\operatorname{gr}^W\mathsf{qMZV}
&\coloneqq
\bigoplus_{k\geq0}\operatorname{gr}^W_k\mathsf{qMZV}\,.
\end{align}

Similar to the case of multiple zeta values, one expects that there are no linear relations among multiple Eisenstein series of different weights. This will follow from the main result of this paper, which states that $\mes$ is an \emph{$\sltwo$-algebra}, i.e. an algebra equipped with three derivations $W,D,\delta$ satisfying
\begin{align}\label{eq:sl2-relations}
[W,D]=2D,\qquad [W,\delta]=-2\delta,\qquad [\delta,D]=W\,.
\end{align}
The classical example of an $\sltwo$-algebra is the $\QQ$-algebra of quasimodular forms for $\SL_2(\ZZ)$, which is given by
\begin{align}
\qmf^\QQ\coloneqq\QQ[G_2,G_4,G_6]\,.
\end{align}
Here the three derivations are given by
\begin{align}\label{eq:qmf-triple}
W(G_k)&=kG_k\qquad (k=2,4,6),
&
D(f)&=2\pi i\frac{d}{d\tau}f,
\\
\delta(G_2)&=-\frac12,
&
\delta(G_4)&=\delta(G_6)=0\,.
\end{align}
By the Ramanujan differential equations, the operator $D$ preserves $\qmf^\QQ$ and these derivations satisfy \eqref{eq:sl2-relations} (see \cite{Za}).

Since the Eisenstein series $G_2,G_4$ and $G_6$ are multiple Eisenstein series of depth one and $\mes$ is an algebra, we have $\qmf^\QQ\subset\mes$. So it is natural to ask if the $\sltwo$-algebra structure of $\qmf^\QQ$ extends to $\mes$. Such an extension was first constructed in a formal setting by the first author and van Ittersum in \cite{BIM}, where the algebra $\mathcal{G}^{\mathrm f}$ of formal multiple Eisenstein series was introduced and shown to be an $\sltwo$-algebra. The subalgebra
\begin{align}
\mes^{\mathrm f}
\coloneqq
\QQ\oplus
\Span_\QQ\{G^{\mathrm f}(k_1,\dots,k_r)\mid
r\geq1,\ k_1,\dots,k_r\geq2\}
\end{align}
is the formal analogue of $\mes$. In \cite{BIM} it was conjectured that $\mes^{\mathrm f}$ is an $\sltwo$-subalgebra of $\mathcal{G}^{\mathrm f}$, which was then proven in \cite[Main Theorem~D]{BKM}. The bi-multiple Eisenstein realization in \cite[Main Theorem~C]{BKM} also shows that the assignment
\begin{align}
\label{eq:formal-realization}
\rho:\mes^{\mathrm f}&\longrightarrow\mes,\\
G^{\mathrm f}(k_1,\dots,k_r)&\longmapsto G_{k_1,\dots,k_r}
\end{align}
gives a well-defined surjective algebra homomorphism. This map is expected to be an isomorphism, but its injectivity is not known and will not be discussed in this paper.

In \cite{BIM} it was also conjectured that the formal $\sltwo$-action induces an $\sltwo$-action on $\mes$, and the proof of this conjecture is the main result of this paper.

\begin{mainthm}\label{thm:main-sl2}
\begin{enumerate}[(i)]
\item The assignments
\begin{align}\label{eq:analytic-triple}
WG_{k_1,\dots,k_r}
&=(k_1+\cdots+k_r)G_{k_1,\dots,k_r},\\
DG_{k_1,\dots,k_r}
&=2\pi i\frac{d}{d\tau}G_{k_1,\dots,k_r},\\
\delta G_{k_1,\dots,k_r}
&=
\begin{cases}
-\dfrac12G_{k_2,\dots,k_r},&k_1=2,\\
0,&k_1>2,
\end{cases}
\end{align}
on the generators of $\mes$ extend to well-defined derivations $W$, $D$ and $\delta$ on $\mes$.

\item The derivations $W$, $D$ and $\delta$ satisfy
\begin{align}
[W,D]=2D,\qquad [W,\delta]=-2\delta,\qquad [\delta,D]=W\,.
\end{align}
In particular, $\mes$ is an $\sltwo$-algebra.

\item Put
\begin{align}
\mesadm
&\coloneqq
\QQ\oplus
\Span_\QQ\{G_{k_1,\dots,k_r}\mid
r\geq1,\ k_1\geq3,\ k_2,\dots,k_r\geq2\}\,.
\end{align}
Then
\begin{align}
\mes=\mesadm[G_2]
\end{align}
is a free polynomial algebra over $\mesadm$, and
\begin{align}
\delta=-\frac12\frac{\partial}{\partial G_2}\,.
\end{align}
\end{enumerate}
\end{mainthm}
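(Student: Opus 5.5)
The strategy is to transport the formal $\sltwo$-structure on $\mes^{\mathrm f}$ (\cite[Main Theorem~D]{BKM}) through the surjection $\rho$ of \eqref{eq:formal-realization}. We show that $\ker\rho$ is stable under the formal derivations $W^{\mathrm f},D^{\mathrm f},\delta^{\mathrm f}$; they then descend to derivations on $\mes=\mes^{\mathrm f}/\ker\rho$. The relations in (ii) hold formally, so they pass to the quotient automatically. On generators the induced maps agree with \eqref{eq:analytic-triple}, provided the formal $D$ is realized by $2\pi i\,d/d\tau$ under $\rho$. I expect this follows from the construction in \cite{BKM} via bi-multiple Eisenstein series, or else from checking that $\rho\circ D^{\mathrm f}=2\pi i\frac{d}{d\tau}\circ\rho$ on generators. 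Once that is in hand, $D$ is automatically well defined on $\mes$.

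The real content is stability of $\ker\rho$ under $W$ and $\delta$. For $W$ this says exactly that $\mes=\bigoplus_k\mes_k$, i.e.\ that $\mes$ is graded by weight. I would prove the stronger statement $\mes^\CC=\bigoplus_k\mes^\CC_k$, which is where the lattice-order estimate announced in the abstract enters. For fixed $N$, I would introduce an order $\ordN$ on $\ZZ\tau+\ZZ$ and compare $G_{k_1,\dots,k_r}(\tau)$ with the analogous sum taken over $\ordN$. The estimate should show that the difference has size $O(N^{-\varepsilon})$, uniformly on a suitable region. In addition, the $\ordN$-sums should transform in a controlled way under a rescaling, for example $\tau\mapsto N\tau$ or a change of lattice basis, under which a weight-$k$ term picks up a factor $N^{-k}$ or $N^{k}$.

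Granting this, suppose $\sum_k f_k=0$ with $f_k\in\mes^\CC_k$. Apply the rescaling and the estimate, and compare powers of $N$ as $N\to\infty$. This forces each $f_k=0$, via a Vandermonde-type argument or by peeling off the top weight first. Hence $W$ is well defined on $\mes$. This also yields the identification with $\operatorname{gr}^W\mathsf{qMZV}$, since the top-weight term of $G_{k_1,\dots,k_r}$ is $(-2\pi i)^kg_{k_1,\dots,k_r}$.

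For $\delta$, I would establish (iii) directly, so that $\delta$ is forced to be $-\frac12\partial/\partial G_2$. First, $\mes=\mesadm[G_2]$ should follow from the harmonic product: repeatedly multiplying $G_2$ with admissible elements expresses any $G_{2,k_2,\dots}$ as $G_2G_{k_2,\dots}$ minus lower-depth and admissible terms, and an induction on depth then gives the claim. For freeness, suppose $\sum_j a_jG_2^j=0$ with $a_j\in\mesadm$. I would use the quasimodular behaviour under $\tau\mapsto-1/\tau$, or the formal fact that $\mes^{\mathrm f}=\mes^{\mathrm f,\mathrm{adm}}[G_2^{\mathrm f}]$ combined with the weight grading, and apply $\delta$ repeatedly to kill the coefficients. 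The cleaner route is to note that the $\sltwo$-relations together with the grading make $\ker\rho$ stable under $\delta$: $\ker\rho$ is graded and $D$-stable, and one argues using finite-dimensionality of the weight pieces and lowest-weight vectors of $\sltwo$-representations.

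The main obstacle is the analytic estimate comparing the $\succ$-ordered sums with the $\ordN$-ordered sums, including conditional convergence when $k_1=2$. Everything else is formal bookkeeping on top of \cite{BKM}.
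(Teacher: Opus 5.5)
Your treatment of $W$, $D$ and the commutator relations matches the paper: weight grading from a lattice-order comparison, $D$ from \cite{BKM}, relations pushed through $\rho$, and $\mes=\mesadm[G_2]$ via the harmonic product. The step for $\delta$, i.e.\ the freeness in (iii), has a genuine gap, and none of your routes closes it.
\begin{enumerate}[(a)]
\item Applying $\delta$ repeatedly to $\sum_ja_jG_2^j=0$ is circular. Freeness of $G_2$ over $\mesadm$ is \emph{equivalent} to $\delta$ being well defined (\cref{prop:delta-equivalences}).
\item Admissible multiple Eisenstein series satisfy no (quasi)modular transformation law under $\tau\mapsto-1/\tau$. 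So the classical argument for the transcendence of $E_2$ over modular forms does not transfer.
\item The formal freeness of $\mes^{\mathrm f}$ over its admissible part says nothing about $\mes$ unless you control $\ker\rho$, and injectivity of $\rho$ is unknown.
\item The representation-theoretic claim is false. A graded $D$-stable subspace of an $\sltwo$-module with finite-dimensional weight spaces need not be $\delta$-stable. For example, take $\Span_\QQ\{(D^{\mathrm f})^jG^{\mathrm f}(4)\mid j\geq1\}$: here $\delta^{\mathrm f}D^{\mathrm f}G^{\mathrm f}(4)=[\delta^{\mathrm f},D^{\mathrm f}]G^{\mathrm f}(4)=4G^{\mathrm f}(4)$, which has weight $4$ and so lies outside the subspace.
\end{enumerate}
The $\delta$-stability of $\ker\rho$ encodes the transcendence of $G_2$ over $\mesadm$. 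That is genuinely analytic information and cannot come from $\sltwo$-bookkeeping.

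The missing idea is a \emph{first-order} version of your lattice-order estimate. Put $z_N=\tau-N$ and $\tau_N=-1/z_N$; this is the $\SL_2(\ZZ)$ change of basis $\gamma_N=T^NS$, not a rescaling $\tau\mapsto N\tau$. For admissible $\kk$ one has exactly $\tau_N^kG_\kk(\tau_N)=G^{(N)}_\kk(\tau)$. \Cref{thm:first-order} then gives $G^{(N)}_\kk-G_\kk=O(\LL(N)^M/N^2)=o(N^{-1})$. So admissible series are modular along $\gamma_N$ up to $o(N^{-1})$, whereas $G_2$ picks up exactly $-\pi i/z_N$. Write $w=\sum_ja_j\ast z_2^{\ast j}$ with $a_j\in\hadm$; this also removes any need to estimate the conditionally convergent sums with $k_1=2$. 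One obtains $G^{(N)}(w)=G(w)+\frac{2\pi i}{z_N}G(\bar\delta w)+o(N^{-1})$. For homogeneous $w\in\ker G$ the left-hand side vanishes, and multiplying by $z_N$ and letting $N\to\infty$ gives $G(\bar\delta w)=0$. An error of size $O(N^{-\varepsilon})$, which is all you aim for, suffices for the weight grading. It cannot detect the $1/N$ term, however, and therefore cannot give $\delta$.
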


Notice that the well-definedness of $W$ in (i) implies that
\begin{align}
\mes=\bigoplus_{k\geq0}\mes_k\,,
\end{align}
i.e. there are no linear relations among multiple Eisenstein series of different weights. That $D$ is well-defined was already shown in \cite{BKM}. Our proof of the weight grading actually gives the following stronger statement for the complex linear span, and together with the comparison of Fourier expansions in \cite{BT} and the rational-structure argument in \cite{HST} we also obtain an identification of the associated graded algebra of $\mathsf{qMZV}$. Since this identifies $\mes$ with $\operatorname{gr}^W\mathsf{qMZV}$, the conjectural dimensions of $\operatorname{gr}^W_k\mathsf{qMZV}$ given by Okounkov in \cite{O} and by the first author and K\"uhn in \cite{BK2} are also the conjectural dimensions of the spaces $\mes_k$, which explains the dimension conjecture for multiple Eisenstein series discussed in \cite{BKM}.

\begin{mainthm}\label{thm:main-complex}
\begin{enumerate}[(i)]
\item The $\CC$-span of multiple Eisenstein series is graded by weight:
\begin{align}
\mes^\CC=\bigoplus_{k\geq0}\mes_k^\CC\,.
\end{align}
In particular, the $\QQ$-algebra of multiple Eisenstein series is graded by weight:
\begin{align}
\mes=\bigoplus_{k\geq0}\mes_k\,.
\end{align}
\item The assignment
\begin{align}\label{eq:Phi-qMZV}
\Phi_k:\mes_k&\longrightarrow
\operatorname{gr}^W_k\mathsf{qMZV},\\
G_{k_1,\dots,k_r}
&\longmapsto
g_{k_1,\dots,k_r}
\bmod\operatorname{Fil}^W_{k-1}\mathsf{qMZV}
\end{align}
is a well-defined isomorphism of $\QQ$-vector spaces, where $\Phi_0(1)=1$.  These maps give an isomorphism of graded $\QQ$-algebras
\begin{align}
\Phi:\mes\xrightarrow{\,\sim\,}
\operatorname{gr}^W\mathsf{qMZV}\,.
\end{align}
Moreover, the map
\begin{align}
\mes\otimes_\QQ\CC\xrightarrow{\,\sim\,}\mes^\CC
\end{align}
is an isomorphism.
\end{enumerate}
\end{mainthm}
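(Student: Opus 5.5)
The plan is to prove (i) by an asymptotic argument in $\tau$ that separates weights through different decay or growth rates. The abstract's hint fixes the concrete route: introduce, for an integer $N\geq1$, the order $\ordN$ on $\ZZ\tau+\ZZ$ obtained by applying $\succ$ to the lattice $\ZZ(N\tau)+\ZZ$. Equivalently, this order compares lattice points via the rescaling $\tau\mapsto N\tau$. Define $G^{(N)}_{k_1,\dots,k_r}(\tau)$ as the same iterated-limit sum but with $\succ$ replaced by $\ordN$. One would like the homogeneity identity
\begin{align}
G^{(N)}_{k_1,\dots,k_r}(\tau)\approx N^{-k}\,(\text{something depending only on } N\tau)\,.
\end{align}
Rather than this exact relation, the plan uses the key estimate the abstract announces. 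It compares $G_{k_1,\dots,k_r}(\tau)$ with $G^{(N)}_{k_1,\dots,k_r}(\tau)$, and shows that their difference tends to $0$ along a suitable family, such as $\tau=iy/N$ or $\tau\to i\infty$ with $N$ coupled appropriately, uniformly enough to pass through finite linear combinations.

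Concretely, assume a relation $\sum_{k\leq K}F_k=0$ with $F_k\in\mes_k^\CC$. We need the scaling identity $\lambda\mapsto \lambda$ with $\tau$ replaced by $N\tau$: the lattice $\ZZ\tau+\ZZ$ with order $\ordN$ restricted to the sublattice $\ZZ N\tau+\ZZ$ reproduces $\succ$. The lattice sums over $\Lambda_\tau$ in order $\ordN$ then decompose into the residue classes of $m \bmod N$. Scaling $\lambda\mapsto N\lambda$ multiplies a weight-$k$ term by $N^{-k}$, so one obtains expressions of the form $\sum_k N^{-k}\,\Psi_k(\tau,N)$. Here $\Psi_k$ converges, as $N\to\infty$ along the chosen regime, to the value at $\tau$ of $F_k$ itself; this uses the Fourier expansion, constant term $\zeta$ plus $q$-series, and the fact that the top-weight $q$-terms dominate. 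Comparing the coefficients of distinct powers $N^{-k}$, which are polynomially separated, forces each $F_k=0$ on an open set and hence everywhere by holomorphy. The main obstacle is the comparison estimate: bounding $G-G^{(N)}$ requires controlling rearrangements of the conditionally convergent sums when $k_1=2$. I would first treat $k_1\geq3$ by absolute convergence and dominated tail bounds of size $O(N^{-1})$ relative to the leading term. The case $k_1=2$ would then be handled via the explicit Eisenstein summation correction, which produces $G_2$-type terms with an explicit $\pi/\mathrm{Im}$-type defect that is itself homogeneous.

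For (ii), well-definedness of $\Phi_k$ would follow from \cite{BT}: the Fourier expansion of $G_{k_1,\dots,k_r}$ equals $(-2\pi i)^k g_{k_1,\dots,k_r}$ plus products of MZVs with $q$-series of lower weight. A $\QQ$-relation in $\mes_k$ therefore gives, on dividing by $(-2\pi i)^k$, a relation of $g$'s modulo lower-weight terms with coefficients in $\mathcal{Z}[\pi^{-1}]$. The rational-structure argument of \cite{HST} descends this to $\QQ$-coefficients in $\operatorname{Fil}^W_{k-1}$. Injectivity runs the reverse way, using (i) to reduce to homogeneous pieces. Multiplicativity comes from the harmonic product being compatible with the leading-term map. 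Finally, $\mes\otimes\CC\cong\mes^\CC$ follows because the map is graded by (i), and on each weight piece the $\CC$-linear independence of $\QQ$-independent elements transfers through $\Phi_k$ to $q$-series with rational coefficients; such $q$-series are $\CC$-independent iff $\QQ$-independent.
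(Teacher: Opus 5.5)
\textbf{There is a genuine gap in part (i).} Your outline of (ii) is essentially the paper's argument. It uses triangularity of the Fourier expansion from \cite{BT}; here you should add that, by \cite[Lemma~2.4]{BT}, the lower-weight $q$-series have all entries $\geq2$ and so lie in $V_{k-1}^{\CC}$. It then uses the rational-structure fact $V_k\cap V_{k-1}^{\CC}=V_{k-1}$, injectivity via the $\CC$-version of (i), and transfer of $\CC$-independence through $\Phi_k$. So everything rests on (i), and there the central mechanism is missing.

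Read literally, the order you propose, obtained by transporting $\succ$ along $a\tau+b\mapsto a(N\tau)+b$ or by restricting it to $\ZZ N\tau+\ZZ$, coincides with $\succ$. So your $G^{(N)}$ is not a new object. More seriously, nothing in your setup gives an \emph{exact} identity turning a relation $\sum_kF_k=0$ into one with weight-dependent scalar factors. The lattice $\ZZ N\tau+\ZZ$ is an index-$N$ sublattice of $\Lambda_\tau$, not a homothetic copy, so $\lambda\mapsto N\lambda$ relates nothing at $\tau$ to anything at $N\tau$. Splitting into residue classes modulo $N$ produces sums with congruence conditions that are not multiple Eisenstein series of level one.

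The claim that your $\Psi_k(\tau,N)$ tend to $F_k(\tau)$ is therefore unsupported, and in the regimes you name the honest limits only see constant terms. As $\tau\to i\infty$, every $G_\kk(\tau)$ tends to $\zeta(\kk)$. The genuine homothety $\Lambda_{iy/N}=N^{-1}(\ZZ iy+N\ZZ)$ gives, by dominated convergence, $N^{-k}G_\kk(iy/N)\to(iy)^{-k}\zeta(\kk)$ for $k_1\geq3$. Separating powers of $N$ then only shows that the constant term of $F_{k_0}$ vanishes. That is a relation among multiple zeta values, not $F_{k_0}=0$.

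\textbf{The missing idea} is to approach the cusp $0$ through $\gamma_N=T^NS\in\SL_2(\ZZ)$, i.e.\ at the points $\tau_N=-1/(\tau-N)$ with $\tau$ \emph{fixed}. Then $\Lambda_{\tau_N}=(\tau-N)^{-1}\Lambda_\tau$ exactly. So for $k_1\geq3$ one has $\tau_N^{k}G_\kk(\tau_N)=G^{(N)}_\kk(\tau)$, where $G^{(N)}_\kk$ is the sum over the fixed lattice $\Lambda_\tau$ for the order $\ordN$ given by $H_N(a\tau+b)=Na+b$. This order converges pointwise to $\succ$. The paper's estimate $G^{(N)}_\kk(\tau)-G_\kk(\tau)=O(\lengthfun{N}^M/N^2)$, locally uniformly in $\tau$, then does the work. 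After evaluating the relation at $\tau_N$ and multiplying by $\tau_N^{k_0}$, the factors $\tau_N^{k_0-k}\to0$ for $k<k_0$ isolate the whole function $F_{k_0}(\tau)$.

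For $k_1=2$ no estimate for conditionally convergent sums is needed. Three ingredients reduce everything to the admissible case:
\begin{enumerate}[(a)]
\item the harmonic regularization $\ha^{\geq2}_\ast\cong\hadm_\ast[z_2]$;
\item the multiplicativity of $w\mapsto\tau_N^{\wt(w)}G(w)(\tau_N)$;
\item the transformation law of $G_2$, which gives $\tau_N^2G_2(\tau_N)=G_2(\tau)-\pi i/(\tau-N)$.
\end{enumerate}
The defect is this holomorphic $O(1/N)$ term, not the non-holomorphic correction $\pi/\operatorname{Im}(\tau)$.
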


The structure of this paper is as follows. In \Cref{sec:regularization}, we introduce an algebraic setup for multiple Eisenstein series and show that the well-definedness of $\delta$ is equivalent to $G_2$ being transcendental over the algebra of absolutely convergent multiple Eisenstein series. The main analytic ingredient is an estimate for sums over the moving lattice orders, which we introduce in \Cref{sec:moving-orders} and whose proof is given in \Cref{sec:proof-first-order}. This estimate is then used in \Cref{sec:weight} to prove \cref{thm:main-complex} and in \Cref{sec:delta} to complete the proof of \cref{thm:main-sl2}.

\vspace{0.5cm}

\noindent\textbf{Acknowledgements.} This project was partially supported by JSPS KAKENHI Grant Number JP26K22254 (H.B.) and 26KJ0525 (H.K).

\section{Polynomial regularization with respect to \texorpdfstring{$z_2$}{z2}}
\label{sec:regularization}

In this section, we want to describe the algebraic structure behind the lowering operator $\delta$. For this, we first define the space
\begin{align}
\ha^{\geq2}\coloneqq\QQ\langle z_k\mid k\geq2\rangle\,,
\end{align}
which is the noncommutative polynomial ring in the letters $z_k$ with $k\geq 2$, and we equip it with the harmonic product, which is defined recursively by
\begin{align}
w\ast1=1\ast w=w
\end{align}
and
\begin{align}\label{eq:harmonic-product}
z_ku\ast z_lv
=z_k(u\ast z_lv)+z_l(z_ku\ast v)+z_{k+l}(u\ast v)\,.
\end{align}
It gives $\ha^{\geq2}$ the structure of a commutative $\QQ$-algebra, which we denote by $\ha^{\geq2}_\ast$.

Notice that the $\QQ$-linear map
\begin{align}\label{eq:realization-G}
G:\ha^{\geq2}_\ast&\longrightarrow\mes,\\
z_{k_1}\cdots z_{k_r}&\longmapsto G_{k_1,\dots,k_r}
\end{align}
is a surjective algebra homomorphism, which also includes the words starting with the letter $z_2$.

The weight of a word is given by
\begin{align}
\wt(1)=0,\qquad
\wt(z_{k_1}\cdots z_{k_r})=k_1+\cdots+k_r\,.
\end{align}
We denote by $\ha^{\geq2}_k$ the span of all words of weight $k$. The harmonic product is compatible with the weight and the map in \eqref{eq:realization-G} sends a word of weight $k$ to $\mes_k$, which does not use the direct sum decomposition in \cref{thm:main-complex}.

We also define the subspace of admissible words, i.e. those for which $G$ converges absolutely, by
\begin{align}\label{eq:def-Hadm}
\hadm
=
\QQ\oplus
\Span_\QQ\{z_{k_1}\cdots z_{k_r}\mid r\geq1,\ k_1\geq3,\ k_2,\dots,k_r\geq2\}\,.
\end{align}
We write
\begin{align}
\hadm_k\coloneqq\hadm\cap\ha^{\geq2}_k
\end{align}
for its homogeneous part of weight $k$. By \eqref{eq:harmonic-product} the space $\hadm$ is a subalgebra of $\ha^{\geq2}_\ast$, which we denote by $\hadm_\ast$ when we want to emphasize the harmonic product.

\begin{prop}\label{prop:polynomial-regularization}
There is an isomorphism of weight-graded algebras
\begin{align}\label{eq:H-polynomial}
\ha^{\geq2}_\ast\cong\hadm_\ast[z_2]\,.
\end{align}
Equivalently, every $w\in\ha^{\geq2}$ can be written uniquely as
\begin{align}\label{eq:canonical-decomposition}
w=\sum_{j=0}^d a_j\ast z_2^{\ast j},
\qquad a_j\in\hadm\,,
\end{align}
where $z_2^{\ast0}=1$.
\end{prop}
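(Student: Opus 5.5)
The plan is to show that the $\QQ$-algebra homomorphism
\begin{align}
\Psi:\hadm_\ast[X]\longrightarrow\ha^{\geq2}_\ast,\qquad X\longmapsto z_2,
\end{align}
extended from the inclusion $\hadm\subset\ha^{\geq2}$, is bijective. It preserves weight once we give $X$ weight $2$, since the harmonic product is weight-homogeneous. The approach is the standard one for regularization with respect to a single letter, as for $z_1$ in the stuffle regularization of Ihara--Kaneko--Zagier. The difference here is that the letter $z_2$ is allowed inside admissible words, just not in first position. All arguments will be by induction on the number of leading $z_2$'s.

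\textbf{Surjectivity.} Every word can be written uniquely as $z_2^{j}u$ with $j\geq0$ and $u$ either $1$ or a word with first letter $z_k$, $k\geq3$; in particular $u\in\hadm$. Let $\ha^{(j)}$ denote the span of words with at most $j$ leading $z_2$'s. I will show by induction on $j$ that $\ha^{(j)}$ lies in the image of $\Psi$.

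The key computation is the harmonic product $z_2\ast z_2^{j-1}u$ via \eqref{eq:harmonic-product}. Its terms beginning with $z_2^{j}$ all have the form $z_2^{j}u$. The remaining terms are of two kinds:
\begin{itemize}
\item terms where $z_2$ is inserted or merged after the initial block $z_2^{j-1}$, possibly creating a $z_4$ in the block, which have fewer than $j$ leading $z_2$'s;
\item terms where $z_2$ is inserted later inside $u$, which are $z_2^{j-1}u'$ with $u'$ starting with the first letter of $u$, also in $\ha^{(j-1)}$.
\end{itemize}
The first letter of $u$ cannot combine with $z_2$ to create another leading $z_2$, since merging only increases indices. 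So
\begin{align}
z_2\ast z_2^{j-1}u=j\,z_2^{j}u+(\text{element of }\ha^{(j-1)}).
\end{align}
More cleanly, $z_2^{\ast j}\ast u\equiv j!\,z_2^{j}u \bmod \ha^{(j-1)}$. Hence
\begin{align}
z_2^ju=\tfrac1{j!}\,z_2^{\ast j}\ast u+(\text{lower}),
\end{align}
and induction gives surjectivity. The factor $1/j!$ is why we work over $\QQ$.

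\textbf{Injectivity.} The congruence above shows that the map $\bigoplus_{j}\hadm\,X^j\to\ha^{\geq2}$ is triangular with respect to the filtration by leading-$z_2$ count. On the associated graded it sends $u X^j$ to $j!\,z_2^ju$. The map $u\mapsto z_2^ju$ from words of $\hadm$ to words with exactly $j$ leading $z_2$'s is a bijection of bases, by the unique decomposition of words. Therefore the associated graded map is an isomorphism. A filtered map whose associated graded is bijective, with exhaustive filtrations bounded below, is bijective.

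Concretely: suppose $\sum_{j\le d}a_j\ast z_2^{\ast j}=0$ with $a_d\neq0$. Reducing modulo $\ha^{(d-1)}$ gives $d!\,z_2^d a_d=0$, hence $a_d=0$, a contradiction. This yields the uniqueness in \eqref{eq:canonical-decomposition}, and weight-gradedness follows by comparing homogeneous components.

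\textbf{Main obstacle.} The only delicate step is the leading-term congruence $z_2^{\ast j}\ast u\equiv j!\,z_2^ju \bmod \ha^{(j-1)}$ for $u\in\hadm$ a word. I would prove it by induction on $j$ using the recursive definition \eqref{eq:harmonic-product}. To get the coefficient right, first note that for the product of $z_2$ with a word $z_2^{i}v$, where $v$ does not begin with $z_2$, the terms with $i+1$ leading $z_2$'s are exactly $(i+1)$ copies of $z_2^{i+1}v$: there are $i+1$ insertion positions before $v$. Every other term, namely contractions to $z_4$ or insertions inside $v$, has at most $i$ leading $z_2$'s. This uses only that the first letter of $v$ is $\neq z_2$ and that contraction never produces $z_2$ from letters of index $\geq2$. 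This last point is where the hypothesis $k\geq2$ on letters is essential.
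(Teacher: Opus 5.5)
Your proposal is correct and follows the same route as the paper. Both rest on the leading-term congruence $z_2^{\ast j}\ast u\equiv j!\,z_2^{j}u$ modulo words with fewer than $j$ leading $z_2$'s, which makes the bases $\{z_2^{j}u\}$ and $\{u\ast z_2^{\ast j}\}$ triangularly related; you simply spell out the induction and the filtration argument in more detail than the paper does.
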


\begin{proof}
This is the usual harmonic regularization. Every word can be written uniquely as $z_2^mv$, where $v=1$ or $v$ starts with a letter $z_k$ with $k\geq3$, and by induction using \eqref{eq:harmonic-product} we get
\begin{align}
z_2^{\ast m}\ast v
=m!\,z_2^mv+
\text{a linear combination of words with fewer than $m$ initial
letters $z_2$}\,.
\end{align}
Since a word of weight $k$ corresponds to a composition of $k$ with all parts at least $2$, there are only finitely many words of a fixed weight, and the bases $\{z_2^mv\}$ and $\{v\ast z_2^{\ast m}\}$ are therefore related by an invertible triangular matrix. This proves the existence and uniqueness in \eqref{eq:canonical-decomposition}.
\end{proof}

Define the linear map $\bar\delta:\ha^{\geq2}\to\ha^{\geq2}$ by
\begin{align}\label{eq:formal-delta}
\bar\delta(z_{k_1}\cdots z_{k_r})
=
\begin{cases}
-\dfrac12z_{k_2}\cdots z_{k_r},&k_1=2,\\
0,&k_1>2,
\end{cases}
\qquad
\bar\delta(1)=0\,.
\end{align}
This is the restriction of the lowering operator on formal multiple Eisenstein series from \cite{BIM}.

\begin{lem}\label{lem:delta-polynomial}
The map $\bar\delta$ is a derivation of $\ha^{\geq2}_\ast$.  Under the isomorphism \eqref{eq:H-polynomial}, it is given by
\begin{align}\label{eq:delta-partial}
\bar\delta=-\frac12\frac{\partial}{\partial z_2}\,.
\end{align}
In particular, if $w$ has the decomposition \eqref{eq:canonical-decomposition}, then
\begin{align}\label{eq:delta-decomposition}
\bar\delta(w)
=-\frac12\sum_{j=1}^d
j\,a_j\ast z_2^{\ast(j-1)}\,.
\end{align}
\end{lem}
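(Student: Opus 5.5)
Our plan is to first show directly that $\bar\delta$ is a derivation of $\ha^{\geq2}_\ast$, i.e.
\begin{align}
\bar\delta(u\ast v)=\bar\delta(u)\ast v+u\ast\bar\delta(v)
\end{align}
for all words $u,v$. We then read off the description \eqref{eq:delta-partial} from the polynomial structure of \cref{prop:polynomial-regularization}.

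For the derivation property we argue by induction on $\ell(u)+\ell(v)$, where $\ell$ denotes the length of a word. If $u=1$ or $v=1$ the identity is trivial, since $\bar\delta(1)=0$. For $u=z_ku'$ and $v=z_lv'$ we apply $\bar\delta$ to the three terms of \eqref{eq:harmonic-product}. The key observation is that $\bar\delta(z_m w)=-\tfrac12[m=2]\,w$, where $[m=2]$ is $1$ if $m=2$ and $0$ otherwise. Hence $\bar\delta$ only sees the first letter and never requires the induction hypothesis on the remaining product. The third term $z_{k+l}(u'\ast v')$ contributes nothing, since $k+l\geq4$. The first term contributes $-\tfrac12[k=2]\,(u'\ast z_lv')$, and the second contributes $-\tfrac12[l=2]\,(z_ku'\ast v')$. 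On the other side, $\bar\delta(u)\ast v=-\tfrac12[k=2]\,u'\ast z_lv'$ and $u\ast\bar\delta(v)=-\tfrac12[l=2]\,z_ku'\ast v'$. The two sides agree term by term, so no induction is actually needed. The only care required is the convention for the cases $k=2$ or $l=2$, which is handled by the indicators.

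Next we compute $\bar\delta$ on the generators of $\hadm_\ast[z_2]$. For $a\in\hadm$, either $a\in\QQ$ or $a$ is a combination of words starting with $z_k$ where $k\geq3$, so $\bar\delta(a)=0$. Also $\bar\delta(z_2)=-\tfrac12$. A derivation of a polynomial algebra $A[x]$ that vanishes on $A$ and sends $x$ to the constant $c$ equals $c\,\partial/\partial x$. Concretely, the Leibniz rule gives $\bar\delta(z_2^{\ast j})=-\tfrac12 j\,z_2^{\ast(j-1)}$ by induction on $j$, and hence $\bar\delta(a_j\ast z_2^{\ast j})=-\tfrac12 j\,a_j\ast z_2^{\ast(j-1)}$. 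Summing over the unique decomposition \eqref{eq:canonical-decomposition} yields \eqref{eq:delta-decomposition}, and therefore \eqref{eq:delta-partial}.

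The main step is the derivation check. It is essentially immediate once one notices that $\bar\delta$ kills every word whose first letter is not $z_2$, and in particular the merged term $z_{k+l}(\cdots)$. Everything else is formal.
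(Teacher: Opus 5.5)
Your proposal is correct and follows essentially the same route as the paper. The Leibniz rule comes straight from the recursive definition \eqref{eq:harmonic-product}: the merged term begins with $z_{k+l}$, $k+l\geq4$, so $\bar\delta$ kills it, and the two unmerged terms reproduce $\bar\delta(u)\ast v+u\ast\bar\delta(v)$. Then $\bar\delta(\hadm)=0$ and $\bar\delta(z_2)=-\frac12$, together with the unique decomposition of \cref{prop:polynomial-regularization}, give the identification with $-\frac12\,\partial/\partial z_2$; your indicator computation simply makes explicit the paper's one-line remark about which terms of $u\ast v$ start with $z_2$.
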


\begin{proof}
The derivation property follows directly from \eqref{eq:harmonic-product}, since a term starting with $z_2$ in the harmonic product of two words is obtained by taking the first letter $z_2$ of one of the two words without merging it, and removing this letter gives exactly
\begin{align}
\bar\delta(u\ast v)=\bar\delta(u)\ast v+u\ast\bar\delta(v)\,.
\end{align}
Since $\bar\delta$ vanishes on $\hadm$ and satisfies $\bar\delta(z_2)=-\frac12$, the uniqueness in \cref{prop:polynomial-regularization} gives \eqref{eq:delta-partial} and \eqref{eq:delta-decomposition}.
\end{proof}

It is not clear that the decomposition \eqref{eq:H-polynomial} is still valid after applying the map $G$, and in the following proposition we show that this is equivalent to $\bar\delta$ inducing a well-defined derivation on $\mes$. For this, we define the space of absolutely convergent multiple Eisenstein series, i.e. those with $k_1\geq 3$, by
\begin{align}
\mesadm=G(\hadm)\,.
\end{align}
Since $\hadm$ is a subalgebra, $\mesadm$ is a subalgebra of $\mes$, and by \cref{prop:polynomial-regularization} we have
\begin{align}\label{eq:E-generation}
\mes=\mesadm[G_2]\,,
\end{align}
but it is a priori not clear that this representation as a polynomial in $G_2$ is unique.

\begin{prop}\label{prop:delta-equivalences}
The following statements are equivalent.
\begin{enumerate}
\item The map $\bar\delta$ preserves $\ker(G)$ and therefore induces a well-defined derivation on $\mes$, again denoted by $\delta$.
\item The evaluation map
\begin{align}
\operatorname{ev}_{G_2}:\mesadm[T]&\longrightarrow\mes,\\
T&\longmapsto G_2
\end{align}
is injective.
\item The Eisenstein series $G_2$ is transcendental over $\mesadm$.
\end{enumerate}
If these statements hold, then
\begin{align}
\mes=\mesadm[G_2]
\end{align}
freely and
\begin{align}\label{eq:delta-G2}
\delta=-\frac12\frac{\partial}{\partial G_2}\,.
\end{align}
\end{prop}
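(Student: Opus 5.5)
The plan is to transport everything through the isomorphism $\ha^{\geq2}_\ast\cong\hadm_\ast[z_2]$ of \cref{prop:polynomial-regularization}. Define the surjective algebra homomorphism $\widetilde G:\hadm_\ast[T]\to\mes$ by $a\mapsto G(a)$ for $a\in\hadm$ and $T\mapsto G_2$. Under the identification $T=z_2$ we have $\widetilde G=G$, because $G$ is an algebra homomorphism for the harmonic product and $G(z_2)=G_2$. The map $\widetilde G$ factors as
\begin{align}
\hadm_\ast[T]\xrightarrow{\;G|_{\hadm}\otimes\mathrm{id}\;}\mesadm[T]\xrightarrow{\;\operatorname{ev}_{G_2}\;}\mes\,,
\end{align}
and the first arrow is surjective with kernel $\ker(G|_{\hadm})[T]$, i.e.\ polynomials whose coefficients all lie in $\ker(G)\cap\hadm$. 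Consequently $\ker(G)=\ker(G|_{\hadm})[T]$ holds exactly when $\operatorname{ev}_{G_2}$ is injective. The equivalence (2)$\Leftrightarrow$(3) is just the definition of transcendence: a nonzero element of $\ker(\operatorname{ev}_{G_2})$ is a nontrivial polynomial relation for $G_2$ over $\mesadm$.

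For (2)$\Rightarrow$(1), I would use \cref{lem:delta-polynomial}, which says $\bar\delta=-\frac12\partial/\partial T$. If $w=\sum_j a_j\ast z_2^{\ast j}\in\ker G$, then (2) together with the factorization gives $G(a_j)=0$ for all $j$. Hence
\begin{align}
G(\bar\delta w)=-\tfrac12\sum_j j\,G(a_j)G_2^{j-1}=0\,,
\end{align}
so $\bar\delta$ preserves $\ker G$. The induced map $\delta$ on $\mes$ is then a derivation because $G$ is a surjective algebra homomorphism.

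For (1)$\Rightarrow$(2), suppose $P=\sum_{j=0}^d b_jT^j\in\mesadm[T]$ is nonzero with $P(G_2)=0$, and take $d$ minimal with $b_d\neq0$. Lift $P$ to $w=\sum a_j\ast z_2^{\ast j}$ with $G(a_j)=b_j$; then $w\in\ker G$. Applying $\bar\delta$ $d$ times, and using (1) each time, keeps us in $\ker G$. By \eqref{eq:delta-decomposition} we get
\begin{align}
\bar\delta^d w=(-\tfrac12)^d d!\,a_d\,,
\end{align}
so $G(a_d)=b_d=0$, a contradiction. If $d=0$, then $P=b_0$ and $P(G_2)=b_0=0$ directly.

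Finally, assuming these conditions, $\mes=\mesadm[G_2]$ freely: generation is \eqref{eq:E-generation} and freeness is (2). The formula \eqref{eq:delta-G2} follows by applying $G$ to \eqref{eq:delta-decomposition}. The only mildly delicate point is the kernel identification $\ker\widetilde G=\ker(G|_{\hadm})[T]$ under (2). This rests on the uniqueness of the decomposition in \cref{prop:polynomial-regularization} and on the fact that $G$ respects $\ast$, which makes $\widetilde G=G$ legitimate. Otherwise the argument is formal.
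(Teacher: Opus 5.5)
Your proposal is correct and is essentially the paper's argument. For (2)$\Rightarrow$(1) you show directly that $G(a_j)=0$ for all $j$ and hence $G(\bar\delta w)=0$, which is what the paper's transport of $-\frac12\partial/\partial T$ to $\mes$ amounts to. For (1)$\Rightarrow$(2) you apply $\bar\delta^d$ to a lift $w\in\ker(G)$ rather than applying the induced $\delta^d$ to the relation in $\mes$; this is the same computation one level up.

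One wording fix: $d$ must be the degree of $P$ (the largest index with $b_d\neq0$), not the smallest index with $b_d\neq0$. Otherwise $\bar\delta^d w$ is not a multiple of $a_d$.
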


\begin{proof}
Assume (i). Then the induced derivation $\delta$ vanishes on $\mesadm$ and satisfies $\delta(G_2)=-\frac12$. If we have
\begin{align}
b_0+b_1G_2+\cdots+b_dG_2^d=0,
\qquad b_j\in\mesadm\,.
\end{align}
then applying $\delta^d$ gives $d!\left(-\frac12\right)^db_d=0$,
and therefore $b_d=0$. Repeating this argument gives $b_j=0$ for all $j$, which proves (ii).

The equivalence of (ii) and (iii) is just the definition of transcendence. Now assume (ii). Then by \eqref{eq:E-generation} the evaluation map gives an isomorphism $\mesadm[T]\cong\mes$, and we can transport the derivation $-\frac12\partial/\partial T$ to $\mes$. If $w$ has the decomposition \eqref{eq:canonical-decomposition}, then \eqref{eq:delta-decomposition} shows that this derivation sends $G(w)$ to $G(\bar\delta w)$. So $\bar\delta$ preserves $\ker(G)$, which proves (i), and \eqref{eq:delta-G2} follows.
\end{proof}

So the remaining task is to prove $\bar\delta(\ker(G))\subset\ker(G)$, which we will deduce from the main estimate in \Cref{sec:moving-orders}.

\section{Moving lattice orders and the main estimate}\label{sec:moving-orders}

In this section, we want to introduce a family of orders on the lattice $\ZZ\tau+\ZZ$ depending on an integer $N\geq1$ and state the main estimate of this paper. For this, we fix $\tau\in\HH$ and write
\begin{align}
\Lambda_\tau=\ZZ\tau+\ZZ\,.
\end{align}
For an integer $N\geq1$ and $\lambda=a\tau+b\in\Lambda_\tau$, we define
\begin{align}
H_N(\lambda)=Na+b\,.
\end{align}
Using this, we define the positive cone
\begin{align}\label{eq:moving-cone}
P_N
\coloneqq
\{\lambda\in\Lambda_\tau\mid H_N(\lambda)>0\}
\cup
\{a\tau+b\in\Lambda_\tau\mid H_N(a\tau+b)=0,\ a<0\}\,.
\end{align}

For $\tau=i$, the following diagrams show the positive lattice points in the range $-4\leq a,b\leq4$.

\begingroup
\newcommand{\finiteconepanel}[2]{%
  \begin{scope}[shift={(#1,0)}]
    \draw[step=1,gray!35,very thin,dotted]
      (-4.15,-4.15) grid (4.15,4.15);
    \draw[->,thin] (-4.35,0) -- (4.45,0)
      node[right,font=\scriptsize] {$b$};
    \draw[->,thin] (0,-4.35) -- (0,4.45);
    \node[left,font=\scriptsize] at (0,4.05) {$a$};
    \foreach \a in {-4,...,4}{
      \foreach \b in {-4,...,4}{
        \fill[gray!45] (\b,\a) circle (1.5pt);
        \pgfmathtruncatemacro{\hvalue}{#2*\a+\b}
        \ifnum\hvalue>0
          \fill[black] (\b,\a) circle (3pt);
        \else
          \ifnum\hvalue=0
            \ifnum\a<0
              \fill[black] (\b,\a) circle (3pt);
            \fi
          \fi
        \fi
      }
    }
    \node[font=\small] at (0,4.85) {$N=#2$};
  \end{scope}%
} \newcommand{\limitconepanel}[1]{%
  \begin{scope}[shift={(#1,0)}]
    \draw[step=1,gray!35,very thin,dotted]
      (-4.15,-4.15) grid (4.15,4.15);
    \draw[->,thin] (-4.35,0) -- (4.45,0)
      node[right,font=\scriptsize] {$b$};
    \draw[->,thin] (0,-4.35) -- (0,4.45);
    \node[left,font=\scriptsize] at (0,4.05) {$a$};
    \foreach \a in {-4,...,4}{
      \foreach \b in {-4,...,4}{
        \fill[gray!45] (\b,\a) circle (1.5pt);
        \ifnum\a>0
          \fill[black] (\b,\a) circle (3pt);
        \else
          \ifnum\a=0
            \ifnum\b>0
              \fill[black] (\b,\a) circle (3pt);
            \fi
          \fi
        \fi
      }
    }
    \node[font=\small] at (0,4.85) {$N\longrightarrow\infty$};
  \end{scope}%
}

\begin{center}
  \begin{tikzpicture}[scale=0.43]
    \finiteconepanel{0}{1}
    \finiteconepanel{11}{2}
    \limitconepanel{22}
  \end{tikzpicture}
\end{center}
\endgroup

The corresponding total order $\ordN$ is given by
\begin{align}\label{eq:moving-order}
a_1\tau+b_1\ordN a_2\tau+b_2
\end{align}
if $H_N(a_1\tau+b_1)>H_N(a_2\tau+b_2)$, or if these values are equal and $a_1<a_2$, and we therefore have
\begin{align}
P_N=\{\lambda\mid\lambda\ordN0\}\,.
\end{align}

Notice that for two fixed distinct lattice points, the order $\ordN$ agrees with the order $\succ$ for $N$ large enough, since the sign of
\begin{align}
H_N((a_1-a_2)\tau+(b_1-b_2))
=N(a_1-a_2)+(b_1-b_2)
\end{align}
is determined by $a_1-a_2$ for large $N$, and by $b_1-b_2$ if $a_1=a_2$. If $a_1\neq a_2$, then this expression vanishes for at most one $N$, and for this $N$ the tie-break $a_1<a_2$ in \eqref{eq:moving-order} is used, which does not matter for large $N$.

For $k_1\geq3$ and $k_2,\dots,k_r\geq2$, we define the absolutely convergent sum
\begin{align}\label{eq:moving-G}
G^{(N)}_{k_1,\dots,k_r}(\tau)
\coloneqq
\summ{\lambda_1,\dots,\lambda_r\in\Lambda_\tau\\\lambda_1\ordN\cdots\ordN\lambda_r\ordN0}
\frac{1}{\lambda_1^{k_1}\cdots\lambda_r^{k_r}}\,.
\end{align}

For later use, we put
\begin{align}
z_N=\tau-N,
\qquad
\tau_N=-\frac{1}{z_N}\,.
\end{align}

\begin{lem}\label{lem:coordinate-change}
Let $k=k_1+\cdots+k_r$, where $k_1\geq3$ and $k_2,\dots,k_r\geq2$.  Then
\begin{align}\label{eq:exact-moving-identity}
\tau_N^{k}G_{k_1,\dots,k_r}(\tau_N)
=G^{(N)}_{k_1,\dots,k_r}(\tau)\,.
\end{align}
\end{lem}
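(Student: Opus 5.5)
The plan is to prove \eqref{eq:exact-moving-identity} by an explicit change of variables between the lattices $\Lambda_{\tau_N}$ and $\Lambda_\tau$. Since $k_1\geq3$ and $k_2,\dots,k_r\geq2$, the defining sum of $G_{k_1,\dots,k_r}(\tau_N)$ converges absolutely. The iterated limit in \eqref{eq:def-mes} therefore equals the unrestricted sum over all $\mu_1\succ\cdots\succ\mu_r\succ0$ in $\Lambda_{\tau_N}$, and we may rearrange terms freely. We will exhibit a bijection $\phi:\Lambda_{\tau_N}\to\Lambda_\tau$ which carries the order $\succ$ to $\ordN$ and under which the summands match after multiplying by $\tau_N^k$.

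The bijection is $\phi(\mu)=\mu/\tau_N=-z_N\mu$. For $\mu=c\tau_N+d$ with $c,d\in\ZZ$, using $z_N\tau_N=-1$ we get
\begin{align}
\phi(c\tau_N+d)=c-dz_N=-d\,\tau+(c+dN)\,.
\end{align}
So $\phi(\mu)=a\tau+b$ with $a=-d$ and $b=c+dN$. This map is $\ZZ$-linear with inverse $(a,b)\mapsto(c,d)=(b+Na,-a)$, hence a bijection $\Lambda_{\tau_N}\to\Lambda_\tau$. It is well defined since $\tau_N\in\HH$, because $z_N\in\HH$.

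Next, we check that the orders match. We have $H_N(\phi(\mu))=Na+b=c$ and $a=-d$. Therefore $\mu_1\succ\mu_2$ in $\Lambda_{\tau_N}$, which means $c_1>c_2$, or $c_1=c_2$ and $d_1>d_2$, translates into one of two cases. Either $H_N(\phi\mu_1)>H_N(\phi\mu_2)$, or these values are equal and $a_1<a_2$. This is precisely \eqref{eq:moving-order}, so $\mu_1\succ\mu_2\iff\phi(\mu_1)\ordN\phi(\mu_2)$. Since $\phi(0)=0$, this includes the condition $\mu_r\succ0$ corresponding to $\phi(\mu_r)\in P_N$.

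Finally, for each chain $\mu_1\succ\cdots\succ\mu_r\succ0$ we have
\begin{align}
\frac{\tau_N^{k}}{\mu_1^{k_1}\cdots\mu_r^{k_r}}
=\prod_{j=1}^r\Big(\frac{\mu_j}{\tau_N}\Big)^{-k_j}
=\frac{1}{\phi(\mu_1)^{k_1}\cdots\phi(\mu_r)^{k_r}}\,,
\end{align}
using $k=k_1+\cdots+k_r$. Summing over all chains and reindexing via $\phi$ gives \eqref{eq:exact-moving-identity}; the reindexing is justified by absolute convergence on both sides.

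The only point requiring care is the identification of the iterated-limit definition with the full ordered sum. This uses absolute convergence for $k_1\geq3$, which the paper already notes. Everything else is the linear algebra above, where the expected obstacle is just getting the sign and tie-break conventions right.
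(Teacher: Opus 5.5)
Your proposal is correct and matches the paper's proof: you use the same lattice map $m\tau_N+n\mapsto m-nz_N=(-n)\tau+(m+Nn)$, which you write as $\mu\mapsto\mu/\tau_N$. You also use the same translation $H_N=m$, $a=-n$ of the lexicographic order into $\ordN$, and the same appeal to absolute convergence for $k_1\geq3$ to justify reordering the sum.
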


\begin{proof}
For a lattice point $m\tau_N+n$, put
\begin{align}
\lambda
\coloneqq
m-nz_N
=(-n)\tau+(m+Nn)\,.
\end{align}
The coordinate map
\begin{align}
(m,n)\longmapsto(a,b)=(-n,m+Nn)
\end{align}
is given by a matrix in $\SL_2(\ZZ)$ and therefore a bijection. Writing $\lambda=a\tau+b$, we have
\begin{align}
a=-n,\qquad b=m+Nn,\qquad H_N(\lambda)=m\,,
\end{align}
and
\begin{align}
m\tau_N+n=-\frac{\lambda}{z_N}\,.
\end{align}
The condition $m>0$, or $m=0$ and $n>0$, becomes $H_N(\lambda)>0$, or $H_N(\lambda)=0$ and $a<0$, and the lexicographic order given first by $m$ and then by $n$ becomes the order $\ordN$ in \eqref{eq:moving-order}. This change of coordinates therefore gives the factor $(-1)^kz_N^k$ and the claimed identity. Notice that the assumption $k_1\geq3$ is needed here, since it makes both sides absolutely convergent, which justifies the reordering of the sum. For $k_1=2$ one would also need to transport the Eisenstein summation in \eqref{eq:def-mes}.
\end{proof}

\begin{rem}\label{rem:sl2-moving-cone}
The cones $P_N$ can also be obtained from the $\SL_2(\ZZ)$-action on positive subsets of $\ZZ^2$ considered in \cite[Sections~4.1 \& ~4.2]{Ba1}. For this, we identify $a\tau+b$ with the row vector $(a,b)$ and put
\begin{align}
P_{\mathrm{std}}
\coloneqq
\{(a,b)\in\ZZ^2\mid a>0\text{ or }(a=0\text{ and }b>0)\}\,.
\end{align}
For $\gamma\in\SL_2(\ZZ)$, we consider the action $\gamma\mathbin{\cdot}P=\{v\in\ZZ^2\mid v\gamma\in P\}$. With
\begin{align}
T=
\begin{pmatrix}1&1\\0&1\end{pmatrix},
\qquad
S=
\begin{pmatrix}0&-1\\1&0\end{pmatrix},
\qquad
\gamma_N=T^NS=
\begin{pmatrix}N&-1\\1&0\end{pmatrix}\,,
\end{align}
we have
\begin{align}
(a,b)\gamma_N=(Na+b,-a)\,.
\end{align}
This shows that the elements of $P_N$ correspond exactly to $\gamma_N\mathbin{\cdot}P_{\mathrm{std}}$. We also have
\begin{align}
\gamma_N^{-1}
=
\begin{pmatrix}0&1\\-1&N\end{pmatrix},
\qquad
(m,n)\gamma_N^{-1}=(-n,m+Nn)\,,
\end{align}
which is exactly the change of coordinates in the proof of \cref{lem:coordinate-change}, and the M\"obius action of $\gamma_N^{-1}$ gives
\begin{align}
\gamma_N^{-1}\tau=\frac{1}{N-\tau}
=-\frac{1}{\tau-N}\,,
\end{align}
which is the point $\tau_N$ used there. The conjugated shear $S^{-1}T^NS=\left(\begin{smallmatrix}1&0\\-N&1\end{smallmatrix}\right)$ also appears here, and for $N=1$ it represents the same element of $\operatorname{PSL}_2(\ZZ)$ as $STS$.
\end{rem}

For $t\geq0$, we write
\begin{align}
\lengthfun{t}=1+\log(2+t)\,.
\end{align}

For a homogeneous $w\in\ha^{\geq2}$ of weight $k$, we define
\begin{align}\label{eq:def-SN}
G^{(N)}(w)(\tau)
\coloneqq
\tau_N^{k}
G(w)(\tau_N)\,.
\end{align}
For fixed $N$, this map is multiplicative on homogeneous elements, i.e. we have
\begin{align}\label{eq:SN-multiplicative}
G^{(N)}(u\ast v)
=G^{(N)}(u)G^{(N)}(v)\,.
\end{align}

The following estimate is the main analytic result of this paper, and its proof will be given in \Cref{sec:proof-first-order}.

\begin{thm}\label{thm:first-order}
For every index $k_1\geq3$, $k_2,\dots,k_r\geq2$, there is an exponent $M\geq0$ such that, locally uniformly for $\tau\in\HH$,
\begin{align}\label{eq:first-order-index}
G^{(N)}_{k_1,\dots,k_r}(\tau)-G_{k_1,\dots,k_r}(\tau)
=
O\left(\frac{\lengthfun{N}^M}{N^2}\right)\,.
\end{align}
\end{thm}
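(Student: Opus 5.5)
We compare the two absolutely convergent sums directly, term by term, on the lattice $\Lambda_\tau$. Both $G^{(N)}_{k_1,\dots,k_r}(\tau)$ and $G_{k_1,\dots,k_r}(\tau)$ are sums of $\lambda_1^{-k_1}\cdots\lambda_r^{-k_r}$ over chains in $\Lambda_\tau\setminus\{0\}$. The summation domains are $\lambda_1\ordN\cdots\ordN\lambda_r\ordN0$ and $\lambda_1\succ\cdots\succ\lambda_r\succ0$ respectively. The difference is therefore a signed sum over those tuples $(\lambda_1,\dots,\lambda_r)$ for which at least one of the $r$ comparisons $\lambda_i$ versus $\lambda_{i+1}$ (with $\lambda_{r+1}=0$) is decided differently by $\ordN$ and by $\succ$.

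\textbf{Step 1: the discrepancy region.} For a difference $\mu=a\tau+b=\lambda_i-\lambda_{i+1}$, the two orders disagree only if $a\neq0$ and the sign of $Na+b$ differs from the sign of $a$. This forces $|b|\geq N|a|$, in particular $|b|\geq N$, so $|\mu|\gg_\tau N$ and in fact $|\mu|\asymp|b|$. By inclusion--exclusion over the set of disagreeing positions, it suffices to bound, for each $i$, sums in which the consecutive difference $\lambda_i-\lambda_{i+1}$ lies in the thin double cone
\begin{align}
C_N=\{a\tau+b : a\neq0,\ |b|\geq N|a|\}\,.
\end{align}
All other consecutive pairs obey one of the two orders and are merely constrained, not necessarily in a sign-coherent way.

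\textbf{Step 2: reduce to depth-one type estimates.} We bound absolute values. The region constraints are replaced by the weaker condition that all $\lambda_j$ are nonzero and that the relevant difference lies in $C_N$, with the remaining summations left free. The free summations cannot be taken absolutely with exponent $2$, so instead of bounding them crudely we use the nested structure. Summing $\lambda_r$ first over a half-plane-type region produces an Eisenstein-type partial sum, which is bounded by $O(\lengthfun{|\lambda_{r-1}|}^{c})$ via the standard estimate $\sum_{0<|\lambda|\le X}|\lambda|^{-2}\ll\lengthfun{X}$. Iterating this produces logarithmic factors, which accounts for the $\lengthfun{N}^M$. The key lattice estimates are
\begin{align}
\sum_{\mu\in C_N}|\mu|^{-k}\ll\sum_{a\neq0}\sum_{|b|\geq N|a|}|b|^{-k}\ll\sum_{a\ge1}(Na)^{1-k}\ll N^{1-k}\,,
\end{align}
which gives $N^{-2}$ already for $k\ge3$, together with a version with weight $|\lambda_i|^{-k_i}|\lambda_{i+1}|^{-k_{i+1}}$ in which $\lambda_i-\lambda_{i+1}\in C_N$.

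\textbf{Step 3: the main obstacle.} The hard case is a disagreement at a position $i\ge2$, where $k_i=2$ and $k_{i+1}=2$. Here the points of $C_N$ carry only exponent $2$, and the crude bound is only $N^{-1}$. We must gain an extra factor $N^{-1}$. First, the absolute convergence is supplied by $\lambda_1$ with $k_1\ge3$. Second, cancellation comes from the symmetry $b\mapsto -b$ of $C_N$. Concretely, the discrepancy set at position $i$ consists of pairs with $a>0$, $b\le -Na$ counted in one sum, and $a<0$, $b\ge N|a|$ counted in the other. These are interchanged by $\mu\mapsto-\mu$, so the difference becomes
\begin{align}
\sum_{\mu\in C_N^+}\bigl(F(\lambda_{i+1}+\mu)-F(\lambda_{i+1}-\mu)\bigr)
\end{align}
for a suitable smooth summand $F$. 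I would therefore pair each term with its reflected counterpart and Taylor expand in $\lambda_{i+1}/\mu$. The leading terms cancel, leaving $|\mu|^{-3}$ decay, and the sum over $C_N$ then gives $O(N^{-2})$ up to the logarithmic factors from the inner sums. Making this pairing compatible with the other order constraints is the delicate point. The reflection $\mu\mapsto-\mu$ swaps $\lambda_i\succ\lambda_{i+1}$ with $\lambda_i\prec\lambda_{i+1}$, so the paired configuration must be reinterpreted as a shuffle-type rearrangement of the chain. Boundary effects from this rearrangement must be bounded separately, again by Step 2 estimates. Local uniformity in $\tau$ follows since all implied constants depend continuously on $\operatorname{Im}\tau$ and $|\tau|$.
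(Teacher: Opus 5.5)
Your proposal has a genuine gap in Steps~2--3, and it is larger than the ``delicate point'' you flag. First, your split into easy and hard disagreements is wrong. A disagreement next to an entry $2$ is already of exact order $N^{-1}$, even when the other entry is $\geq3$. For $\kk=(3,2)$, take $\lambda_1=\tau$ and $\lambda_2=b\in\ZZ$ with $b>N$. These terms satisfy $\lambda_1\succ\lambda_2\succ0$ and $\lambda_2\ordN0$, but $\lambda_2\ordN\lambda_1$, since $H_N(b)=b>N=H_N(\tau)$. So they disagree only at position $1$, where $k_1=3$, and already $|\tau|^{-3}\sum_{b>N}b^{-2}\gg N^{-1}$. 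Hence no absolute-value bound over that disagreement set can give $O(N^{-2})$. Your weighted estimate with $|\lambda_i|^{-k_i}|\lambda_{i+1}|^{-k_{i+1}}$ only produces $|\mu|^{-3}$ decay when \emph{both} exponents are $\geq3$. With one exponent $2$, the region $\lambda_{i+1}\approx-\mu$ leaves $\sum_{\mu\in C_N}|\mu|^{-2}$, which is not even finite once the other order constraints are dropped as in your Step~2. A position-by-position telescoping has the same defect: it produces chains of $2$'s bounded on one side only, e.g.\ $\sum_{\lambda_2\in A_N}|\lambda_2|^{-2}=\infty$. The only discrepancies that can be bounded absolutely are those where two consecutive entries $\geq3$ (with $0$ at the end) are ordered differently.

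Second, the cancellation step is precisely what is not carried out. After $\mu\mapsto-\mu$, the reflected tuple violates the remaining order constraints. Your $F$ contains the nested sums over the other $\lambda_j$, whose ranges depend on $\lambda_i$, so $F$ is not a smooth function you can Taylor expand. The expansion in $\lambda_{i+1}/\mu$ also says nothing where neighbouring variables have size $\gtrsim|\mu|\gtrsim N$.

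The paper removes these problems with a structural reduction you are missing. Write $\kk=(a_1,\{2\}^{b_1},\dots,a_s,\{2\}^{b_s})$ with $a_i\geq3$, and telescope over blocks, not positions. Then each block of $2$'s is summed entirely in one order between two anchors $\mu_i,\mu_{i+1}$. There are two cases.
\begin{itemize}
\item If the anchors are ordered differently by $\ordN$ and $\succ$, one block sum vanishes and the other is $\ll\LL(|\mu_i|+|\mu_{i+1}|)^{b_i}$. The anchor weights, with both exponents $\geq3$, then give $\LL(N)^M/N^2$ without any cancellation.
\item If the anchors are ordered the same way, the harmonic-product identity $rG_{\{2\}^r}=\sum_j(-1)^{j-1}G_{\{2\}^{r-j}}G_{2j}$ reduces the block difference to depth-one differences. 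Each of these equals $\sum_\lambda\chi_N(\lambda)\bigl(f_j(\mu_i,\lambda)-f_j(\mu_{i+1},\lambda)\bigr)$, because a strip is a difference of two translated half-planes, justified with symmetric truncation.
\end{itemize}
The second case is where your cancellation actually lives; at depth one your reflection is equivalent to this, via the oddness of $\chi_N$. One then sums over the anchors first, for fixed $\lambda$ (\cref{lem:key estim1}). This gives $\LL(|\lambda|)^{c}|\lambda|^{-3}$, including the region $|\mu_i|\gtrsim|\lambda|$, and \cref{lem:chiN} then gives $\LL(N)^M/N^2$. Without the block decomposition and the reduction to power sums $G_{2j}$, your Step~3 has no working mechanism once there are several consecutive $2$'s or a $2$ adjacent to an anchor.
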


We first give some direct consequences of \cref{thm:first-order}.

\begin{cor}\label{cor:absolute-limit}
Let $k_1\geq3$ and $k_2,\dots,k_r\geq2$, and put $k=k_1+\cdots+k_r$.  Then, locally uniformly for $\tau\in\HH$,
\begin{align}\label{eq:absolute-limit}
\tau_N^k
G_{k_1,\dots,k_r}\left(\tau_N\right)
\longrightarrow
G_{k_1,\dots,k_r}(\tau)\,.
\end{align}
\end{cor}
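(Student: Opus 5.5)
This is a direct combination of \cref{lem:coordinate-change} and \cref{thm:first-order}; no new analytic input is needed.

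First, since $k_1\geq3$ and $k_2,\dots,k_r\geq2$, \cref{lem:coordinate-change} applies and gives, for every $N\geq1$ and every $\tau\in\HH$, the exact identity
\begin{align}
\tau_N^k\,G_{k_1,\dots,k_r}(\tau_N)=G^{(N)}_{k_1,\dots,k_r}(\tau)\,.
\end{align}
The convergence statement \eqref{eq:absolute-limit} is therefore equivalent to
\begin{align}
G^{(N)}_{k_1,\dots,k_r}(\tau)-G_{k_1,\dots,k_r}(\tau)\longrightarrow0
\end{align}
locally uniformly in $\tau$.

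Next, I invoke \cref{thm:first-order} for the same index. It gives an exponent $M\geq0$ such that, for every compact $K\subset\HH$, there is a constant $C_K$ with
\begin{align}
\left|G^{(N)}_{k_1,\dots,k_r}(\tau)-G_{k_1,\dots,k_r}(\tau)\right|\leq C_K\,\frac{\lengthfun{N}^M}{N^2}
\qquad(\tau\in K,\ N\geq1)\,.
\end{align}
Since $\lengthfun{N}=1+\log(2+N)$ grows only logarithmically, we have $\lengthfun{N}^M/N^2\to0$ as $N\to\infty$ for any fixed $M$. Hence the difference tends to zero uniformly on $K$, which proves \eqref{eq:absolute-limit}.

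There is no real obstacle here; the whole analytic difficulty lies in \cref{thm:first-order}. The only points to check are that the index satisfies the hypotheses of both results, which is exactly $k_1\geq3$, and that the implied constant in \cref{thm:first-order} is uniform on compacta, which is what ``locally uniformly'' means there.
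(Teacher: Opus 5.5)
Your proposal is correct and matches the paper's proof: the exact identity of \cref{lem:coordinate-change} turns the left-hand side into $G^{(N)}_{k_1,\dots,k_r}(\tau)$, and \cref{thm:first-order} then gives convergence to $G_{k_1,\dots,k_r}(\tau)$ uniformly on compacta, since $\lengthfun{N}^M/N^2\to0$.
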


\begin{proof}
By \eqref{eq:exact-moving-identity}, the left-hand side of \eqref{eq:absolute-limit} equals $G^{(N)}_{k_1,\dots,k_r}(\tau)$, and the statement follows from \cref{thm:first-order}.
\end{proof}

\begin{cor}\label{cor:Hadm-limit}
If $a\in\hadm$ is homogeneous, then for some exponent $M$,
\begin{align}\label{eq:first-order}
G^{(N)}(a)-G(a)
=
O\left(\frac{\lengthfun{N}^M}{N^2}\right)
=o(N^{-1})
\end{align}
locally uniformly for $\tau\in\HH$.
\end{cor}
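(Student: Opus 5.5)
By linearity I will reduce to basis words. Let $a\in\hadm$ be homogeneous of weight $k$. Write it as a finite $\QQ$-linear combination $a=\sum_i c_i w_i$, where each $w_i$ is either $1$ (only if $k=0$) or an admissible word $z_{k_1}\cdots z_{k_r}$ with $k_1\geq3$, $k_2,\dots,k_r\geq2$ and $k_1+\cdots+k_r=k$. Both $G$ and $G^{(N)}$ are $\QQ$-linear on homogeneous elements of a fixed weight $k$: by \eqref{eq:def-SN}, $G^{(N)}(w)=\tau_N^kG(w)(\tau_N)$ with the same factor $\tau_N^k$ for every $w_i$. Hence
\begin{align}
G^{(N)}(a)-G(a)=\sum_i c_i\bigl(G^{(N)}(w_i)-G(w_i)\bigr)\,.
\end{align}

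Next I treat the individual terms. For $w_i=1$ we have $G^{(N)}(1)=\tau_N^0\cdot1=1=G(1)$, so the difference vanishes. For an admissible word, \cref{lem:coordinate-change} gives
\begin{align}
G^{(N)}(w_i)=\tau_N^kG_{k_1,\dots,k_r}(\tau_N)=G^{(N)}_{k_1,\dots,k_r}(\tau)\,,
\end{align}
where the right-hand side is the moving-order sum \eqref{eq:moving-G}. \Cref{thm:first-order} then gives an exponent $M_i$ with $G^{(N)}(w_i)-G(w_i)=O(\lengthfun{N}^{M_i}/N^2)$, locally uniformly in $\tau$.

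Now take $M=\max_i M_i$; there are finitely many terms. Since $\lengthfun{N}\geq1$, we have $\lengthfun{N}^{M_i}\leq\lengthfun{N}^M$. A finite linear combination of bounds that hold locally uniformly on compact subsets is again locally uniform, so
\begin{align}
G^{(N)}(a)-G(a)=O\bigl(\lengthfun{N}^M/N^2\bigr)\,.
\end{align}
Finally, $\lengthfun{N}^M/N=(1+\log(2+N))^M/N\to0$, so the bound is $o(N^{-1})$.

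There is no real obstacle here. The one point to watch is that the weight factor $\tau_N^k$ must be common to all terms, which holds because $a$ is homogeneous. All the analytic work is contained in \cref{thm:first-order}.
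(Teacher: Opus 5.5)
Your proof is correct and follows the paper's own argument: reduce by linearity to words, use \cref{lem:coordinate-change} to identify $G^{(N)}$ of an admissible word with the moving-order sum $G^{(N)}_{k_1,\dots,k_r}$ and apply \cref{thm:first-order}, with the empty word being trivial. Your remarks about taking the maximal exponent $M$ and about the common factor $\tau_N^k$ make explicit two points the paper leaves implicit.
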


\begin{proof}
For a nonempty word this follows from \eqref{eq:exact-moving-identity} and \cref{thm:first-order}, for the empty word it is clear, and the general case follows by linearity.
\end{proof}

\section{Consequences: the weight grading and
\texorpdfstring{$\mathsf{qMZV}$}{qMZV}}
\label{sec:weight}

In this section, we want to prove \cref{thm:main-complex}. By \cref{cor:Hadm-limit}, the first-order term in $G^{(N)}(w)$ vanishes for $w\in\hadm$, and we will see that the only first-order contribution comes from $G_2$. Writing $E_2=1-24\sum_{n\geq1}\sigma_1(n)q^n$, the Eisenstein summation in \eqref{eq:def-mes} gives $G_2=\frac{\pi^2}{6}E_2$, and the classical transformation law for $E_2$ (see \cite{Za}) becomes
\begin{align}\label{eq:G2-transformation}
G_2\left(-\frac1z\right)=z^2G_2(z)-\pi iz\,.
\end{align}
Since $G_2(\tau-N)=G_2(\tau)$, we obtain
\begin{align}\label{eq:SN-z2}
G^{(N)}(z_2)(\tau)
=G_2(\tau)-\frac{\pi i}{z_N}\,.
\end{align}

\begin{prop}\label{prop:two-term}
If $w\in\ha^{\geq2}$ is homogeneous of weight $k$, then there is an exponent $M$ such that
\begin{align}\label{eq:two-term}
G^{(N)}(w)(\tau)
=G(w)(\tau)
+\frac{2\pi i}{z_N}G(\bar\delta w)(\tau)
+O\left(\frac{\lengthfun{N}^M}{N^2}\right)
\end{align}
locally uniformly for $\tau\in\HH$.
\end{prop}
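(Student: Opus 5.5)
We reduce \cref{prop:two-term} to a polynomial computation in $z_2$ using \cref{prop:polynomial-regularization}. By linearity and homogeneity we may assume
\begin{align}
w=\sum_{j=0}^d a_j\ast z_2^{\ast j},
\end{align}
where each $a_j\in\hadm$ is homogeneous of weight $k-2j$. This uses the fact that the isomorphism \eqref{eq:H-polynomial} is weight-graded, so the decomposition of a homogeneous element has homogeneous coefficients of the correct weights.

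Since $G^{(N)}$ is multiplicative on homogeneous elements by \eqref{eq:SN-multiplicative}, we obtain
\begin{align}
G^{(N)}(w)=\sum_{j=0}^d G^{(N)}(a_j)\,G^{(N)}(z_2)^j .
\end{align}
This holds because each summand $a_j\ast z_2^{\ast j}$ is a product of homogeneous factors. On the right-hand side we insert two ingredients. The first is \cref{cor:Hadm-limit}, which gives
\begin{align}
G^{(N)}(a_j)=G(a_j)+O\bigl(\lengthfun{N}^{M_j}/N^2\bigr).
\end{align}
The second is \eqref{eq:SN-z2}, which gives
\begin{align}
G^{(N)}(z_2)=G_2-\pi i/z_N .
\end{align}

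Next we expand $(G_2-\pi i/z_N)^j$ binomially:
\begin{align}
(G_2-\pi i/z_N)^j=G_2^j-\frac{j\pi i}{z_N}G_2^{j-1}+O(|z_N|^{-2}).
\end{align}
Locally uniformly in $\tau$ we have $|z_N|=|\tau-N|\asymp N$, and $G_2$ and the $G(a_j)$ are bounded on compacta. Hence every cross term and every higher power of $1/z_N$ is absorbed into $O(\lengthfun{N}^M/N^2)$ with $M=\max_j M_j$. What survives is
\begin{align}
G^{(N)}(w)=\sum_j G(a_j)G_2^j-\frac{\pi i}{z_N}\sum_{j\ge1} jG(a_j)G_2^{j-1}+O\bigl(\lengthfun{N}^M/N^2\bigr).
\end{align}

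It remains to identify the two surviving sums. Since $G$ is an algebra homomorphism, the first sum equals $G(w)$. For the second, \eqref{eq:delta-decomposition} gives
\begin{align}
G(\bar\delta w)=-\tfrac12\sum_{j\ge1} jG(a_j)G_2^{j-1},
\end{align}
so that
\begin{align}
-\pi i\sum_{j\ge1} jG(a_j)G_2^{j-1}=2\pi i\,G(\bar\delta w).
\end{align}
This is exactly the middle term in \eqref{eq:two-term}.

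The only point needing care is the uniformity of the error terms. On a compact set $K\subset\HH$ we have $|z_N|\ge N-\sup_K|\tau|$, so $1/|z_N|=O(1/N)$ uniformly on $K$, and the remaining factors are continuous and hence bounded on $K$. There is also no circularity: nowhere do we use that $G(w)$ determines the $a_j$. The computation is carried out in the formal algebra, and $G$ is applied only at the end.
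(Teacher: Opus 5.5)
Your proposal is correct and follows essentially the same route as the paper. You decompose $w=\sum_j a_j\ast z_2^{\ast j}$ with $a_j\in\hadm_{k-2j}$ via \cref{prop:polynomial-regularization}, use the multiplicativity of $G^{(N)}$ together with \eqref{eq:SN-z2} and \cref{cor:Hadm-limit}, expand to first order in $1/z_N$, and identify the linear term with $2\pi i\,G(\bar\delta w)$ via \eqref{eq:delta-decomposition}.
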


\begin{proof}
By \cref{prop:polynomial-regularization} we can write
\begin{align}
w=\sum_{j=0}^d a_j\ast z_2^{\ast j},
\qquad
a_j\in\hadm_{k-2j}\,,
\end{align}
and by \eqref{eq:SN-multiplicative} and \eqref{eq:SN-z2} we get
\begin{align}\label{eq:SN-polynomial}
G^{(N)}(w)
=\sum_{j=0}^d
G^{(N)}(a_j)
\left(G_2-\frac{\pi i}{z_N}\right)^j\,.
\end{align}
Using \cref{cor:Hadm-limit} in \eqref{eq:SN-polynomial}, we obtain
\begin{align}
G^{(N)}(w)
&=
\sum_{j=0}^d
\left(G(a_j)+O(N^{-2}\lengthfun{N}^M)\right)
\left(G_2-\frac{\pi i}{z_N}\right)^j\\
&=
G(w)
-\frac{\pi i}{z_N}
\sum_{j=1}^d jG(a_j)G_2^{j-1}
+O(N^{-2}\lengthfun{N}^{M'})\,.
\end{align}
Since by \eqref{eq:delta-decomposition} we have
\begin{align}
G(\bar\delta w)
=-\frac12\sum_{j=1}^d jG(a_j)G_2^{j-1}\,,
\end{align}
this gives \eqref{eq:two-term}.
\end{proof}

\begin{cor}\label{cor:full-limit}
For every homogeneous $w\in\ha^{\geq2}$, we have
\begin{align}\label{eq:full-limit}
G^{(N)}(w)(\tau)\longrightarrow G(w)(\tau)
\end{align}
locally uniformly for $\tau\in\HH$.
\end{cor}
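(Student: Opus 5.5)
This follows directly from \cref{prop:two-term}. Fix a homogeneous $w\in\ha^{\geq2}$ of weight $k$. \Cref{prop:two-term} gives an exponent $M$ such that
\begin{align}
G^{(N)}(w)(\tau)-G(w)(\tau)
=\frac{2\pi i}{z_N}G(\bar\delta w)(\tau)
+O\left(\frac{\lengthfun{N}^M}{N^2}\right)
\end{align}
locally uniformly in $\tau$. It therefore suffices to show that both terms on the right tend to $0$ locally uniformly.

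For the error term, note that $\lengthfun{N}^M=(1+\log(2+N))^M$ grows more slowly than any positive power of $N$. Hence $\lengthfun{N}^M/N^2\to0$, and since the implied constant is uniform on compact sets, this term vanishes locally uniformly.

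For the main term, take a compact set $K\subset\HH$. The function $G(\bar\delta w)$ lies in $\mes$, so it is a finite $\QQ$-linear combination of multiple Eisenstein series. Each of these is holomorphic on $\HH$, because $G$ maps into $\mathcal{O}(\HH)$; for words starting with $z_2$ this uses the polynomial description $\mes=\mesadm[G_2]$ and the holomorphy of $G_2$. Hence $G(\bar\delta w)$ is bounded on $K$. On the other hand, $|z_N|=|\tau-N|\geq N-|\tau|\geq N-C_K$ for $\tau\in K$, where $C_K=\max_K|\tau|$. So $1/z_N=O(1/N)$ uniformly on $K$. The product is therefore $O(1/N)$ uniformly on $K$, and the claim follows.

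There is no real obstacle here: all the analytic difficulty sits in \cref{thm:first-order}, which enters through \cref{cor:Hadm-limit} and \cref{prop:two-term}. The only points to check are local boundedness of $G(\bar\delta w)$ and the uniform bound $|z_N|^{-1}\ll N^{-1}$ on compact sets.
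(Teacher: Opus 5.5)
Your proposal is correct and follows the paper's own proof: both deduce the limit from \cref{prop:two-term} using $|z_N|\asymp N$ locally uniformly, the local boundedness of $G(\bar\delta w)$, and $\lengthfun{N}^M/N^2\to0$. You spell out the local boundedness of $G(\bar\delta w)$ and the lower bound on $|z_N|$, which the paper leaves implicit.
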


\begin{proof}
This follows directly from \cref{prop:two-term}, since $|z_N|\asymp N$ locally uniformly and $\lengthfun{N}^M/N^2\to0$.
\end{proof}

\begin{proof}[Proof of \cref{thm:main-complex} \textup{(i)}]
Suppose that
\begin{align}\label{eq:mixed-relation}
F_0+\cdots+F_{k_0}=0,
\qquad F_k\in\mes_k^\CC\,,
\end{align}
and choose homogeneous elements $w_k\in\ha^{\geq2}_k\otimes_\QQ\CC$ with $G(w_k)=F_k$, where here and in the following we extend $G$ and $G^{(N)}$ $\CC$-linearly. Notice that all the convergence statements above also hold for these elements by linearity. Take $k_0$ maximal such that $F_{k_0}\neq0$. Evaluating \eqref{eq:mixed-relation} at $\tau_N=-1/z_N$ and multiplying by $\tau_N^{k_0}$ gives by the definition of $G^{(N)}$
\begin{align}
0
=G^{(N)}(w_{k_0})(\tau)
+
\sum_{k=0}^{k_0-1}
\tau_N^{k_0-k}G^{(N)}(w_k)(\tau)\,.
\end{align}
By \cref{cor:full-limit} all the functions $G^{(N)}(w_k)$ are locally bounded and the first term converges to $G(w_{k_0})(\tau)=F_{k_0}(\tau)$, while all the other terms tend to zero as $N\to\infty$. So $F_{k_0}(\tau)=0$ for all $\tau\in\HH$, which is a contradiction, and repeating this argument shows that every term in \eqref{eq:mixed-relation} vanishes. This proves the direct sum decomposition of $\mes^\CC$, and restricting to the rational spans gives the one for $\mes$. Since the harmonic product is homogeneous, we have $\mes_k\mes_\ell\subset\mes_{k+\ell}$, and therefore $W$ is a well-defined derivation.
\end{proof}

\subsection{Comparison with the weight filtration on
\texorpdfstring{$\mathsf{qMZV}$}{qMZV}}

We extend the assignment
\begin{align}
g:\ha^{\geq2}&\longrightarrow\mathsf{qMZV},\\
z_{k_1}\cdots z_{k_r}
&\longmapsto g_{k_1,\dots,k_r}
\end{align}
$\QQ$-linearly and set $g(1)=1$. To compare the Fourier expansions of $G_{k_1,\dots,k_r}$ and $g_{k_1,\dots,k_r}$, we put for $k\geq-1$
\begin{align}
V_k&\coloneqq\operatorname{Fil}^W_k\mathsf{qMZV},&
V_k^\CC&\coloneqq\Span_\CC(V_k)\subset\CC[[q]]\,.
\end{align}
If $w$ is homogeneous of weight $k$, then we have
\begin{align}\label{eq:triangular-Fourier}
(-2\pi i)^{-k}G(w)-g(w)
\in V_{k-1}^\CC\,.
\end{align}
This follows from \cite[Proposition~2.5]{BT} together with \cite[Lemma~2.4]{BT}, which shows that for $k_1,\dots,k_r\geq2$ every $g$ with an entry equal to $1$ cancels in the Fourier expansion of $G_{k_1,\dots,k_r}$ (see the proof of \cite[Theorem~1.1]{BT}). So the lower-weight part lies in $V_{k-1}^\CC$, which proves \eqref{eq:triangular-Fourier}. By induction on the weight we also obtain that $g(w)$ differs from $(-2\pi i)^{-k}G(w)$ by a complex linear combination of multiple Eisenstein series of weight smaller than $k$, which for indices with all entries at least $2$ is also the comparison given in \cite[Section~2.5]{HST}.

We will also need that for every $k$ the scalar extension gives an isomorphism
\begin{align}\label{eq:qMZV-rational-structure}
V_k\otimes_\QQ\CC
\xrightarrow{\,\sim\,}
V_k^\CC\,.
\end{align}
To see this, notice that $V_k$ is finite-dimensional and spanned by series with rational Fourier coefficients. After choosing a $\QQ$-basis of $V_k$, some finite set of Fourier coefficients gives a rational matrix of full column rank, and its rank does not change after extending scalars to $\CC$. Passing to the quotient gives an isomorphism
\begin{align}\label{eq:qMZV-graded-rational-structure}
\operatorname{gr}^W_k\mathsf{qMZV}\otimes_\QQ\CC
\xrightarrow{\,\sim\,}
V_k^\CC/V_{k-1}^\CC\,.
\end{align}
In particular,
\begin{align}\label{eq:qMZV-rational-intersection}
V_k\cap V_{k-1}^\CC=V_{k-1}\,.
\end{align}

\begin{proof}[Proof of \cref{thm:main-complex} \textup{(ii)}]
We set $\Phi_0(1)=1$ and define for $k\geq1$ and  $w\in\ha^{\geq2}_k$
\begin{align}
\Phi_k(G(w))
\coloneqq
g(w)
\bmod\operatorname{Fil}^W_{k-1}\mathsf{qMZV}\,.
\end{align}
If $G(w)=0$, then \eqref{eq:triangular-Fourier} shows that $g(w)\in V_{k-1}^\CC$, and since also $g(w)\in V_k$, we get $g(w)\in V_{k-1}$ by \eqref{eq:qMZV-rational-intersection}. This shows that $\Phi_k$ is well-defined, and it is surjective by the definition of the weight filtration. For the injectivity, suppose that $\Phi_k(G(w))=0$, i.e. $g(w)$ is a linear combination of elements of weight smaller than $k$. Writing each of these elements as a combination of multiple Eisenstein series as above, we obtain on the level of Fourier expansions
\begin{align}
G(w)\in\sum_{\ell<k}\mes_\ell^\CC\,.
\end{align}
Since all series involved converge for $|q|<1$, this inclusion also holds for the corresponding holomorphic functions on $\HH$. But $G(w)\in\mes_k^\CC$ and therefore the direct sum decomposition in part~\textup{(i)} gives $G(w)=0$, which shows that $\Phi_k$ is injective. It remains to show that $\Phi$ is compatible with the products. If $u$ and $v$ are homogeneous of weights $k$ and $\ell$, then multiplying their expressions \eqref{eq:triangular-Fourier} and using $G(u)G(v)=G(u\ast v)$ gives
\begin{align}
g(u)g(v)-g(u\ast v)
\in
V_{k+\ell-1}^\CC\,.
\end{align}
By \eqref{eq:qMZV-filtered-product} the left-hand side lies in $V_{k+\ell}$ and its image in $V_{k+\ell}^\CC/V_{k+\ell-1}^\CC$ vanishes, so by the injectivity of \eqref{eq:qMZV-graded-rational-structure} it already lies in $V_{k+\ell-1}$. This shows that the $\Phi_k$ give an isomorphism of graded $\QQ$-algebras
\begin{align}
\Phi:\mes\xrightarrow{\sim}\operatorname{gr}^W\mathsf{qMZV}\,.
\end{align}

Finally, let $F_1,\dots,F_d$ be a $\QQ$-basis of $\mes_k$. Then their images under $\Phi_k$ form a $\QQ$-basis of $\operatorname{gr}^W_k\mathsf{qMZV}$, which is also linearly independent over $\CC$ by \eqref{eq:qMZV-graded-rational-structure}. If $\sum_jc_jF_j=0$ with $c_j\in\CC$, then \eqref{eq:triangular-Fourier} gives
\begin{align}
\sum_jc_j\Phi_k(F_j)=0
\end{align}
after extending scalars to $\CC$, and therefore all $c_j$ vanish. This shows that the natural surjection $\mes_k\otimes_\QQ\CC\to\mes_k^\CC$ is injective, and taking the direct sum over all $k$ gives $\mes\otimes_\QQ\CC\cong\mes^\CC$.
\end{proof}

\section{The lowering operator and the
\texorpdfstring{$\mathfrak{sl}_2$}{sl2}-structure}
\label{sec:delta}

In this section, we want to complete the proof of \cref{thm:main-sl2}. For this, we combine \cref{thm:main-complex} \textup{(i)} with the two-term expansion in \cref{prop:two-term} to verify the criterion in \cref{prop:delta-equivalences}.

\begin{proof}[Proof of \cref{thm:main-sl2}]
By \cref{thm:main-complex} \textup{(i)}, the kernel of $G$ is spanned by homogeneous elements, so let $w\in\ker(G)$ be homogeneous. Then $G^{(N)}(w)=0$ for every $N$, and multiplying \eqref{eq:two-term} by $z_N$ and taking the limit $N\to\infty$ gives, since $|z_N|\asymp N$,
\begin{align}
G(\bar\delta w)=0\,.
\end{align}
So $\bar\delta(\ker(G))\subset\ker(G)$ and $\bar\delta$ induces a well-defined derivation $\delta$ on $\mes$. The well-definedness of $W$ is equivalent to $\ker(G)$ being homogeneous, and the one of $D$ was shown in \cite[Main Theorem~D]{BKM}, which proves (i). For the commutator relations, let $w\in\ha^{\geq2}$ be an arbitrary homogeneous element and let
\begin{align}
\pi^{\mathrm f}:\ha^{\geq2}_\ast&\twoheadrightarrow\mes^{\mathrm f},\\
z_{k_1}\cdots z_{k_r}
&\longmapsto G^{\mathrm f}(k_1,\dots,k_r)
\end{align}
be the canonical surjective algebra homomorphism with $\pi^{\mathrm f}(1)=1$. For the map $\rho$ in \eqref{eq:formal-realization} we then have
\begin{align}\label{eq:formal-analytic-realization}
\rho\circ\pi^{\mathrm f}=G\,.
\end{align}

The algebra $\mathcal{G}^{\mathrm f}$ is equipped with the $\sltwo$-triple $(W^{\mathrm f},D^{\mathrm f},\delta^{\mathrm f})$ constructed in \cite[Theorem~4.12]{BIM}, and by the explicit formula in \cite[Equation~(4.7), in the proof of Proposition~4.15]{BIM}, its weight and lowering operators satisfy on the image of $\ha^{\geq2}$
\begin{align}\label{eq:formal-weight-lowering}
W^{\mathrm f}\pi^{\mathrm f}(w)
=\pi^{\mathrm f}(\bar Ww),
\qquad
\delta^{\mathrm f}\pi^{\mathrm f}(w)
=\pi^{\mathrm f}(\bar\delta w)\,,
\end{align}
where $\bar{W}(w)=\wt(w)w$. By \cite[Main Theorem~D]{BKM}, there is a homogeneous map $\theta:\ha^{\geq2}\to\ha^{\geq2}$ of weight $2$ such that
\begin{align}\label{eq:theta-realization}
D^{\mathrm f}\pi^{\mathrm f}(w)
=\pi^{\mathrm f}(\theta w),
\qquad
2\pi i\frac{d}{d\tau}G(w)
=G(\theta w)\,.
\end{align}
The commutator relation for the formal operators restricted to $\mes^{\mathrm f}$ therefore gives
\begin{align}
\pi^{\mathrm f}
\bigl(([\bar\delta,\theta]-\bar W)w\bigr)
&=
\bigl([\delta^{\mathrm f},D^{\mathrm f}]-W^{\mathrm f}\bigr)
\pi^{\mathrm f}(w)
=0\,.
\end{align}
Applying $\rho$ and using \eqref{eq:formal-analytic-realization}, we get
\begin{align}\label{eq:realized-commutator}
G([\bar\delta,\theta]w)=G(\bar Ww)\,,
\end{align}
and therefore
\begin{align}
[\delta,D]G(w)
=G([\bar\delta,\theta]w)
=G(\bar Ww)
=WG(w)\,.
\end{align}
Since the elements $G(w)$ span $\mes$, this proves $[\delta,D]=W$ on $\mes$, and the other two commutator relations in \eqref{eq:sl2-relations} follow since $D$ raises the weight by $2$ and $\delta$ lowers the weight by $2$. This proves (ii). Notice that \eqref{eq:formal-analytic-realization}--\eqref{eq:theta-realization} also show that $\rho$ commutes with all three operators, and together with \cite[Main Theorem~C]{BKM} this shows that $\rho:\mes^{\mathrm f}\twoheadrightarrow\mes$ is a surjective morphism of $\sltwo$-algebras, where no injectivity is needed. Part~\textup{(iii)} follows from \cref{prop:delta-equivalences}.
\end{proof}

\section{Proof of the first-order estimate}
\label{sec:proof-first-order}
In this section, we give the proof of \cref{thm:first-order}. Notice that we will not use any result from \Cref{sec:weight,sec:delta} here. First, we fix some notation.

\begin{enumerate}[(i)]
\item We denote by $\succ_\infty=\succ$ the total order defined in \eqref{eq:mes order} and denote $G^{(\infty)}_\kk(\tau)=G_\kk(\tau)$.
\item Let $X$ be a topological set. For functions $f:X\rightarrow \CC$, $g:X\rightarrow\RR_{\geq0}$ and $S\subset X$,
\begin{align}
&f(x)\ll g(x)\quad\text{for $x\in S$}\\
&\equivdef\text{There exists a real number $C>0$ such that for any $x\in S$, $|f(x)|\leq Cg(x)$.}
\end{align}
\item The constant $C$ above is called the \emph{implicit constant} or \emph{implied constant}.
\item If the implicit constant depends on other variables $a_1,\dots,a_n$, we write $f(x)\ll_{a_1,\dots,a_n}g(x)$ to indicate this dependence.
\end{enumerate}

For $\nu\in\ZZ_{>0}\cup\{\infty\}$, $\mu_1,\mu_2\in\Lambda_\tau$ and $k_1,\dots,k_r\geq2$, we define the truncated version of multiple Eisenstein series by
\begin{align}
G^{(\nu;\mu_1,\mu_2)}_{k_1,\dots,k_r}(\tau)\coloneqq\begin{cases}\summ{\lambda_1,\dots,\lambda_r\in\latzero\\\mu_1\succ_\nu\lambda_1\succ_\nu\cdots\succ_\nu\lambda_r\succ_\nu\mu_2}\frac{1}{\lambda_1^{k_1}\cdots\lambda_r^{k_r}}&r\geq1,\\\ind_{\mu_1\succ_\nu\mu_2}&r=0.\end{cases}
\end{align}
The series in the right-hand side converges absolutely and uniformly for any compact subset of $\HH$. (\cref{lem:trunc conv})

\subsection{Estimates for various Eisenstein type series}

In this subsection, we fix a compact subset $C\subset\HH$ and a real number $m\geq0$.

\begin{lem}\label{lem:single eis1}
Let $\nu\in\ZZ_{>0}\cup\{\infty\}$ and $k\geq2$. For $\tau\in C$ and lattice points $\mu_1,\mu_2\in\Lambda_\tau$ with $\mu_1\succ_\nu\mu_2$, we have
\begin{align}
\summ{\lambda\in\latzero\\\mu_1\succ_\nu\lambda\succ_\nu\mu_2}\frac{\LL(|\lambda|)^m}{|\lambda|^k}\ll_{C,k,m}\begin{cases}\LL(|\mu_1|+|\mu_2|)^{m+1}&k=2,\\1&k\geq3.\end{cases}
\end{align}
\end{lem}

\begin{proof}
It is known that the series $\textstyle\summ{\lambda\in\latzero}\frac{\LL(|\lambda|)^m}{|\lambda|^k}$ converges uniformly on $\tau\in C$ for the case $k\geq3$ (cf. \cite[p.8 Exercise 1.1.4]{DS}). For the case $k=2$, we divide the sum as follows:
\begin{align}\label{eq:two adic}
\summ{\lambda\in\latzero\\\mu_1\succ_\nu\lambda\succ_\nu\mu_2}\frac{\LL(|\lambda|)^m}{|\lambda|^2}=\summ{\mu_1\succ_\nu\lambda\succ_\nu\mu_2\\0<|\lambda|<1}\frac{\LL(|\lambda|)^m}{|\lambda|^2}
+\sum_{j=0}^\infty\summ{\mu_1\succ_\nu\lambda\succ_\nu\mu_2\\2^j\leq|\lambda|<2^{j+1}}\frac{\LL(|\lambda|)^m}{|\lambda|^2}\,.
\end{align}
Since $C$ is compact, $m_C\coloneqq\inf_{\tau\in C}\min_{\lambda\in\latzero}|\lambda|$ exists and $m_C>0$. Furthermore, for any $\tau\in C$ and any distinct lattice points $\lambda,\lambda^\prime\in\Lambda_\tau$, their distance satisfies $|\lambda-\lambda^\prime|\geq m_C$. Now, we give an estimate for the first term. We have
\begin{align}
\summ{\mu_1\succ_\nu\lambda\succ_\nu\mu_2\\0<|\lambda|<1}\frac{\LL(|\lambda|)^m}{|\lambda|^2}\leq\summ{\mu_1\succ_\nu\lambda\succ_\nu\mu_2\\m_C\leq|\lambda|<1}\frac{\LL(1)^m}{m_C^2}\,.
\end{align}
The number of lattice points in the running indices can be estimated as follows
\begin{align}
\#\{\lambda\in\Lambda_\tau\mid m_C\leq|\lambda|<1\}\cdot\pi\left(\frac{m_C}{4}\right)^2\leq\pi\left(1+\frac{m_C}{4}\right)^2\,.
\end{align}
Therefore, the first term of \eqref{eq:two adic} is bounded by a constant that does not depend on $\tau\in C$. For the second term, similar arguments yield the following:
\begin{align}
\#\{\lambda\in\Lambda_\tau\mid\mu_1\succ_\nu\lambda\succ_\nu\mu_2,R\leq|\lambda|<2R\}\cdot\pi\left(\frac{m_C}{4}\right)^2\ll_C\min\{R^2,(w+1)R+1\}
\end{align}
for any $R>0$, where $w$ is the width between the straight lines $\mu_1+\RR(\nu-\tau)$ and $\mu_2+\RR(\nu-\tau)$ (when $\nu=\infty$, $\mu+\RR(\nu-\tau)$ means the horizontal line $\mu+\RR$). Thus, we have
\begin{align}
\summ{\mu_1\succ_\nu\lambda\succ_\nu\mu_2\\R\leq|\lambda|<2R}\frac{\LL(|\lambda|)^m}{|\lambda|^2}
&\ll_C\min\{R^2,(w+1)R+1\}\frac{\LL(2R)^m}{R^2}\\
&\ll_{C,m}\left(R^2\min\left\{1,\frac{w}{R}\right\}+R+1\right)\frac{\LL(R)^m}{R^2}\\
&\ll_{C,m}\begin{cases}\LL(R)^m&R\leq w,\\\frac{w+1}{R}\LL(R)^m&R>w,\end{cases}
\end{align}
and thus, the second term of \eqref{eq:two adic} can be estimated as follows:
\begin{align}
\sum_{j=0}^\infty\summ{\mu_1\succ_\nu\lambda\succ_\nu\mu_2\\2^j\leq|\lambda|<2^{j+1}}\frac{\LL(|\lambda|)^m}{|\lambda|^2}&=\sum_{j=0}^\infty\summ{\mu_1\succ_\nu\lambda\succ_\nu\mu_2\\2^j\leq|\lambda|<2^{j+1}\\2^j\leq w}\frac{\LL(|\lambda|)^m}{|\lambda|^2}+\sum_{j=0}^\infty\summ{\mu_1\succ_\nu\lambda\succ_\nu\mu_2\\2^j\leq|\lambda|<2^{j+1}\\2^j>w}\frac{\LL(|\lambda|)^m}{|\lambda|^2}\\
&\ll_{C,m}\sum_{0\leq j\leq\log_2w}\LL(2^j)^m+\sum_{j\geq0}\frac{w+1}{2^{j_0+j}}\LL(2^{j_0+j})^m\,,
\end{align}
where $j_0\coloneqq\min\{j\geq0\mid 2^j>w\}$. Since it holds $\LL(2^j)=1+\log(2+2^j)\ll1+j$, $\LL(2^{j_0+j})\leq\LL(2^{j_0})+j$ for any $j\geq0$, and $\frac{w+1}{2^{j_0}}\ll1$ we have
\begin{align}
\sum_{0\leq j\leq\log_2w}\LL(2^j)^m+\sum_{j\geq0}\frac{w+1}{2^{j_0+j}}\LL(2^{j_0+j})^m\ll\sum_{0\leq j\leq\log_2w}(1+j)^m+\sum_{j\geq0}\frac{(\LL(2^{j_0})+j)^m}{2^j}\,.
\end{align}
Now, we have
\begin{align}
\sum_{0\leq j\leq\log_2w}(1+j)^m\leq\sum_{0\leq j\leq\log_2w}(1+\log_2w)^m\leq(1+\log_2w)^{m+1}\ll\LL(w)^{m+1},
\end{align}
and
\begin{align}
\sum_{j\geq0}\frac{(\LL(2^{j_0})+j)^m}{2^j}&\ll_m\sum_{j\geq0}\frac{\LL(2^{j_0})^m+j^m}{2^j}\\
&\ll_m\LL(2^{j_0})^m+1\\
&\ll_m\LL(2^{j_0})^m.
\end{align}
Since it holds $2^{j_0-1}\leq w$ if $w\geq1$ and $2^{j_0}=1$ if $w<1$ by definition of $j_0$, we have $\LL(2^{j_0})\ll\LL(2w)\ll\LL(w)$. Finally, we can estimate the second term as follows
\begin{align}
\sum_{j=0}^\infty\summ{\mu_1\succ_\nu\lambda\succ_\nu\mu_2\\2^j\leq|\lambda|<2^{j+1}}\frac{\LL(|\lambda|)^m}{|\lambda|^2}\ll_{C,m}\LL(w)^{m+1}\leq\LL(|\mu_1|+|\mu_2|)^{m+1}
\end{align}
since the width $w$ is bounded by $w\leq|\mu_1-\mu_2|\leq|\mu_1|+|\mu_2|$.
\end{proof}

\begin{lem}\label{lem:trunc conv}
For any $\nu\in\ZZ_{>0}\cup\{\infty\}$, $\mu_1,\mu_2\in\Lambda_\tau$, $r\geq1$ and $k_1,\dots,k_r\geq2$, the series
\begin{align}
\summ{\lambda_1,\dots,\lambda_r\in\latzero\\\mu_1\succ_\nu\lambda_1\succ_\nu\cdots\succ_\nu\lambda_r\succ_\nu\mu_2}\frac{1}{\lambda_1^{k_1}\cdots\lambda_r^{k_r}}
\end{align}
converges absolutely and uniformly for any compact subset of $\HH$.
\end{lem}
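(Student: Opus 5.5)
We bound the sum of absolute values by a product of one-variable sums, and then use \cref{lem:single eis1} together with uniform convergence. We drop the ordering constraints between consecutive $\lambda_i$ and keep only $\mu_1\succ_\nu\lambda_i\succ_\nu\mu_2$ for each $i$. Since all terms are nonnegative, this gives
\begin{align}
\summ{\lambda_1,\dots,\lambda_r\in\latzero\\\mu_1\succ_\nu\lambda_1\succ_\nu\cdots\succ_\nu\lambda_r\succ_\nu\mu_2}\frac{1}{|\lambda_1|^{k_1}\cdots|\lambda_r|^{k_r}}
\leq\prod_{i=1}^r\summ{\lambda\in\latzero\\\mu_1\succ_\nu\lambda\succ_\nu\mu_2}\frac{1}{|\lambda|^{k_i}}\,.
\end{align}
Each factor is handled by \cref{lem:single eis1} with $m=0$. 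For a compact $C\subset\HH$, it is bounded by a constant depending only on $C,k_i$, times $\LL(|\mu_1|+|\mu_2|)$ if $k_i=2$. This already gives absolute convergence for each $\tau$.

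The lattice points $\mu_1,\mu_2$ depend on $\tau$, so we regard them as fixed integer coordinates $(a,b)$ with $\mu=a\tau+b$. Then $|\mu_1|+|\mu_2|$ is bounded uniformly on $C$, and the product bound is uniform in $\tau\in C$.

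For uniform convergence we need a uniform tail estimate, not just a uniform bound on the total. We exhaust $\latzero$ by the finite sets $\{\lambda:|\lambda|<R\}$, or equivalently by integer boxes in the coordinates $(a,b)$, which is cleaner because the summation region is then independent of $\tau$. The tail for a given truncation consists of terms in which some $\lambda_j$ lies outside the box. It is bounded by
\begin{align}
\sum_{j=1}^r\Bigl(\sum_{\lambda\text{ outside box}}|\lambda|^{-k_j}\Bigr)\prod_{i\neq j}(\cdots)\,,
\end{align}
where the sum over $\lambda$ keeps the restriction $\mu_1\succ_\nu\lambda\succ_\nu\mu_2$.

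It therefore suffices to show that the one-variable tail $\sum_{\lambda\text{ outside box},\ \mu_1\succ_\nu\lambda\succ_\nu\mu_2}|\lambda|^{-k}$ tends to $0$ uniformly on $C$. For $k\geq3$ this follows from uniform convergence of the full Eisenstein-type sum, cited in the proof of \cref{lem:single eis1}.

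The main obstacle is $k=2$, where the full lattice sum diverges. Here we reuse the dyadic decomposition from the proof of \cref{lem:single eis1}. For $2^j$ larger than the width $w$ of the strip between $\mu_1+\RR(\nu-\tau)$ and $\mu_2+\RR(\nu-\tau)$, the dyadic block is $\ll_C(w+1)2^{-j}$. The tail beyond radius $R>2w$ is therefore $\ll_C(w+1)/R$, which tends to $0$ uniformly since $w\leq|\mu_1|+|\mu_2|$ is bounded on $C$.

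If $\nu=\infty$ and $\mu_1,\mu_2$ are such that the strip degenerates, the same counting bound $\min\{R^2,(w+1)R+1\}$ still applies. Converting $|\lambda|\geq R$ into a box condition uses $|a\tau+b|\asymp_C|a|+|b|$ on $C$. Uniform Cauchy tails then give absolute and uniform convergence on $C$. Since the terms are holomorphic, the limit is holomorphic as well.
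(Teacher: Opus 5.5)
Your proposal is correct and takes the paper's route: bound the absolute multiple sum by the product of the one-variable sums over $\mu_1\succ_\nu\lambda\succ_\nu\mu_2$, then control each factor with \cref{lem:single eis1}. Your explicit uniform tail estimate spells out a step the paper only asserts. It uses an exhaustion by integer boxes in $(a,b)$, which is independent of $\tau$, together with the dyadic bound $\ll_C (w+1)/R$ in the case $k=2$, and this fills in the uniform-convergence claim that the paper reads directly off \cref{lem:single eis1}, although that lemma as stated gives only a uniform bound.
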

\begin{proof}
By \cref{lem:single eis1}, the series $\summ{\lambda\in\latzero\\\mu_1\succ_\nu\lambda\succ_\nu\mu_2}\frac{1}{|\lambda|^k}$ converges uniformly for $\tau\in C$ for any $k\geq2$. Therefore, the series
\begin{align}
\summ{\lambda_1,\dots,\lambda_r\in\latzero\\\mu_1\succ_\nu\lambda_1\succ_\nu\cdots\succ_\nu\lambda_r\succ_\nu\mu_2}\frac{1}{|\lambda_1|^{k_1}\cdots|\lambda_r|^{k_r}}\leq\prod_{i=1}^r\summ{\lambda_i\in\latzero\\\mu_1\succ_\nu\lambda_i\succ_\nu\mu_2}\frac{1}{|\lambda_i|^{k_i}}
\end{align}
also converges uniformly for $\tau\in C$.
\end{proof}

\begin{lem}\label{lem:single eis2}
Let $k\geq3$. For $\tau\in C$ and $R>0$, we have
\begin{align}
\summ{\mu\in\latzero\\|\mu|>R}\frac{\LL(|\mu|)^m}{|\mu|^k}\ll_{C,k,m}\frac{\LL(R)^m}{R^{k-2}},\label{eq:single eis big}\\
\summ{\mu\in\latzero\\0<|\mu|\leq R}\frac{\LL(|\mu|)^m}{|\mu|^2}\ll_{C,m}\LL(R)^{m+1}\,.\label{eq:single eis small}
\end{align}
\end{lem}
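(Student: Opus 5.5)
Both estimates follow from the same dyadic decomposition in $|\mu|$ that was used in the proof of \cref{lem:single eis1}, now without the restriction to a strip. As there, put $m_C=\inf_{\tau\in C}\min_{\lambda\in\latzero}|\lambda|>0$. The lattice points are $m_C$-separated uniformly for $\tau\in C$, so a packing argument (disjoint discs of radius $m_C/4$) gives
\begin{align}
\#\{\mu\in\Lambda_\tau\mid R'\leq|\mu|<2R'\}\ll_C R'^2+1
\end{align}
for every $R'>0$, uniformly in $\tau\in C$.

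For \eqref{eq:single eis big}, I decompose the range $|\mu|>R$ into the shells $2^jR<|\mu|\leq2^{j+1}R$ with $j\geq0$. On the $j$-th shell the number of points is $\ll_C (2^jR)^2+1$ and each term is at most $\LL(2^{j+1}R)^m/(2^jR)^k$. For $R\geq m_C$ the shell count is $\ll_C(2^jR)^2$, so the $j$-th shell contributes
\begin{align}
\ll_C\frac{\LL(2^{j+1}R)^m}{(2^jR)^{k-2}}\,.
\end{align}
I then use $\LL(2^{j+1}R)\leq\LL(R)+(j+1)\log2$, which holds because $\log(2+2^{j+1}R)\leq\log(2+R)+(j+1)\log 2$. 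Together with $\LL(R)\geq1$ this gives $\LL(2^{j+1}R)^m\ll_m\LL(R)^m(1+j)^m$. Since $k-2\geq1$, the sum $\sum_j(1+j)^m2^{-j(k-2)}$ converges, and I obtain $\ll\LL(R)^m/R^{k-2}$.

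For $R<m_C$ the sum runs over all of $\latzero$. It is then bounded by the full sum $\sum_{\mu\in\latzero}\LL(|\mu|)^m/|\mu|^k\ll_{C,k,m}1$, which \cref{lem:single eis1} gives for $k\geq3$ (taking $\nu=\infty$ and $\mu_1,\mu_2$ far apart, or simply the convergence statement quoted in its proof). Since $R^{k-2}\leq m_C^{k-2}$ and $\LL(R)\geq1$, this constant is $\ll_C\LL(R)^m/R^{k-2}$.

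For \eqref{eq:single eis small}, I repeat the computation from the proof of \cref{lem:single eis1} with the width $w$ replaced by $R$. The points with $|\mu|<1$ contribute $O_{C,m}(1)$. The shells $2^j\leq|\mu|<2^{j+1}$ with $2^j\leq R$ each contribute $\ll_C\LL(2^{j+1})^m\ll_m(1+j)^m$. There are at most $1+\log_2R$ such shells when $R\geq1$ and none when $R<1$, so their total is $\ll(1+\log_2^+R)^{m+1}\ll\LL(R)^{m+1}$. The bound $O(1)$ for the innermost part is absorbed because $\LL(R)\geq1$.

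The argument is routine, and I expect the only point needing care to be the bookkeeping of uniformity. The implied constants must depend only on $C$, $k$ and $m$, and in particular not on $R$ as $R\to0$; this is why the small-$R$ case of \eqref{eq:single eis big} is handled separately using $m_C$. I also need the inequality $\LL(2^{j}R)\ll_m\LL(R)(1+j)$ uniformly in $R>0$, which is what allows the logarithmic factor to be pulled out of the geometric sum.
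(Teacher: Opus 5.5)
Your proposal is correct and follows essentially the same route as the paper: a dyadic decomposition in $|\mu|$, a lattice-point count that is uniform in $\tau\in C$, and the bound $\LL(2^{j+1}R)\leq\LL(R)+(j+1)\log2$, which lets you pull the logarithm out of a convergent geometric series. The differences are cosmetic. The paper first reduces to $R\geq1$, since both sums are $O(1)$ for $R<1$, and it counts lattice points via $\sqrt{a^2+b^2}\leq d_C|a\tau+b|$ rather than by packing. For your case $R<m_C$, rely on the standard uniform convergence of the full sum $\sum_{\mu\in\latzero}\LL(|\mu|)^m/|\mu|^k$ rather than on \cref{lem:single eis1} with ``$\mu_1,\mu_2$ far apart'', because its truncated sums never cover all of $\latzero$.
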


\begin{proof}
For $0<R<1$, both series are bounded by a constant from the above, and $1\ll_m\frac{\LL(R)^m}{R^{k-2}},\LL(R)^{m+1}$. Thus, it is enough to consider for $R\geq1$. First, it holds
\begin{align}
\#\{\mu\in\Lambda_\tau\mid|\mu|\leq R\}\ll_C R^2\,.
\end{align}
Indeed, for $\mu=a\tau+b$, we have $\sqrt{a^2+b^2}\leq d_C|a\tau+b|$ for some constant $d_C$ since $C$ is compact. Thus,
\begin{align}
\#\{\mu\in\Lambda_\tau\mid|\mu|\leq R\}&=\#\{(a,b)\in\ZZ^2\mid |a\tau+b|^2\leq R^2\}\\
&\leq\#\{(a,b)\in\ZZ^2\mid a^2+b^2\leq d_C^2R^2\}\ll_CR^2\,.
\end{align}
Therefore, the series in \eqref{eq:single eis big} can be estimated as follows:
\begin{align}
\summ{|\mu|>R}\frac{\LL(|\mu|)^m}{|\mu|^k}
&=\sum_{j=0}^\infty\summ{2^jR<|\mu|\leq2^{j+1}R}\frac{\LL(|\mu|)^m}{|\mu|^k}\\
&\leq\sum_{j=0}^\infty\#\{\mu\in\Lambda_\tau\mid2^jR<|\mu|\leq2^{j+1}R\}\frac{\LL(2^{j+1}R)^m}{(2^jR)^k}\\
&\ll_C\sum_{j=0}^\infty(2^{j+1}R)^2\frac{\LL(2^{j+1}R)^m}{(2^jR)^k}\\
&\ll\frac{1}{R^{k-2}}\sum_{j=0}^\infty\frac{\LL(2^{j+1}R)^m}{2^{j(k-2)}}\,.
\end{align}
Since $\LL(2^{j+1}R)\leq\LL(R)+(j+1)\log2\ll(j+1)\LL(R)$, we have
\begin{align}
\sum_{j=0}^\infty\frac{\LL(2^{j+1}R)^m}{2^{j(k-2)}}\ll_m\LL(R)^m\sum_{j=0}^\infty\frac{(j+1)^m}{2^{j(k-2)}}\ll_{m}\LL(R)^m\,.
\end{align}
Now, we prove \eqref{eq:single eis small}. Let $i_0\coloneqq\min\{i\geq0\mid R\leq2^{i+1}\}$, then we have
\begin{align}
\summ{0<|\mu|\leq R}\frac{\LL(|\mu|)^m}{|\mu|^2}&\leq\left(\summ{m_C\leq|\mu|\leq 1}+\sum_{i=0}^{i_0}\summ{2^i<|\mu|\leq 2^{i+1}}\right)\frac{\LL(|\mu|)^m}{|\mu|^2}\\
&\ll_{C,m}\frac{\LL(1)^m}{m_C^2}+\sum_{i=0}^{i_0}\#\{\mu\in\Lambda_\tau\mid2^i<|\mu|\leq2^{i+1}\}\frac{\LL(2^{i+1})^m}{2^{2i}}\,,
\end{align}
where $m_C=\inf_{\tau\in C}\min_{\lambda\in\latzero}|\lambda|>0$ as defined in the proof of \cref{lem:single eis1}. Again, the summand can be estimated as follows
\begin{align}
\#\{\mu\in\Lambda_\tau\mid2^i<|\mu|\leq2^{i+1}\}\frac{\LL(2^{i+1})^m}{2^{2i}}\ll_m(i+1)^m\,.
\end{align}
Thus, we have
\begin{align}
\summ{0<|\mu|\leq R}\frac{\LL(|\mu|)^m}{|\mu|^2}\ll_{C,m}1+\sum_{i=0}^{i_0}(i+1)^m\leq1+(i_0+1)^{m+1}\,.
\end{align}
Now, it holds $i_0+1\ll\LL(R)$ by $i_0\leq\log_2R$, and therefore we have the desired estimate.
\end{proof}

\begin{lem}\label{lem:shifted single eis}
Let $k_1,k_2\geq2$. For $\tau\in C$ and $\lambda\in\latzero$, we have
\begin{align}\label{eq:shifted single eis}
\summ{\mu\in\Lambda_\tau\setminus\{0,\lambda\}}\frac{\LL(|\mu|)^m}{|\mu|^{k_1}|\mu-\lambda|^{k_2}}
\ll_{C,m}\frac{\LL(|\lambda|)^{m+1}}{|\lambda|^2}\,.
\end{align}
\end{lem}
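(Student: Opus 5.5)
We split the lattice $\Lambda_\tau\setminus\{0,\lambda\}$ into three regions according to the size of $|\mu|$ relative to $|\lambda|$:
\begin{align}
A&=\{\mu: |\mu|\leq |\lambda|/2\},\\
B&=\{\mu: |\mu-\lambda|\leq |\lambda|/2\},\\
R&=\text{the rest}.
\end{align}
Since $k_1,k_2\geq2$, in each region we may replace the exponents by $2$ as soon as the relevant modulus is bounded below by a constant. This uses that $|\mu|,|\mu-\lambda|\geq m_C$ for nonzero lattice differences, with $m_C$ as in the proof of \cref{lem:single eis1}. Hence it suffices, up to constants depending on $C,k_1,k_2$, to treat the case $k_1=k_2=2$ after pulling out the appropriate powers.

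On $A$ we have $|\mu-\lambda|\geq|\lambda|/2$, so the contribution is bounded by
\begin{align}
\frac{2^{k_2}}{|\lambda|^{k_2}}\summ{0<|\mu|\leq|\lambda|/2}\frac{\LL(|\mu|)^m}{|\mu|^{k_1}}.
\end{align}
Using $|\mu|^{-k_1}\leq m_C^{2-k_1}|\mu|^{-2}$ and \eqref{eq:single eis small}, this is $\ll_{C,m}\LL(|\lambda|)^{m+1}|\lambda|^{-k_2}$. Finally, $|\lambda|^{-k_2}\leq m_C^{2-k_2}|\lambda|^{-2}$, which gives the claimed bound.

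On $B$ we have $|\mu|\geq|\lambda|/2$ and $|\mu|\leq 3|\lambda|/2$, so $\LL(|\mu|)\ll\LL(|\lambda|)$. Substituting $\nu=\mu-\lambda$, which is again a nonzero lattice point, the contribution is
\begin{align}
\ll\frac{\LL(|\lambda|)^m}{|\lambda|^{k_1}}\summ{0<|\nu|\leq|\lambda|/2}\frac{1}{|\nu|^{k_2}}.
\end{align}
By \eqref{eq:single eis small} with $m=0$, this is $\ll\LL(|\lambda|)^{m+1}|\lambda|^{-2}$.

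On $R$ both $|\mu|>|\lambda|/2$ and $|\mu-\lambda|>|\lambda|/2$. Moreover $|\mu-\lambda|\geq|\mu|-|\lambda|$, so $|\mu-\lambda|\gg|\mu|$ once $|\mu|\geq 2|\lambda|$, while for $|\mu|<2|\lambda|$ we simply have $|\mu-\lambda|>|\lambda|/2\gg|\mu|$. In both cases $|\mu-\lambda|\gg|\mu|$ on $R$. Hence the summand is $\ll\LL(|\mu|)^m|\mu|^{-(k_1+k_2)}$ with $k_1+k_2\geq4$, and \eqref{eq:single eis big} with $R=|\lambda|/2$ gives
\begin{align}
\ll\frac{\LL(|\lambda|)^m}{|\lambda|^{k_1+k_2-2}}\ll\frac{\LL(|\lambda|)^m}{|\lambda|^{2}},
\end{align}
again using $|\lambda|\geq m_C$.

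Here \cref{lem:single eis2} requires $R>0$, which holds; if $|\lambda|/2<1$, everything is bounded trivially. The only delicate point is making every comparison uniform in $\tau\in C$. All the comparisons used involve only the triangle inequality and the uniform lower bound $m_C$, so this presents no real obstacle. Note also that the extra logarithm in the final bound comes solely from the $\sum|\nu|^{-2}$ sums over the regions $A$ and $B$.
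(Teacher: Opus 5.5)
Your proof is correct and follows essentially the paper's argument. Your regions $A$, $B$, $R$ are the paper's $I_0$, $I_1$ and $I_2\cup I_3$, each bounded via \cref{lem:single eis2}; the only cosmetic differences are that you merge $I_2,I_3$ using $|\mu-\lambda|\geq|\mu|/4$, and you use the lower bound $m_C$ to reduce exponents to $2$ instead of treating $|\lambda|<2$ as a separate case.
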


\begin{proof}
First, we prove \eqref{eq:shifted single eis} for $|\lambda|<2$. In this case, we have $|\mu|,|\mu-\lambda|\geq m_C$ for any $\mu\in\Lambda_\tau\setminus\{0,\lambda\}$ and $|\mu-\lambda|\geq\frac{|\mu|}{2}$ if $|\mu|\geq4$. Thus, we have
\begin{align}
\summ{\mu\in\Lambda_\tau\setminus\{0,\lambda\}}\frac{\LL(|\mu|)^m}{|\mu|^{k_1}|\mu-\lambda|^{k_2}}&=\left(\summ{|\mu|<4}+\summ{|\mu|\geq4}\right)\frac{\LL(|\mu|)^m}{|\mu|^{k_1}|\mu-\lambda|^{k_2}}\\
&\ll_{C,m}1+\summ{|\mu|\geq4}\frac{\LL(|\mu|)^m}{|\mu|^{k_1+k_2}}\ll_{C,m}1\,.
\end{align}
On the other hand, since $m_C\leq|\lambda|<2$, we have
\begin{align}
\frac{\LL(|\lambda|)^m}{|\lambda|^2}\geq\frac{\LL(m_C)^m}{4}\gg_{C}1\,,
\end{align}
and thus, we have \eqref{eq:shifted single eis} for $|\lambda|<2$. Now, we prove the case for $|\lambda|\geq2$. We divide the series in \eqref{eq:shifted single eis} into four terms as follows:
\begin{align}
\summ{\mu\in\Lambda_\tau\setminus\{0,\lambda\}}\frac{\LL(|\mu|)^m}{|\mu|^{k_1}|\mu-\lambda|^{k_2}}=\sum_{\mu\in I_0\cup I_1\cup I_2\cup I_3}\frac{\LL(|\mu|)^m}{|\mu|^{k_1}|\mu-\lambda|^{k_2}}\,,
\end{align}
where
\begin{align}
I_0&\coloneqq\left\{\mu\in\Lambda_\tau\setminus\{0,\lambda\}\relmid|\mu|\leq\frac{|\lambda|}{2}\right\},&
I_1&\coloneqq\left\{\mu\in\Lambda_\tau\setminus\{0,\lambda\}\relmid|\mu-\lambda|\leq\frac{|\lambda|}{2}\right\},\\
I_2&\coloneqq\left\{\mu\in\Lambda_\tau\setminus\{0,\lambda\}\relmid\frac{|\lambda|}{2}<|\mu|\leq|\mu-\lambda|\right\},&
I_3&\coloneqq\left\{\mu\in\Lambda_\tau\setminus\{0,\lambda\}\relmid\frac{|\lambda|}{2}<|\mu-\lambda|<|\mu|\right\}\,.
\end{align}
For the first term, by \cref{lem:single eis2}, we have
\begin{align}
\sum_{\mu\in I_0}\frac{\LL(|\mu|)^m}{|\mu|^{k_1}|\mu-\lambda|^{k_2}}&\ll\frac{1}{|\lambda|^{k_2}}\summ{0<|\mu|\leq\frac{|\lambda|}{2}}\frac{\LL(|\mu|)^m}{|\mu|^{k_1}}\\
&\ll_{C,m}\frac{\LL(|\lambda|)^{m+1}}{|\lambda|^{k_2}}\,.
\end{align}
The second term can also be estimated as follows:
\begin{align}
\sum_{\mu\in I_1}\frac{\LL(|\mu|)^m}{|\mu|^{k_1}|\mu-\lambda|^{k_2}}&\ll\frac{\LL(|\lambda|)^m}{|\lambda|^{k_1}}\summ{\mu\in\latzero\\0<|\mu-\lambda|\leq\frac{|\lambda|}{2}}\frac{1}{|\mu-\lambda|^{k_2}}\\
&\ll_{C,m}\frac{\LL(|\lambda|)^{m+1}}{|\lambda|^{k_1}}\,.
\end{align}
For the third term, using \eqref{eq:single eis big}, we have
\begin{align}
\sum_{\mu\in I_2}\frac{\LL(|\mu|)^m}{|\mu|^{k_1}|\mu-\lambda|^{k_2}}\leq\summ{|\mu|>\frac{|\lambda|}{2}}\frac{\LL(|\mu|)^m}{|\mu|^{k_1+k_2}}\ll_{C,k_1,k_2,m}\frac{\LL(|\lambda|)^m}{|\lambda|^{k_1+k_2-2}}\,.
\end{align}
For the fourth term, if $\mu\in I_3$, it holds $|\mu|\leq|\mu-\lambda|+|\lambda|<3|\mu-\lambda|$. Therefore we have
\begin{align}
\sum_{\mu\in I_3}\frac{\LL(|\mu|)^m}{|\mu|^{k_1}|\mu-\lambda|^{k_2}}\ll\summ{|\mu-\lambda|>\frac{|\lambda|}{2}}\frac{\LL(|\mu-\lambda|)^m}{|\mu-\lambda|^{k_1+k_2}}\ll_{C,k_1,k_2,m}\frac{\LL(|\lambda|)^m}{|\lambda|^{k_1+k_2-2}}\,.
\end{align}
Combining the estimates, we have \eqref{eq:shifted single eis} for $|\lambda|\geq2$.
\end{proof}

Denote $\chi_N(\lambda)=\ind_{\lambda\ordN0}-\ind_{\lambda\succ0}$ and, for $j\in\ZZ_{>0}$, $f_j(\mu,\lambda)=(\mu-\lambda)^{-2j}$ if $\mu\neq\lambda$ and $f_j(\mu,\lambda)=0$ if $\mu=\lambda$.

\begin{lem}\label{lem:single diff}
Let $\mu_1,\mu_2$ be lattice points that satisfy $\mu_1\ordN\mu_2$ and $\mu_1\succ\mu_2$. For $N,j\in\ZZ_{>0}$, we have
\begin{align}\label{eq:single diff}
G^{(N;\mu_1,\mu_2)}_{2j}(\tau)-G^{(\infty;\mu_1,\mu_2)}_{2j}(\tau)=\summ{\lambda\in\latzero}\chi_N(\lambda)(f_j(\mu_1,\lambda)-f_j(\mu_2,\lambda))\,,
\end{align}
and the series in the right-hand side absolutely converges for any $j\geq1$.
\end{lem}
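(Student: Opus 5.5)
The plan is to rewrite each truncated depth-one sum using indicator functions of the positive cones, pass to the difference variable $\kappa=\mu_i-\lambda$, and justify the rearrangement by a finite truncation whose boundary error tends to zero.

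\textbf{Step 1: translation invariance and indicator identity.} For $\nu\in\ZZ_{>0}\cup\{\infty\}$ write $P_\nu=\{\kappa\mid\kappa\succ_\nu0\}$. Both orders are translation invariant: $\alpha\succ_\nu\beta$ iff $\alpha-\beta\in P_\nu$. Moreover $\Lambda_\tau\setminus\{0\}=P_\nu\sqcup(-P_\nu)$. Since $\mu_1\succ_\nu\mu_2$ for $\nu=N$ and $\nu=\infty$, totality of the order gives for every $\lambda$
\begin{align}
\ind_{\mu_1\succ_\nu\lambda\succ_\nu\mu_2}
=\ind_{\mu_1-\lambda\in P_\nu}-\ind_{\mu_2-\lambda\in P_\nu}-\ind_{\lambda=\mu_2}\,.
\end{align}
Indeed, $\mu_1-\lambda\in P_\nu$ means $\lambda\prec_\nu\mu_1$, and $\mu_2-\lambda\in P_\nu$ means $\lambda\prec_\nu\mu_2$. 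The last term does not depend on $\nu$, so it cancels in the difference $N$ versus $\infty$.

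\textbf{Step 2: reindexing and sign.} Put $\kappa=\mu_i-\lambda$ in the $i$-th term. Then $\lambda^{-2j}=(\mu_i-\kappa)^{-2j}=f_j(\mu_i,\kappa)$. The excluded point $\lambda=0$ corresponds to $\kappa=\mu_i$, where $f_j$ vanishes by definition. The point $\kappa=0$ is harmless since $\chi_N(0)=0$. Formally we get
\begin{align}
\ind_{\kappa\in P_N}-\ind_{\kappa\in P_\infty}=\chi_N(\kappa)\,,
\end{align}
and the $\mu_1$-term enters with $+$ and the $\mu_2$-term with $-$. This gives exactly the right-hand side of \eqref{eq:single diff}.

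\textbf{Step 3: absolute convergence of the right-hand side.} The support of $\chi_N$ is the symmetric difference $P_N\triangle P_\infty$, a double wedge. It has positive angle, so $\sum|\kappa|^{-2}$ over it diverges, and the two pieces cannot be summed separately. Instead we use that for $|\kappa|\ge2(|\mu_1|+|\mu_2|)$ the mean value theorem gives
\begin{align}
|f_j(\mu_1,\kappa)-f_j(\mu_2,\kappa)|\ll_j|\mu_1-\mu_2|\,|\kappa|^{-2j-1}\,.
\end{align}
This is summable over $\Lambda_\tau\setminus\{0\}$ by \cref{lem:single eis2}, since $2j+1\ge3$. The finitely many remaining $\kappa$ contribute finite terms.

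\textbf{Step 4: justifying the rearrangement (main obstacle).} Step 2 is only formal, because splitting into $\mu_1$- and $\mu_2$-pieces produces divergent sums when $j=1$. I would proceed as follows.
\begin{enumerate}[(a)]
\item By \cref{lem:trunc conv}, the left-hand side equals $\lim_{R\to\infty}$ of the finite sum over $0<|\lambda|\le R$ of $\lambda^{-2j}(\ind^{(N)}-\ind^{(\infty)})$, where $\ind^{(\nu)}=\ind_{\mu_1\succ_\nu\lambda\succ_\nu\mu_2}$.
\item Insert Step 1 into this finite sum and split it into the two finite sums, which is now legitimate.
\item Reindex each by $\kappa=\mu_i-\lambda$. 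The $i$-th sum becomes a sum over the disc $|\mu_i-\kappa|\le R$.
\item Compare with the truncated right-hand side over $|\kappa|\le R$. The discrepancy consists of $\kappa$ in the symmetric difference of the discs $\{|\kappa|\le R\}$ and $\{|\mu_i-\kappa|\le R\}$.
\item That annular region contains $O_C(R)$ lattice points, each with $|f_j|\ll R^{-2j}$. The error is therefore $O(R^{1-2j})\to0$, even for $j=1$.
\end{enumerate}
Letting $R\to\infty$ and using the absolute convergence from Step 3 identifies the limit with the right-hand side, which proves \eqref{eq:single diff}.
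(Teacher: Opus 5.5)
Your proposal is correct and follows essentially the same route as the paper. The paper also bounds $|f_j(\mu_1,\lambda)-f_j(\mu_2,\lambda)|\ll_j|\mu_1-\mu_2|\,|\lambda|^{-2j-1}$ for absolute convergence, writes the truncated sum via $\ind_{\mu_1\succ_\nu\lambda}-\ind_{\mu_2\succeq_\nu\lambda}$, translates $\lambda\mapsto\mu_i-\lambda$, and shows that the shifted-truncation boundary error is $O(M^{1-2j})$; the only cosmetic differences are that the paper truncates by squares $\ZZ_M\tau+\ZZ_M$ and treats each $\nu$ separately before subtracting, while you use discs $|\lambda|\le R$ and combine into $\chi_N(\mu_i-\lambda)$ first.
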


\begin{proof}
We first prove the absolute convergence of the right-hand side. For sufficiently large $|\lambda|$, we have
\begin{align}
f_j(\mu_1,\lambda)-f_j(\mu_2,\lambda)&=\frac{(\mu_2-\lambda)^{2j}-(\mu_1-\lambda)^{2j}}{(\mu_1-\lambda)^{2j}(\mu_2-\lambda)^{2j}}\\
&=(\mu_2-\mu_1)\frac{\sum_{i=0}^{2j-1}(\mu_2-\lambda)^{2j-i-1}(\mu_1-\lambda)^{i}}{(\mu_1-\lambda)^{2j}(\mu_2-\lambda)^{2j}}\,.
\end{align}
Since it holds
\begin{align}
\frac{1}{2}|\lambda|\leq|\lambda|-|\mu_i|\leq|\mu_i-\lambda|\leq|\mu_i|+|\lambda|\leq\frac{3}{2}|\lambda|\quad(i=1,2)
\end{align}
if $|\lambda|\geq2\max\{|\mu_1|,|\mu_2|\}$, we have
\begin{align}
|f_j(\mu_1,\lambda)-f_j(\mu_2,\lambda)|\ll_j|\mu_2-\mu_1|\frac{|\lambda|^{2j-1}}{|\lambda|^{4j}}=\frac{|\mu_2-\mu_1|}{|\lambda|^{2j+1}}
\end{align}
for sufficiently large $|\lambda|$. Therefore the right-hand side absolutely converges. Now, we prove \eqref{eq:single diff}. Since $G^{(\nu;\mu_1,\mu_2)}_{2j}(\tau)$ absolutely converges for any $j\geq1$, we have
\begin{align}
G^{(\nu;\mu_1,\mu_2)}_{2j}(\tau)&=\lim_{M\to\infty}\summ{0\neq\lambda\in\ZZ_M\tau+\ZZ_M\\\mu_1\succ_\nu\lambda\succ_\nu\mu_2}\frac{1}{\lambda^{2j}}\\
&=\lim_{M\to\infty}\left(\summ{\lambda\in\ZZ_M\tau+\ZZ_M\\\mu_1\succ_\nu\lambda}\frac{\ind_{\lambda\neq0}}{\lambda^{2j}}-\summ{\lambda\in\ZZ_M\tau+\ZZ_M\\\mu_2\succeq_\nu\lambda}\frac{\ind_{\lambda\neq0}}{\lambda^{2j}}\right)\\
&=\lim_{M\to\infty}\left(\summ{\lambda\in\mu_1+\ZZ_M\tau+\ZZ_M}\ind_{\lambda\succ_\nu0}f_j(\mu_1,\lambda)-\summ{\lambda\in\mu_2+\ZZ_M\tau+\ZZ_M}\ind_{\lambda\succ_\nu0}f_j(\mu_2,\lambda)-f_j(\mu_2,0)\right)\\
&=\lim_{M\to\infty}\left(\summ{\lambda\in\ZZ_M\tau+\ZZ_M}\ind_{\lambda\succ_\nu0}f_j(\mu_1,\lambda)-\summ{\lambda\in\ZZ_M\tau+\ZZ_M}\ind_{\lambda\succ_\nu0}f_j(\mu_2,\lambda)-f_j(\mu_2,0)\right)
\end{align}
The last equality follows from
\begin{align}
\lim_{M\to\infty}\left(\summ{\lambda\in\mu+\ZZ_M\tau+\ZZ_M}-\summ{\lambda\in\ZZ_M\tau+\ZZ_M}\right)\ind_{\lambda\succ_\nu0}f_j(\mu,\lambda)=0
\end{align}
for $\nu\in\ZZ_{>0}\cup\{\infty\}$ and $\mu\in\Lambda_\tau$. Indeed, we have
\begin{align}
\left|\left(\summ{\lambda\in\mu+\ZZ_M\tau+\ZZ_M}-\summ{\lambda\in\ZZ_M\tau+\ZZ_M}\right)\ind_{\lambda\succ_\nu0}f_j(\mu,\lambda)\right|\leq\sum_{\lambda\in D_{M,\mu}}|f_j(\mu,\lambda)|
\end{align}
where $D_{M,\mu}\coloneqq(\mu+\ZZ_M\tau+\ZZ_M)\triangle(\ZZ_M\tau+\ZZ_M)$ is the symmetric difference of $\mu+\ZZ_M\tau+\ZZ_M$ and $\ZZ_M\tau+\ZZ_M$. It holds $\#D_{M,\mu}=2(2M-1)(|a|+|b|)-2|a||b|\ll_{\mu}M$ where $\mu=a\tau+b$, and for sufficiently large $M$, we have $|\mu-\lambda|\gg_{\mu,\tau} M$ for any $\lambda\in D_{M,\mu}$. Therefore, we have
\begin{align}
\sum_{\lambda\in D_{M,\mu}}|f_j(\mu,\lambda)|\ll_{\mu,\tau}M^{1-2j}
\end{align}
for sufficiently large $M$. Thus, the difference $G^{(N;\mu_1,\mu_2)}_{2j}(\tau)-G^{(\infty;\mu_1,\mu_2)}_{2j}(\tau)$ is equal to the right-hand side of \eqref{eq:single diff}.
\end{proof}

\begin{lem}\label{lem:chiN}
For $\tau\in C$ and $N\in\ZZ_{>0}$, we have
\begin{align}\label{eq:chiN}
\summ{\lambda\in\latzero\\\chi_N(\lambda)\neq0}\frac{\LL(|\lambda|)^m}{|\lambda|^3}\ll_{C,m}\frac{\LL(N)^m}{N^2}\,.
\end{align}
\end{lem}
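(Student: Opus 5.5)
We first describe the set where $\chi_N(\lambda)\neq0$ and then bound the sum over it by a dyadic decomposition. Write $\lambda=a\tau+b$. If $a=0$, then $H_N(\lambda)=b$, and both orders give $\lambda\succ0\iff b>0$, so $\chi_N(\lambda)=0$. If $a\geq1$, then $\lambda\succ0$, while $\lambda\ordN0$ fails exactly when $Na+b\leq0$, i.e. $b\leq-Na$. If $a\leq-1$, then $\lambda\not\succ0$, while $\lambda\ordN0$ holds exactly when $Na+b\geq0$ (the tie $Na+b=0$ counts since $a<0$), i.e. $b\geq N|a|$. Hence the support is
\begin{align}
\{a\tau+b\mid a\neq0,\ |b|\geq N|a|,\ \operatorname{sign}(b)=-\operatorname{sign}(a)\}\,,
\end{align}
and we only need the weaker description $|a|\geq1$, $|b|\geq N|a|$.

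Next we show that $|\lambda|\gg_C N|a|$ uniformly for $\tau\in C$ on this set. As in the proof of \cref{lem:single eis2}, compactness of $C$ gives $|a\tau+b|\geq d_C^{-1}\sqrt{a^2+b^2}\geq d_C^{-1}|b|\geq d_C^{-1}N|a|$.

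With this, we sum over $a\neq0$ and, for each fixed $a$, over $b$ with $|b|\geq N|a|$. Since $|\lambda|\gg_C\max(|b|,N|a|)$ and $\LL(|\lambda|)\ll_C\LL(|b|)$ (using $|\lambda|\ll_C|a|+|b|\leq2|b|$), we get
\begin{align}
\summ{\chi_N(\lambda)\neq0}\frac{\LL(|\lambda|)^m}{|\lambda|^3}
\ll_{C,m}\sum_{a\neq0}\sum_{|b|\geq N|a|}\frac{\LL(|b|)^m}{|b|^3}
\ll_m\sum_{a\geq1}\frac{\LL(Na)^m}{(Na)^2}\,.
\end{align}
The inner tail bound $\sum_{b\geq X}\LL(b)^m b^{-3}\ll_m\LL(X)^m X^{-2}$ follows either by comparison with an integral or by a dyadic split exactly as in \eqref{eq:single eis big}. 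Finally, $\LL(Na)\leq\LL(N)+\log(1+a)\ll\LL(N)(1+\log(1+a))$, because $\log(2+Na)\leq\log(2+N)+\log a$. Therefore
\begin{align}
\sum_{a\geq1}\frac{\LL(Na)^m}{(Na)^2}\ll_m\frac{\LL(N)^m}{N^2}\sum_{a\geq1}\frac{(1+\log(1+a))^m}{a^2}\ll_m\frac{\LL(N)^m}{N^2}\,,
\end{align}
which is \eqref{eq:chiN}.

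The main point to get right is the case analysis of $\chi_N$, in particular the tie-break on $H_N(\lambda)=0$ and the fact that the row $a=0$ cancels. The analytic estimates are routine applications of the lattice-norm comparison from \cref{lem:single eis2}, with all implied constants uniform in $\tau\in C$.
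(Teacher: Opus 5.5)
Your proof is correct, and it shares the paper's skeleton. Both arguments use the same support computation; the paper writes the support as $A_N\cup-A_N$ with $A_N=\{a\tau+b\mid a\geq1,\ b\leq-Na\}$ and uses the symmetry $\lambda\mapsto-\lambda$. In both arguments the controlling size is the same quantity: after the paper substitutes $\lambda=a(\tau-N)-b$ with $b\geq0$, its $aN+b$ is exactly your $|b|$.

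The difference lies in how this is turned into a bound. The paper obtains the two-sided comparison $|\lambda|\asymp_C aN+b$ from the real part, $|\Re(\lambda)|\geq\frac12(aN+b)$. That estimate needs $N\geq2\sup_{\tau\in C}|\Re(\tau)|$, so small $N$ must be handled separately by the trivial bound. The paper then sums over dyadic shells $2^jN\leq aN+b<2^{j+1}N$ using the two-dimensional lattice count $\ll2^{2j}N$.

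You instead use the norm equivalence $|a\tau+b|\gg_C\sqrt{a^2+b^2}$, which holds for all $N$ at once. You then perform iterated one-dimensional sums: the tail over $b$ gives $\LL(Na)^m(Na)^{-2}$, and summing over $a$ with $\LL(Na)\leq\LL(N)+\log a$ finishes the argument. Your route is a bit shorter and avoids the case split in $N$. The paper's version yields a sharper two-sided size comparison and matches the dyadic bookkeeping of the neighbouring lemmas, but the stated upper bound needs nothing beyond what you use.
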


\begin{proof}
First, the support of $\chi_N$ can be determined as follows:
\begin{align}
\chi_N(\lambda)\neq0\equiviff\lambda\in A_N\cup-A_N\,,
\end{align}
where $A_N\coloneqq\{\lambda\in\Lambda_\tau\mid\lambda\prec_N0,\lambda\succ0\}=\{a\tau+b\in\Lambda_\tau\mid a\geq1,b\leq-Na\}$. Thus, we have
\begin{align}
\summ{\lambda\in\latzero\\\chi_N(\lambda)\neq0}\frac{\LL(|\lambda|)^m}{|\lambda|^3}&=2\sum_{\lambda\in A_N}\frac{\LL(|\lambda|)^m}{|\lambda|^3}
=2\sum_{a\geq1,b\geq0}\frac{\LL(|a(\tau-N)-b|)^m}{|a(\tau-N)-b|^3}
\end{align}
Now, we prove \eqref{eq:chiN} for $N\geq2M_C\coloneqq2\sup_{\tau\in C}|\Re(\tau)|$. Notice that it holds
\begin{align}
aN+b\ll_C|a(\tau-N)-b|\ll_CaN+b\,.
\end{align}
Indeed, let $M_C^\prime\coloneqq\sup_{\tau\in C}|\tau|$, then we have
\begin{align}
&|\Re(a(\tau-N)-b)|=a(N-\Re(\tau))+b\geq a(N-M_C)+b\geq\frac{1}{2}(aN+b),\\
&|a(\tau-N)-b|\leq a|\tau-N|+b\leq a(N+M_C^\prime)+b\leq(1+M_C^\prime)(aN+b)\,.
\end{align}
Therefore, we have
\begin{align}
\sum_{a\geq1,b\geq0}\frac{\LL(|a(\tau-N)-b|)^m}{|a(\tau-N)-b|^3}
&=\sum_{j=0}^\infty\summ{a\geq1,b\geq0\\2^jN\leq aN+b<2^{j+1}N}\frac{\LL(|a(\tau-N)-b|)^m}{|a(\tau-N)-b|^3}\\
&\ll_C\sum_{j=0}^\infty\#\left\{(a,b)\in\ZZ^2\relmid\begin{matrix}a\geq1,b\geq0\\2^jN\leq aN+b<2^{j+1}N\end{matrix}\right\}\frac{\LL(2^{j+1}N)^m}{(2^jN)^3}\\
&\ll\sum_{j=0}^\infty2^{2j}N\frac{\LL(2^{j+1}N)^m}{(2^jN)^3}\,.
\end{align}
This series can be estimated as follows:
\begin{align}
\sum_{j=0}^\infty2^{2j}N\frac{\LL(2^{j+1}N)^m}{(2^jN)^3}&\ll_m\frac{1}{N^2}\sum_{j=0}^\infty\frac{\LL(N)^m(j+1)^m}{2^j}\ll_m\frac{\LL(N)^m}{N^2}\,.
\end{align}
This shows \eqref{eq:chiN} for $N\geq2M_C$. For $1\leq N<2M_C$, we have
\begin{align}
\summ{\lambda\in\Lambda_\tau\\\chi_N(\lambda)\neq0}\frac{\LL(|\lambda|)^m}{|\lambda|^3}\ll_{C,m}1\ll_{C,m}\frac{\LL(N)^m}{N^2}\,.
\end{align}
This completes the proof.
\end{proof}

\begin{lem}\label{lem:key estim1}
Let $j\in\ZZ_{>0}$ and $k,k_1,k_2\geq3$. For $\tau\in C$ and $\lambda\in\latzero$, we have
\begin{align}
&\summ{\mu_1,\mu_2\in\latzero}\frac{\LL(|\mu_1|+|\mu_2|)^m}{|\mu_1|^{k_1}|\mu_2|^{k_2}}|f_j(\mu_1,\lambda)-f_j(\mu_2,\lambda)|\ll_{C,m}\frac{\LL(|\lambda|)^{2m+1}}{|\lambda|^3},\label{eq:key estim double}\\
&\summ{\mu\in\latzero}\frac{\LL(|\mu|)^m}{|\mu|^k}|f_j(\mu,\lambda)-f_j(0,\lambda)|\ll_{C,m}\frac{\LL(|\lambda|)^{2m+1}}{|\lambda|^3}\,.\label{eq:key estim single}
\end{align}
\end{lem}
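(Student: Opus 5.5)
The plan is to prove \eqref{eq:key estim single} first and then reduce \eqref{eq:key estim double} to it. Throughout, $\tau\in C$, and $|\lambda|\geq m_C$ for every $\lambda\in\latzero$. The constant $m_C$ is the one from the proof of \cref{lem:single eis1}.

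\emph{Reduction of \eqref{eq:key estim double}.} Two elementary inequalities do the work. The first is
\begin{align}
|f_j(\mu_1,\lambda)-f_j(\mu_2,\lambda)|\leq|f_j(\mu_1,\lambda)-f_j(0,\lambda)|+|f_j(\mu_2,\lambda)-f_j(0,\lambda)|\,.
\end{align}
The second is $\LL(s+t)\leq\LL(s)+\LL(t)\leq 2\LL(s)\LL(t)$, which holds because $2+s+t\leq(2+s)(2+t)$ and $\LL\geq1$. Together they bound the double sum by two products, of which the first is
\begin{align}
\summ{\mu_2\in\latzero}\frac{\LL(|\mu_2|)^m}{|\mu_2|^{k_2}}\cdot\summ{\mu_1\in\latzero}\frac{\LL(|\mu_1|)^m}{|\mu_1|^{k_1}}|f_j(\mu_1,\lambda)-f_j(0,\lambda)|\,,
\end{align}
and the second is the same with the roles of $\mu_1$ and $\mu_2$ swapped. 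Since $k_2\geq3$, the first factor is $O_{C,m}(1)$ by \cref{lem:single eis1} with $\nu=\infty$ and the whole lattice. The second factor is exactly the sum in \eqref{eq:key estim single}. So it suffices to show \eqref{eq:key estim single} with the stronger bound $\LL(|\lambda|)^{m+1}/|\lambda|^3$; because $\LL\geq1$, this bound is in particular at most $\LL(|\lambda|)^{2m+1}/|\lambda|^3$.

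\emph{Proof of \eqref{eq:key estim single}.} Recall $f_j(0,\lambda)=\lambda^{-2j}$. I split the $\mu$-sum into $|\mu|\leq|\lambda|/2$ and $|\mu|>|\lambda|/2$.

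In the near range $|\mu|\leq|\lambda|/2$ we have $|\mu-\lambda|\geq|\lambda|/2$. The mean value bound, or the factorization used in the proof of \cref{lem:single diff}, then gives
\begin{align}
|(\mu-\lambda)^{-2j}-\lambda^{-2j}|\ll_j\frac{|\mu|}{|\lambda|^{2j+1}}\ll_{C}\frac{|\mu|}{|\lambda|^3}\,,
\end{align}
where the last step uses $|\lambda|\geq m_C$ and $2j+1\geq3$. Hence this part is at most a constant times
\begin{align}
\frac{1}{|\lambda|^3}\summ{0<|\mu|\leq|\lambda|/2}\frac{\LL(|\mu|)^m}{|\mu|^{k-1}}\,.
\end{align}
Since $k-1\geq2$ and $|\mu|\geq m_C$, the sum is $\ll_{C}\summ{0<|\mu|\leq|\lambda|/2}\LL(|\mu|)^m/|\mu|^2$. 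By \eqref{eq:single eis small} this is $\ll_{C,m}\LL(|\lambda|)^{m+1}$.

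In the far range $|\mu|>|\lambda|/2$ I estimate the two terms separately. For the $\lambda^{-2j}$ term, \eqref{eq:single eis big} gives
\begin{align}
|\lambda|^{-2j}\summ{|\mu|>|\lambda|/2}\frac{\LL(|\mu|)^m}{|\mu|^k}\ll\frac{\LL(|\lambda|)^m}{|\lambda|^{2j+k-2}}\ll_C\frac{\LL(|\lambda|)^m}{|\lambda|^3}\,.
\end{align}
For the $f_j(\mu,\lambda)$ term, the summand vanishes at $\mu=\lambda$. Elsewhere I use $|\mu|^{-k}\leq(2/|\lambda|)|\mu|^{-(k-1)}$ and then \cref{lem:shifted single eis} with exponents $k_1=k-1\geq2$ and $k_2=2j\geq2$. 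This gives a bound
\begin{align}
\frac{2}{|\lambda|}\cdot\frac{\LL(|\lambda|)^{m+1}}{|\lambda|^2}\,.
\end{align}
Adding the near and far ranges proves \eqref{eq:key estim single}.

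\emph{Main obstacle.} The main point is the near range: there one must gain the full factor $|\mu|/|\lambda|^{3}$ from the cancellation in $f_j(\mu,\lambda)-f_j(0,\lambda)$, and not settle for $|\lambda|^{-2}$. The resulting logarithmic divergence of $\sum\LL^m/|\mu|^2$ is exactly absorbed by the extra power of $\LL$. Beyond that, the only care needed is to keep all implied constants uniform in $\tau\in C$, and this is supplied by the cited lemmas together with $|\lambda|\geq m_C$. The implied constants may also depend on $j$ and $k$, as in the earlier lemmas.
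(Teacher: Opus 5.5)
Your proof is correct, but it organizes the argument differently from the paper. The paper proves \eqref{eq:key estim double} directly by splitting the pairs $(\mu_1,\mu_2)$ into three regions. On $J_0$, where $|\mu_1|,|\mu_2|\leq|\lambda|/2$, it uses the cancellation bound $|f_j(\mu_1,\lambda)-f_j(\mu_2,\lambda)|\ll_j(|\mu_1|+|\mu_2|)/|\lambda|^{2j+1}$. On $J_1\cup J_2$, where one of the $\mu_i$ is large, it bounds the two $f_j$'s separately. There a tail factor $\sum_{|\mu_1|>|\lambda|/2}\LL(|\mu_1|)^m/|\mu_1|^{k_1}\ll\LL(|\lambda|)^m/|\lambda|^{k_1-2}$ gets multiplied by \cref{lem:shifted single eis}, and this product is where the exponent $2m+1$ comes from. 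The paper then only asserts that \eqref{eq:key estim single} follows ``the same way'' with $\mu_2$ replaced by $0$.

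You instead pivot through $f_j(0,\lambda)$ and use $\LL(s+t)\leq2\LL(s)\LL(t)$. This decouples $\mu_1$ and $\mu_2$ completely and reduces the double estimate to the single one times an absolutely convergent factor. Your proof of the single estimate (near range by cancellation plus \eqref{eq:single eis small}, far range by \eqref{eq:single eis big} and \cref{lem:shifted single eis}) is precisely the $\mu_2=0$ specialization of the paper's region argument, written out in full.

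What your route buys is a shorter argument with no case analysis in two variables, and an explicit proof of the single estimate. It also gives the sharper bound $\LL(|\lambda|)^{m+1}/|\lambda|^3$ in both cases. That improvement is harmless but irrelevant downstream, since \cref{lem:key estim2} only needs some exponent $M$.

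One small citation point: the uniform bound $\sum_{\mu\in\latzero}\LL(|\mu|)^m/|\mu|^{k_2}\ll_{C,m}1$ over the whole lattice is not literally an instance of \cref{lem:single eis1}, which concerns sums strictly between two lattice points. Cite it instead as the standard fact invoked at the start of that proof, or as \eqref{eq:single eis big} with $R<m_C$.
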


\begin{proof}
We first prove \eqref{eq:key estim double}. We divide the series in \eqref{eq:key estim double} into three terms as follows:
\begin{align}
J_0&\coloneqq\left\{(\mu_1,\mu_2)\in(\latzero)^2\relmid|\mu_1|,|\mu_2|\leq\frac{|\lambda|}{2}\right\},\\
J_1&\coloneqq\left\{(\mu_1,\mu_2)\in(\latzero)^2\relmid|\mu_1|>\frac{|\lambda|}{2}\right\},\\
J_2&\coloneqq\left\{(\mu_1,\mu_2)\in(\latzero)^2\relmid|\mu_1|\leq\frac{|\lambda|}{2}<|\mu_2|\right\}\,.
\end{align}
For the first term, since it holds
\begin{align}
|f_j(\mu_1,\lambda)-f_j(\mu_2,\lambda)|\ll_j\frac{|\mu_1-\mu_2|}{|\lambda|^{2j+1}}\leq\frac{|\mu_1|+|\mu_2|}{|\lambda|^{2j+1}}\,,
\end{align}
we have
\begin{align}
&\sum_{(\mu_1,\mu_2)\in J_0}\frac{\LL(|\mu_1|+|\mu_2|)^m}{|\mu_1|^{k_1}|\mu_2|^{k_2}}|f_j(\mu_1,\lambda)-f_j(\mu_2,\lambda)|\\
&\ll_j\frac{1}{|\lambda|^{2j+1}}\sum_{(\mu_1,\mu_2)\in J_0}\left(\frac{\LL(|\mu_1|)^m\LL(|\mu_2|)^m}{|\mu_1|^{k_1-1}|\mu_2|^{k_2}}+\frac{\LL(|\mu_1|)^m\LL(|\mu_2|)^m}{|\mu_1|^{k_1}|\mu_2|^{k_2-1}}\right)\\
&\ll_{C,m}\frac{\LL(|\lambda|)^{m+1}}{|\lambda|^{2j+1}}
\end{align}
For the last inequality, we use \eqref{eq:single eis small} and the fact that $\textstyle\summ{\mu\in\latzero}\frac{\LL(|\mu|)^m}{|\mu|^k}$ converges for $k\geq3$. For the second term, we have
\begin{align}
&\sum_{(\mu_1,\mu_2)\in J_1}\frac{\LL(|\mu_1|+|\mu_2|)^m}{|\mu_1|^{k_1}|\mu_2|^{k_2}}|f_j(\mu_1,\lambda)-f_j(\mu_2,\lambda)|\\
&\leq\summ{|\mu_1|>\frac{|\lambda|}{2},\mu_2\in\latzero}\frac{\LL(|\mu_1|+|\mu_2|)^m}{|\mu_1|^{k_1}|\mu_2|^{k_2}}\left(|f_j(\mu_1,\lambda)|+|f_j(\mu_2,\lambda)|\right)\\
&\ll_m\Bigg(\summ{|\mu_1|>\frac{|\lambda|}{2},\mu_1\neq\lambda}\frac{\LL(|\mu_1|)^m}{|\mu_1|^{k_1}|\mu_1-\lambda|^{2j}}\Bigg)\left(\summ{\mu_2\neq0}\frac{\LL(|\mu_2|)^m}{|\mu_2|^{k_2}}\right)\\
&\quad+\left(\summ{|\mu_1|>\frac{|\lambda|}{2}}\frac{\LL(|\mu_1|)^m}{|\mu_1|^{k_1}}\right)\left(\summ{\mu_2\neq0,\lambda}\frac{\LL(|\mu_2|)^m}{|\mu_2|^{k_2}|\mu_2-\lambda|^{2j}}\right)\,.
\end{align}
By \cref{lem:shifted single eis}, the first product is bounded by the following
\begin{align}
\frac{2}{|\lambda|}\Bigg(\summ{|\mu_1|>\frac{|\lambda|}{2},\mu_1\neq\lambda}\frac{\LL(|\mu_1|)^m}{|\mu_1|^{k_1-1}|\mu_1-\lambda|^{2j}}\Bigg)\left(\summ{\mu_2\neq0}\frac{\LL(|\mu_2|)^m}{|\mu_2|^{k_2}}\right)\ll_{C,m}\frac{1}{|\lambda|}\cdot\frac{\LL(|\lambda|)^{m+1}}{|\lambda|^2}\,.
\end{align}
By \cref{lem:single eis2} and \cref{lem:shifted single eis}, the second product is $\ll_{C,m}\frac{\LL(|\lambda|)^{m}}{|\lambda|^{k_1-2}}\cdot\frac{\LL(|\lambda|)^{m+1}}{|\lambda|^{2}}\ll_{C,m}\frac{\LL(|\lambda|)^{2m+1}}{|\lambda|^{3}}$. For the third term, we can reduce the proof to the case of the second term since we have $J_2\subset\left\{(\mu_1,\mu_2)\relmid|\mu_2|>\frac{|\lambda|}{2}\right\}$. Therefore, we have \eqref{eq:key estim double} and \eqref{eq:key estim single} follows the same way by replacing $\mu_2$ with $0$.
\end{proof}

\begin{lem}\label{lem:twotwo}
Let $\nu\in\ZZ_{>0}\cup\{\infty\}$ and $r\geq0$. For $\tau\in C$ and lattice points $\mu_1,\mu_2\in\Lambda_\tau$ with $\mu_1\succ_\nu\mu_2$, we have
\begin{align}
\left|G^{(\nu;\mu_1,\mu_2)}_{\{2\}^r}(\tau)\right|&\ll_{C,r}\LL(|\mu_1|+|\mu_2|)^r\,.
\end{align}
\end{lem}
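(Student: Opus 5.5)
We argue by induction on $r$, bounding the absolute value of the truncated sum by the sum of absolute values. For $r=0$ the quantity is $\ind_{\mu_1\succ_\nu\mu_2}\le 1=\LL(\cdot)^0$, and for $r=1$ the claim is exactly the case $k=2$, $m=0$ of \cref{lem:single eis1}. More precisely, we prove the stronger statement
\begin{align}
\summ{\lambda_1,\dots,\lambda_r\in\latzero\\\mu_1\succ_\nu\lambda_1\succ_\nu\cdots\succ_\nu\lambda_r\succ_\nu\mu_2}\frac{1}{|\lambda_1|^{2}\cdots|\lambda_r|^{2}}\ll_{C,r}\LL(|\mu_1|+|\mu_2|)^r\,,
\end{align}
which is absolutely convergent by \cref{lem:trunc conv} and clearly implies the lemma.

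For the inductive step, we isolate the last variable $\lambda_r$ and write the sum as
\begin{align}
\summ{\lambda_r\in\latzero\\\mu_1\succ_\nu\lambda_r\succ_\nu\mu_2}\frac{1}{|\lambda_r|^2}\summ{\lambda_1,\dots,\lambda_{r-1}\in\latzero\\\mu_1\succ_\nu\lambda_1\succ_\nu\cdots\succ_\nu\lambda_{r-1}\succ_\nu\lambda_r}\frac{1}{|\lambda_1|^2\cdots|\lambda_{r-1}|^2}\,.
\end{align}
By the induction hypothesis applied with endpoints $\mu_1\succ_\nu\lambda_r$, the inner sum is $\ll_{C,r}\LL(|\mu_1|+|\lambda_r|)^{r-1}$. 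Since $\LL(s+t)\le\LL(s)+\LL(t)\ll\LL(s)\LL(t)$ for $s,t\ge0$ (from $\log(2+s+t)\le\log(2+s)+\log(2+t)$), this is $\ll_r\LL(|\mu_1|)^{r-1}\LL(|\lambda_r|)^{r-1}$. The outer sum is then
\begin{align}
\LL(|\mu_1|)^{r-1}\summ{\lambda\in\latzero\\\mu_1\succ_\nu\lambda\succ_\nu\mu_2}\frac{\LL(|\lambda|)^{r-1}}{|\lambda|^2}\,,
\end{align}
and by \cref{lem:single eis1} with $k=2$ and $m=r-1$ this is $\ll_{C,r}\LL(|\mu_1|)^{r-1}\LL(|\mu_1|+|\mu_2|)^{r}$.

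This bound has the wrong exponent, $2r-1$ instead of $r$. This is the main obstacle: the naive splitting of $\LL(|\mu_1|+|\lambda_r|)$ loses a factor. We would fix it by using \emph{monotonicity} in place of submultiplicativity. All $\lambda_r$ in the range lie between $\mu_1$ and $\mu_2$ in the order $\succ_\nu$, but this does not by itself bound $|\lambda_r|$.

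The way out is to bound the induction hypothesis more cleverly. The proof of \cref{lem:single eis1} actually yields the sharper bound $\LL(w)^{m+1}$, where $w$ is the width of the strip between $\mu_1$ and $\mu_2$ in the direction $\nu-\tau$. We therefore strengthen the induction statement to $\ll_{C,r}\LL(w(\mu_1,\mu_2))^r$, with $w$ the width of the strip between the lines $\mu_1+\RR(\nu-\tau)$ and $\mu_2+\RR(\nu-\tau)$. If $\mu_1\succ_\nu\lambda\succ_\nu\mu_2$, then $\lambda$ lies in the closed strip, so $w(\mu_1,\lambda)\le w(\mu_1,\mu_2)$. The inner sum is therefore $\ll\LL(w(\mu_1,\mu_2))^{r-1}$ uniformly in $\lambda_r$, and the outer sum with $m=0$ contributes $\LL(w)$ by the width version of \cref{lem:single eis1}. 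This gives $\LL(w)^r$. Finally, $w\le|\mu_1|+|\mu_2|$ yields the lemma.
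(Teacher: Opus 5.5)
Your final argument is correct, but it takes a longer road than the paper. The paper uses neither induction nor a width refinement. By transitivity of $\succ_\nu$, each $\lambda_i$ in the ordered $r$-fold sum satisfies $\mu_1\succ_\nu\lambda_i\succ_\nu\mu_2$ on its own. So simply forgetting the mutual ordering of the $\lambda_i$ bounds the sum of absolute values by
\[
\Bigl(\summ{\lambda\in\latzero\\\mu_1\succ_\nu\lambda\succ_\nu\mu_2}\frac{1}{|\lambda|^2}\Bigr)^r,
\]
and each factor is $\ll_C\LL(|\mu_1|+|\mu_2|)$ by the \emph{statement} of \cref{lem:single eis1} with $k=2$, $m=0$.

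The exponent loss in your first attempt is self-inflicted. It comes from conditioning on $\lambda_r$ and then measuring the inner sum through $|\lambda_r|$, which is unbounded on the strip. Your repair is sound: you strengthen the hypothesis to $\LL(w(\mu_1,\mu_2))^r$ and use $w(\mu_1,\lambda_r)\le w(\mu_1,\mu_2)$ for $\lambda_r$ in the closed strip. This works because the proof of \cref{lem:single eis1} does yield $\ll_{C,m}\LL(w)^{m+1}$, once the bounded contribution of $|\lambda|<1$ is absorbed using $\LL\geq1+\log2$.

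However, your route relies on the interior of that proof rather than its statement. The sharper width bound is also not specific to your route: inserting the width version into the product bound gives $\LL(w)^r$ equally. Downstream, in \cref{lem:diff twotwo estim}, \cref{lem:key estim2} and the proof of \cref{thm:first-order}, only the stated bound in terms of $|\mu_1|+|\mu_2|$ is used. So the one-line product argument is all that is needed.
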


\begin{proof}
By definition of $G^{(\nu;\mu_1,\mu_2)}(\tau)$, we have
\begin{align}
\left|G^{(\nu;\mu_1,\mu_2)}_{\{2\}^r}(\tau)\right|&\leq\summ{\lambda_1,\dots,\lambda_r\in\latzero\\\mu_1\succ_\nu\lambda_1\succ_\nu\cdots\succ_\nu\lambda_r\succ_\nu\mu_2}\frac{1}{|\lambda_1|^2\cdots|\lambda_r|^2}\leq\Bigg(\summ{\lambda\in\latzero\\\mu_1\succ_\nu\lambda\succ_\nu\mu_2}\frac{1}{|\lambda|^2}\Bigg)^r\,.
\end{align}
By \cref{lem:single eis1} with $m=0$, we have the conclusion.
\end{proof}

\begin{lem}\label{lem:diff twotwo estim}
Let $N,r\in\ZZ_{>0}$. For $\tau\in C$ and $\mu_1,\mu_2\in\Lambda_\tau$ with $\mu_1\ordN\mu_2$ and $\mu_1\succ\mu_2$, we have
\begin{align}\label{eq:diff twotwo estim}
\left|G^{(N;\mu_1,\mu_2)}_{\{2\}^r}(\tau)-G^{(\infty;\mu_1,\mu_2)}_{\{2\}^r}(\tau)\right|\ll_{C,r}\LL(|\mu_1|+|\mu_2|)^{r-1}\sum_{j=1}^r\left|G^{(N;\mu_1,\mu_2)}_{2j}(\tau)-G^{(\infty;\mu_1,\mu_2)}_{2j}(\tau)\right|\,.
\end{align}
\end{lem}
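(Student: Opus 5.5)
The plan is to use Newton's identities, which express the elementary symmetric function $e_r$ as a polynomial in the power sums $p_1,\dots,p_r$. Write $S=\{\lambda\in\latzero\mid\mu_1\succ_\nu\lambda\succ_\nu\mu_2\}$ for $\nu\in\{N,\infty\}$. For $r\ge1$, the strict order $\succ_\nu$ is total, so $G^{(\nu;\mu_1,\mu_2)}_{\{2\}^r}$ is exactly $e_r$ evaluated on the absolutely summable family $(\lambda^{-2})_{\lambda\in S}$. Likewise $G^{(\nu;\mu_1,\mu_2)}_{2j}$ is the power sum $p_j$ of the same family.

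Absolute convergence comes from \cref{lem:single eis1} (with $k=2$). It lets us apply the identity $r e_r=\sum_{i=1}^r(-1)^{i-1}e_{r-i}p_i$ to the infinite family; one can justify this by truncating to finite subsets and passing to the limit. Unwinding the recursion, $e_r=P_r(p_1,\dots,p_r)$, where $P_r$ is a universal polynomial with rational coefficients. It is weighted homogeneous of degree $r$ when $p_j$ has weight $j$. So every monomial has the form $p_{j_1}\cdots p_{j_s}$ with $j_1+\dots+j_s=r$.

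Write $x_j=G^{(N;\mu_1,\mu_2)}_{2j}$ and $y_j=G^{(\infty;\mu_1,\mu_2)}_{2j}$. For each monomial, telescope:
\begin{align}
x_{j_1}\cdots x_{j_s}-y_{j_1}\cdots y_{j_s}=\sum_{t=1}^s x_{j_1}\cdots x_{j_{t-1}}(x_{j_t}-y_{j_t})y_{j_{t+1}}\cdots y_{j_s}\,.
\end{align}
Each factor $x_i$ or $y_i$ other than the difference is bounded as follows:
\begin{align}
|x_i|,|y_i|\le\sum_{\lambda\in S}|\lambda|^{-2}\ll_C\LL(|\mu_1|+|\mu_2|)\,.
\end{align}
This uses \cref{lem:single eis1} with $m=0$, valid for both $\nu=N$ and $\nu=\infty$ because the hypothesis gives $\mu_1\ordN\mu_2$ and $\mu_1\succ\mu_2$. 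Each term in the telescoped sum has at most $s-1\le r-1$ such factors. Since $\LL\ge1$, each term is therefore
\begin{align}
\ll_{C,r}\LL(|\mu_1|+|\mu_2|)^{r-1}|x_{j_t}-y_{j_t}|\,,\qquad 1\le j_t\le r\,.
\end{align}
Summing over the finitely many monomials of $P_r$ and over $t$ gives the claimed bound, with a constant depending only on $C$ and $r$.

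The only step needing care is justifying Newton's identity for the infinite absolutely convergent family. I would handle it by truncation: the identity holds for each finite subset, and both sides converge by dominated convergence, since every sum is bounded by products of $\sum_{\lambda\in S}|\lambda|^{-2}<\infty$. Alternatively, one can compare coefficients in
\begin{align}
\prod_{\lambda\in S}(1+t\lambda^{-2})=\exp\Bigl(\sum_{j\ge1}\frac{(-1)^{j-1}}{j}p_jt^j\Bigr)
\end{align}
for small $t$. The rest is routine bookkeeping.
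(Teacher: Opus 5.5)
Your proof is correct and is essentially the paper's argument. The paper uses the same Newton recursion $rG_{\{2\}^r}=\sum_{j=1}^r(-1)^{j-1}G_{\{2\}^{r-j}}G_{2j}$ and telescopes, but it inducts on $r$ rather than unrolling everything into the polynomial $P_r$, and it bounds the remaining factors with \cref{lem:single eis1} and \cref{lem:twotwo} just as you do.

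One small slip: the inequality $|x_i|\le\sum_{\lambda\in S}|\lambda|^{-2}$ fails for $i\ge2$ when $\Lambda_\tau$ contains points with $|\lambda|<1$. However, \cref{lem:single eis1} with $k=2i\ge4$ gives $|x_i|,|y_i|\ll_{C,r}1\le\LL(|\mu_1|+|\mu_2|)$, which is all your bound needs.
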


\begin{proof}
In this proof, we omit $\mu_1,\mu_2,\tau$. By the standard fact of the harmonic product, we have
\begin{align}
rG^{(\nu)}_{\{2\}^r}=\sum_{j=1}^r(-1)^{j-1}G^{(\nu)}_{\{2\}^{r-j}}G^{(\nu)}_{2j}\,.
\end{align}
Therefore, we have
\begin{align}
G^{(N)}_{\{2\}^r}-G^{(\infty)}_{\{2\}^r}
&=\frac{1}{r}\sum_{j=1}^r(-1)^{j-1}\bigg\{\left(G^{(N)}_{\{2\}^{r-j}}-G^{(\infty)}_{\{2\}^{r-j}}\right)G^{(N)}_{2j}+G^{(\infty)}_{\{2\}^{r-j}}\left(G^{(N)}_{2j}-G^{(\infty)}_{2j}\right)\bigg\}\,.\label{eq:diff twotwo}
\end{align}
Now, we prove \eqref{eq:diff twotwo estim} by induction on $r\geq1$. The case $r=1$ is clear. When $r>1$, the term with $j=r$ in the first sum vanishes, since $G^{(N)}_{\{2\}^0}=G^{(\infty)}_{\{2\}^0}=1$ by the assumptions on $\mu_1,\mu_2$. For $j<r$, by inductive hypothesis and \cref{lem:single eis1}, we have
\begin{align}\label{eq:diff1}
\left|G^{(N)}_{\{2\}^{r-j}}-G^{(\infty)}_{\{2\}^{r-j}}\right|\left|G^{(N)}_{2j}\right|\ll_{C,r,j}
\begin{cases}
\LL(|\mu_1|+|\mu_2|)^{r-1}\sum_{i=1}^{r-1}\left|G^{(N)}_{2i}-G^{(\infty)}_{2i}\right|&j=1,\\
\LL(|\mu_1|+|\mu_2|)^{r-j-1}\sum_{i=1}^{r-j}\left|G^{(N)}_{2i}-G^{(\infty)}_{2i}\right|&j=2,\dots,r-1.
\end{cases}
\end{align}
Furthermore, by \cref{lem:twotwo}, we have
\begin{align}\label{eq:diff2}
\sum_{j=1}^r\left|G^{(\infty)}_{\{2\}^{r-j}}\right|\left|G^{(N)}_{2j}-G^{(\infty)}_{2j}\right|\ll_{C,r}\LL(|\mu_1|+|\mu_2|)^{r-1}\sum_{j=1}^r\left|G^{(N)}_{2j}-G^{(\infty)}_{2j}\right|\,.
\end{align}
Combining \eqref{eq:diff twotwo}, \eqref{eq:diff1} and \eqref{eq:diff2}, we obtain \eqref{eq:diff twotwo estim}.
\end{proof}

\begin{lem}\label{lem:key estim2}
Let $k,k_1,k_2\in\ZZ_{\geq3}$ and $r\geq0$. There exists a real number $M\geq0$ such that for $\tau\in C$ and $N\in\ZZ_{>0}$, we have
\begin{align}
&\summ{\mu_1,\mu_2\in\latzero}\frac{\LL(|\mu_1|+|\mu_2|)^m}{|\mu_1|^{k_1}|\mu_2|^{k_2}}\left|G^{(N;\mu_1,\mu_2)}_{\{2\}^r}(\tau)-G^{(\infty;\mu_1,\mu_2)}_{\{2\}^r}(\tau)\right|\ll_{C,k_1,k_2,r,m}\frac{\LL(N)^M}{N^2},\label{eq:main estim1}\\
&\summ{\mu\in\latzero}\frac{\LL(|\mu|)^m}{|\mu|^k}\left|G^{(N;\mu,0)}_{\{2\}^r}(\tau)-G^{(\infty;\mu,0)}_{\{2\}^r}(\tau)\right|\ll_{C,k,r,m}\frac{\LL(N)^M}{N^2}\,.\label{eq:main estim2}
\end{align}
\end{lem}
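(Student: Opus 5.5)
The plan is to reduce both bounds to \cref{lem:chiN} by chaining \cref{lem:diff twotwo estim}, \cref{lem:single diff} and \cref{lem:key estim1}. First we split the pairs $(\mu_1,\mu_2)$ according to how the two orders compare them. Since $\succ_N$ and $\succ$ are translation invariant total orders, they compare $\mu_1,\mu_2$ differently exactly when $\chi_N(\mu_1-\mu_2)\neq0$. This gives three cases.

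If $\mu_2\succ\mu_1$ and $\mu_2\ordN\mu_1$, both truncated series vanish for $r\geq1$ (the range of summation is empty), and for $r=0$ both indicators are $0$. So these pairs contribute nothing.

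If the two orders disagree, one of the two truncated series is $0$. By \cref{lem:twotwo} the difference is then $\ll_{C,r}\LL(|\mu_1|+|\mu_2|)^r$, and for $r=0$ it is $1$. Writing $\lambda=\mu_1-\mu_2$, this part of \eqref{eq:main estim1} is bounded by
\begin{align}
\summ{\lambda\in\latzero\\\chi_N(\lambda)\neq0}\ \summ{\mu_2\in\Lambda_\tau\setminus\{0,-\lambda\}}\frac{\LL(|\lambda|+2|\mu_2|)^{m+r}}{|\mu_2+\lambda|^{k_1}|\mu_2|^{k_2}}\,.
\end{align}
We use $\LL(|\lambda|+2|\mu_2|)\ll\LL(|\lambda|)\LL(|\mu_2|)$ and rerun the four-region decomposition $I_0,\dots,I_3$ from the proof of \cref{lem:shifted single eis}, now with $k_1,k_2\geq3$. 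The regions $I_0,I_1$ then give $|\lambda|^{-\min(k_1,k_2)}\leq|\lambda|^{-3}$ instead of $|\lambda|^{-2}$, and $I_2,I_3$ give $|\lambda|^{-(k_1+k_2-2)}$. So the inner sum is $\ll\LL(|\lambda|)^{M_0}/|\lambda|^3$, and \cref{lem:chiN} bounds this part by $\LL(N)^{M_0}/N^2$.

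For \eqref{eq:main estim2} the disagreeing case is simply $\chi_N(\mu)\neq0$. That sum is $\summ{\chi_N(\mu)\neq0}\LL(|\mu|)^{m+r}/|\mu|^k$, which \cref{lem:chiN} handles directly since $k\geq3$.

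In the main case, $\mu_1\ordN\mu_2$ and $\mu_1\succ\mu_2$, so \cref{lem:diff twotwo estim} applies (for $r=0$ the difference is $0$). We then substitute \cref{lem:single diff} for each $j\leq r$, giving the bound
\begin{align}
\LL(|\mu_1|+|\mu_2|)^{r-1}\sum_{j=1}^r\summ{\lambda\in\latzero\\\chi_N(\lambda)\neq0}|f_j(\mu_1,\lambda)-f_j(\mu_2,\lambda)|\,.
\end{align}
Inserting this into \eqref{eq:main estim1}, we interchange the summations over $(\mu_1,\mu_2)$ and $\lambda$, which is legitimate for nonnegative terms. Then \eqref{eq:key estim double}, with $m$ replaced by $m+r-1$, bounds the inner sum by $\LL(|\lambda|)^{2(m+r)-1}/|\lambda|^3$. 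A final application of \cref{lem:chiN} yields $\LL(N)^M/N^2$ with $M=2(m+r)-1$, or the larger of this and $M_0$. The same steps with $\mu_2=0$ and \eqref{eq:key estim single} give \eqref{eq:main estim2}.

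The main obstacle is the disagreeing-order case, because \cref{lem:diff twotwo estim} is unavailable there. The convolution bound must supply a power $|\lambda|^{-3}$ rather than the $|\lambda|^{-2}$ of \cref{lem:shifted single eis}. I would first check that the $I_0,I_1$ estimates in that proof indeed improve to exponent $\min(k_1,k_2)\geq3$; this holds because \eqref{eq:single eis small} is only needed with the exponent $2$ replaced by $\geq3$, where the sum converges. The remaining care is tracking the logarithmic exponents so that a single $M$ works.
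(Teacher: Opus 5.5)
Your proposal is correct and follows the paper's proof. The pairs are split the same way: those on which $\succ_N$ and $\succ$ agree are handled by \cref{lem:diff twotwo estim}, \cref{lem:single diff}, \eqref{eq:key estim double} and \cref{lem:chiN}, and those on which they disagree are handled by \cref{lem:twotwo}, the substitution $\lambda=\mu_1-\mu_2$ with $\chi_N(\lambda)\neq0$, a convolution bound of order $|\lambda|^{-3}$ up to logarithms, and \cref{lem:chiN}. The only difference is how that convolution bound is obtained. The paper writes $\frac{1}{|\mu_2+\lambda|^{k_1}|\mu_2|^{k_2}}\leq\frac{2}{|\lambda|}\bigl(\frac{1}{|\mu_2+\lambda|^{k_1-1}|\mu_2|^{k_2}}+\frac{1}{|\mu_2+\lambda|^{k_1}|\mu_2|^{k_2-1}}\bigr)$ and applies \cref{lem:shifted single eis} with exponents at least $2$. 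You instead rerun that lemma's four-region proof, which works equally well, though the gain there comes from the factors $|\lambda|^{-k_1}$ and $|\lambda|^{-k_2}$ with $k_i\geq3$; convergence of the small sum only saves a logarithm.
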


\begin{proof}
First, we prove \eqref{eq:main estim1}. For $r=0$ the difference is $\ind_{\mu_1\ordN\mu_2}-\ind_{\mu_1\succ\mu_2}$, which is $\pm1$ exactly when $\chi_N(\mu_1-\mu_2)\neq0$, and the sum is then bounded by \cref{lem:chiN} together with \cref{lem:shifted single eis} as in the $K_1$ part below. For $r\geq1$, we divide the sum in \eqref{eq:main estim1} into two terms as follows:
\begin{align}
K_0&\coloneqq\{(\mu_1,\mu_2)\in(\latzero)^2\mid\mu_1\ordN\mu_2,\mu_1\succ\mu_2\},\\
K_1&\coloneqq\{(\mu_1,\mu_2)\in(\latzero)^2\mid\mu_1\ordN\mu_2\succ\mu_1\}\cup\{(\mu_1,\mu_2)\in(\latzero)^2\mid\mu_2\ordN\mu_1\succ\mu_2\}\,.
\end{align}
For all other pairs $(\mu_1,\mu_2)$, both $G^{(N;\mu_1,\mu_2)}_{\{2\}^r}(\tau)$ and $G^{(\infty;\mu_1,\mu_2)}_{\{2\}^r}(\tau)$ vanish. By \cref{lem:single diff} and \cref{lem:diff twotwo estim}, we have
\begin{align}
&\summ{(\mu_1,\mu_2)\in K_0}\frac{\LL(|\mu_1|+|\mu_2|)^m}{|\mu_1|^{k_1}|\mu_2|^{k_2}}\left|G^{(N;\mu_1,\mu_2)}_{\{2\}^r}(\tau)-G^{(\infty;\mu_1,\mu_2)}_{\{2\}^r}(\tau)\right|\\
&\ll_{C,r}\summ{(\mu_1,\mu_2)\in K_0}\frac{\LL(|\mu_1|+|\mu_2|)^{m+r-1}}{|\mu_1|^{k_1}|\mu_2|^{k_2}}\sum_{j=1}^r\summ{\lambda\in\latzero}|\chi_N(\lambda)||f_j(\mu_1,\lambda)-f_j(\mu_2,\lambda)|\,.
\end{align}
Using \cref{lem:key estim1} and \cref{lem:chiN}, we have
\begin{align}
&\summ{(\mu_1,\mu_2)\in K_0}\frac{\LL(|\mu_1|+|\mu_2|)^{m+r-1}}{|\mu_1|^{k_1}|\mu_2|^{k_2}}\sum_{j=1}^r\summ{\lambda\in\latzero}|\chi_N(\lambda)||f_j(\mu_1,\lambda)-f_j(\mu_2,\lambda)|\\
&\ll_{C,r,m}\summ{\lambda\in\latzero}|\chi_N(\lambda)|\frac{\LL(|\lambda|)^{2m+2r-1}}{|\lambda|^3}\\
&\ll_{C,r,m}\frac{\LL(N)^{2m+2r-1}}{N^2}\,.
\end{align}
Now, we give an estimate for the series running over $K_1$. Notice that, for $(\mu_1,\mu_2)\in K_1$, one of $G^{(N;\mu_1,\mu_2)}_{\{2\}^r}(\tau)$ and $G^{(\infty;\mu_1,\mu_2)}_{\{2\}^r}(\tau)$ vanishes. From this fact and \cref{lem:twotwo}, we have
\begin{align}
&\summ{(\mu_1,\mu_2)\in K_1}\frac{\LL(|\mu_1|+|\mu_2|)^m}{|\mu_1|^{k_1}|\mu_2|^{k_2}}\left|G^{(N;\mu_1,\mu_2)}_{\{2\}^r}(\tau)-G^{(\infty;\mu_1,\mu_2)}_{\{2\}^r}(\tau)\right|\\
&=\summ{\mu_1\ordN\mu_2\\\mu_1\prec\mu_2}\frac{\LL(|\mu_1|+|\mu_2|)^m}{|\mu_1|^{k_1}|\mu_2|^{k_2}}\left|G^{(N;\mu_1,\mu_2)}_{\{2\}^r}(\tau)\right|
+\summ{\mu_1\prec_N\mu_2\\\mu_1\succ\mu_2}\frac{\LL(|\mu_1|+|\mu_2|)^m}{|\mu_1|^{k_1}|\mu_2|^{k_2}}\left|G^{(\infty;\mu_1,\mu_2)}_{\{2\}^r}(\tau)\right|\\
&\ll_{C,r}\summ{(\mu_1,\mu_2)\in K_1}\frac{\LL(|\mu_1|+|\mu_2|)^{m+r}}{|\mu_1|^{k_1}|\mu_2|^{k_2}}\,.
\end{align}
By $|\mu_1|+|\mu_2|\leq|\mu_1-\mu_2|+2|\mu_2|$, it holds
\begin{align}
\LL(|\mu_1|+|\mu_2|)^{m+r}\ll\LL(|\mu_1-\mu_2|)^{m+r}\LL(|\mu_2|)^{m+r}\,.
\end{align}
Thus, we have
\begin{align}
\summ{(\mu_1,\mu_2)\in K_1}\frac{\LL(|\mu_1|+|\mu_2|)^{m+r}}{|\mu_1|^{k_1}|\mu_2|^{k_2}}\ll\summ{\lambda\in\latzero\\\chi_N(\lambda)\neq0}\LL(|\lambda|)^{m+r}\summ{\mu_2\in\Lambda_\tau\setminus\{0,-\lambda\}}\frac{\LL(|\mu_2|)^{m+r}}{|\mu_2+\lambda|^{k_1}|\mu_2|^{k_2}}\,.
\end{align}
Since $|\mu_2|\geq\frac{|\lambda|}{2}$ or $|\mu_2+\lambda|\geq\frac{|\lambda|}{2}$ holds for any $\mu_2\in\Lambda_\tau\setminus\{0,-\lambda\}$, we have
\begin{align}
\frac{1}{|\mu_2+\lambda|^{k_1}|\mu_2|^{k_2}}\leq\frac{2}{|\lambda|}\left(\frac{1}{|\mu_2+\lambda|^{k_1-1}|\mu_2|^{k_2}}+\frac{1}{|\mu_2+\lambda|^{k_1}|\mu_2|^{k_2-1}}\right)\,,
\end{align}
and since $k_1-1,k_2-1\geq2$, \cref{lem:shifted single eis} gives
\begin{align}
\summ{\mu_2\in\Lambda_\tau\setminus\{0,-\lambda\}}\frac{\LL(|\mu_2|)^{m+r}}{|\mu_2+\lambda|^{k_1}|\mu_2|^{k_2}}\ll_{C,m,r}\frac{\LL(|\lambda|)^{m+r+1}}{|\lambda|^3}\,.
\end{align}
Together with \cref{lem:chiN}, we have
\begin{align}
\summ{\lambda\in\latzero\\\chi_N(\lambda)\neq0}\LL(|\lambda|)^{m+r}\summ{\mu_2\in\Lambda_\tau\setminus\{0,-\lambda\}}\frac{\LL(|\mu_2|)^{m+r}}{|\mu_2+\lambda|^{k_1}|\mu_2|^{k_2}}&\ll_{C,k_1,k_2,m,r}\summ{\lambda\in\latzero\\\chi_N(\lambda)\neq0}\LL(|\lambda|)^{m+r}\frac{\LL(|\lambda|)^{m+r+1}}{|\lambda|^3}\\
&\ll_{C,m,r}\frac{\LL(N)^{2(m+r)+1}}{N^2}\,.
\end{align}
Therefore we have \eqref{eq:main estim1}. \eqref{eq:main estim2} follows the same two-way split with $\mu_2$ replaced by $0$.
\end{proof}

\subsection{Proof of \texorpdfstring{\cref{thm:first-order}}{the main estimate}}
We write $\kk=(k_1,\dots,k_r)=(a_1,\{2\}^{b_1},\dots,a_s,\{2\}^{b_s})$ with $s\geq1$, $a_i\geq3$ and $b_i\geq0$, where $\{2\}^b$ denotes $b$ consecutive entries $2$. This is possible and unique since $k_1\geq3$. For any $N\in\ZZ_{>0}$, the following is just the definition of $G^{(N)}_\kk$ with the sum split at the entries $a_i$, the $\mu_i$ being the lattice points at these entries and $\mu_{s+1}=0$.
\begin{align}
G^{(N)}_\kk(\tau)=\summ{\mu_1,\dots,\mu_s\in\Lambda_\tau\setminus\{0\}\\\mu_{s+1}=0}\prod_{i=1}^s\frac{G^{(N;\mu_i,\mu_{i+1})}_{\{2\}^{b_i}}(\tau)}{\mu_i^{a_i}}\,.
\end{align}
Thus, we have
\begin{align}
G^{(N)}_\kk(\tau)-G_\kk(\tau)=\summ{\mu_1,\dots,\mu_s\in\Lambda_\tau\setminus\{0\}}\frac{\prod_{i=1}^sG^{(N;\mu_i,\mu_{i+1})}_{\{2\}^{b_i}}(\tau)-\prod_{i=1}^sG^{(\infty;\mu_i,\mu_{i+1})}_{\{2\}^{b_i}}(\tau)}{\prod_{i=1}^s\mu_i^{a_i}}\,.
\end{align}
Since $x_1\cdots x_s-y_1\cdots y_s=\sum_{i=1}^sx_1\cdots x_{i-1}(x_i-y_i)y_{i+1}\cdots y_s$, we can write the difference as follows.
\begin{align}
G^{(N)}_\kk(\tau)-G_\kk(\tau)=\sum_{i=1}^sD_{N,i}\,,
\end{align}
where
\begin{align}
D_{N,i}\coloneqq\summ{\mu_1,\dots,\mu_s\in\Lambda_\tau\setminus\{0\}}&\left(\prod_{j=1}^{i-1}\frac{G^{(N;\mu_j,\mu_{j+1})}_{\{2\}^{b_j}}(\tau)}{\mu_j^{a_j}}\right)
\left(\prod_{j=i+1}^s\frac{G^{(\infty;\mu_j,\mu_{j+1})}_{\{2\}^{b_j}}(\tau)}{\mu_j^{a_j}}\right)\\
&\times\frac{G^{(N;\mu_i,\mu_{i+1})}_{\{2\}^{b_i}}(\tau)-G^{(\infty;\mu_i,\mu_{i+1})}_{\{2\}^{b_i}}(\tau)}{\mu_i^{a_i}}\,.
\end{align}
By \cref{lem:twotwo}, we have
\begin{align}
\left|D_{N,i}\right|\ll_{C,\kk}\summ{\mu_1,\dots,\mu_s\in\Lambda_\tau\setminus\{0\}}\left(\prod_{j=1}^s\frac{\LL(|\mu_j|)^{m_{i,j}}}{|\mu_j|^{a_j}}\right)\left|G^{(N;\mu_i,\mu_{i+1})}_{\{2\}^{b_i}}(\tau)-G^{(\infty;\mu_i,\mu_{i+1})}_{\{2\}^{b_i}}(\tau)\right|\,,
\end{align}
where $m_{i,j}=\ind_{j\neq i+1}b_{j-1}+\ind_{j\neq i}b_j$ with $b_0\coloneqq0$. Since $\textstyle\sum_{\mu_j\in\Lambda_\tau\setminus\{0\}}\frac{\LL(|\mu_j|)^{m_{i,j}}}{|\mu_j|^{a_j}}$ converges uniformly for $\tau\in C$ for $j\neq i,i+1$, we have
\begin{align}
|D_{N,i}|\ll_{C,\kk}\summ{\mu_i,\mu_{i+1}\in\Lambda_\tau\setminus\{0\}}\frac{\LL(|\mu_i|)^{b_{i-1}}\LL(|\mu_{i+1}|)^{b_{i+1}}}{|\mu_i|^{a_i}|\mu_{i+1}|^{a_{i+1}}}\left|G^{(N;\mu_i,\mu_{i+1})}_{\{2\}^{b_i}}(\tau)-G^{(\infty;\mu_i,\mu_{i+1})}_{\{2\}^{b_i}}(\tau)\right|
\end{align}
for $1\leq i\leq s-1$. And for $i=s$, we have
 \begin{align}
 |D_{N,s}|\ll_{C,\kk}\sum_{\mu_s\in\latzero}\frac{\LL(|\mu_s|)^{b_{s-1}}}{|\mu_s|^{a_s}}\left|G^{(N;\mu_s,0)}_{\{2\}^{b_s}}(\tau)-G^{(\infty;\mu_s,0)}_{\{2\}^{b_s}}(\tau)\right|\,.
 \end{align}
By \cref{lem:key estim2}, there exists a real number $M\geq0$ such that for any $i\in\{1,\dots,s\}$
\begin{align}
|D_{N,i}|\ll_{C,\kk}\frac{\LL(N)^M}{N^2}\,.
\end{align}
Therefore, we have $\left|G^{(N)}_\kk(\tau)-G_\kk(\tau)\right|\leq\sum_{i=1}^s|D_{N,i}|\ll_{C,\kk}\frac{\LL(N)^M}{N^2}$.
\qed

\smallskip
\begingroup
\small
\vspace{0.5cm}
\noindent {\bf AI \& computational resource disclosure:} The results, their formulation, and the proof strategy are the authors' own.  ChatGPT~5.6 and Claude Fable~5 were used as research assistants to check computations (via SageMath) and develop intermediate proof steps and the language. The authors verified all statements, proofs, and references and take full responsibility for them.
\par
\endgroup


\begin{thebibliography}{99}

\bibitem[Ba1]{Ba1} H. Bachmann, \textit{Multiple Zeta-Werte und die Verbindung zu Modulformen durch Multiple Eisensteinreihen}, Master thesis, Universit\"at Hamburg, 2012.

\bibitem[BKM]{BKM} H. Bachmann, H. Kanno and T. Maesaka, \textit{Relations and derivatives of multiple Eisenstein series}, preprint, arXiv:2602.08176, 2026.

\bibitem[BIM]{BIM} H. Bachmann and J.-W. van Ittersum, \textit{Formal multiple Eisenstein series and their derivations} (with Appendix by N. Matthes), Adv. Math. \textbf{487} (2026), Paper No. 110739.

\bibitem[BK1]{BK1} H. Bachmann and U. K\"uhn, \textit{The algebra of generating functions for multiple divisor sums and applications to multiple zeta values}, Ramanujan J. \textbf{40} (2016), 605--648.

\bibitem[BK2]{BK2} H. Bachmann and U. K\"uhn, \textit{A dimension conjecture for $q$-analogues of multiple zeta values}, in \textit{Periods in quantum field theory and arithmetic}, Springer Proc. Math. Stat. \textbf{314}, Springer, Cham, 2020, 237--258.

\bibitem[BT]{BT} H. Bachmann and K. Tasaka, \textit{The double shuffle relations for multiple Eisenstein series}, Nagoya Math. J. \textbf{230} (2018), 180--212.

\bibitem[DS]{DS} F.~I. Diamond and J. Shurman, \textit{A first course in modular forms}, Graduate Texts in Mathematics, 228, Springer, New York, 2005.

\bibitem[GKZ]{GKZ} H. Gangl, M. Kaneko and D. Zagier, \textit{Double zeta values and modular forms}, in \textit{Automorphic forms and zeta functions}, World Scientific, Singapore, 2006, 71--106.

\bibitem[HST]{HST} T. Hara, K. Sakugawa and K. Tasaka, \textit{Symmetric multiple Eisenstein series}, preprint, arXiv:2601.13626, 2026.

\bibitem[O]{O} A. Okounkov, \textit{Hilbert schemes and multiple $q$-zeta values}, Funct. Anal. Appl. \textbf{48} (2014), 138--144.

\bibitem[Za]{Za} D. Zagier, \textit{Elliptic modular forms and their applications}, in \textit{The 1-2-3 of modular forms}, Universitext, Springer, Berlin, 2008, 1--103.

\end{thebibliography}
\end{document}